\documentclass[11pt,leqno]{imsart}
\usepackage[english]{babel}
\usepackage[margin=3.2cm]{geometry}                 
\RequirePackage{amsmath,amsthm,amsfonts,amssymb}
\RequirePackage[authoryear]{natbib}                 
\RequirePackage[colorlinks,linkcolor=blue,citecolor=blue,urlcolor=blue]{hyperref}
\allowdisplaybreaks[4]
\usepackage[scr=rsfso,cal=boondox,frak=euler,bb=ams]{mathalfa}  
\newcommand{\overbar}[1]{\mkern 1.5mu\overline{\mkern-1.5mu#1\mkern-1.5mu}\mkern 1.5mu}   
\numberwithin{equation}{section}                                
\swapnumbers   
                                                 
\theoremstyle{plain}
\newtheorem{theorem}[equation]{Theorem}
\newtheorem{lemma}[equation]{Lemma}
\newtheorem{proposition}[equation]{Proposition}
\theoremstyle{definition}                                       

\newtheorem{rems}[equation]{Remarks}
\newtheorem{rem57}[equation]{Remark to Proposition \ref{prop5-7}}
\def\thickhline{\noalign{\hrule height.8pt}}

\usepackage{fancyhdr}
\usepackage{xurl}    

\DeclareRobustCommand{\bigO}{%
  \text{\usefont{OMS}{cmsy}{m}{n}O}%
}

\makeatletter
\renewenvironment{proof}[1][\proofname]{\par
\pushQED{\qed}%
\normalfont \topsep6\p@\@plus6\p@\relax
\trivlist
\item\relax
{\bfseries 
#1\@addpunct{:}}\hspace\labelsep\ignorespaces 
}{%
\popQED\endtrivlist\@endpefalse
}

\begin{document}
\renewcommand{\theenumi}{\alph{enumi}}  
\renewcommand{\labelenumi}{(\theenumi)}

\begin{frontmatter}
\title{Edgeworth Expansions for Linear Rank Statistics - Consolidated Version}

\begin{aug}
\author[A]{\fnms{Walter}~\snm{Schneller}\ead[label=e1]{walter.schneller@thws.de}}
\footnote[2]{
This is a consolidated version of the article {''Edgeworth Expansions for Linear Rank Statistics''},
published by \href{https://imstat.org/}{The Institute of Mathematical Statistics} in 
\href{https://imstat.org/aos/}{\textit{The Annals of Statistics}},
\href{https://doi.org/10.1214/aos/1176347258}{1989, 1103$-$1123}.
It contains the corrected definition of the distribution of $\underline{I}$ in Section 3, 
as described in the {''Erratum: Edgeworth Expansions for Linear Rank Statistics''}
(published in \href{https://imstat.org/aos/}{\textit{AOS}},
\href{https://doi.org/10.1214/25-AOS2611}{2026, 1649$-$1653}). 
In addition, some minor changes have been made, as listed in the
\href{https://doi.org/10.1214/25-AOS2611SUPPA}{Supplement A} to the 
\href{https://doi.org/10.1214/25-AOS2611}{Erratum}. The only differences between this paper and  
\href{https://doi.org/10.1214/25-AOS2611SUPPB}{Supplement B} 
to the \href{https://doi.org/10.1214/25-AOS2611}{Erratum} are this changed footnote 
and references to the Erratum and its Supplements.} 
\address[A]{\href{https://www.thws.de/en/}{Technische Hochschule W{\"u}rzburg-Schweinfurt},
\href{https://www.thws.de/en/about-thws/faculties/applied-natural-sciences-and-humanities/}
{Fakult{\"a}t Angewandte Natur- und Geisteswissenschaften},
M{\"u}nzstrasse 12, 97070 W{\"u}rzburg\printead[presep={,\ }]{e1}}
\end{aug}

\begin{abstract}
An Edgeworth expansion of first order is established for general linear rank statistics under the null hypothesis 
with a remainder term that is usually of order $n^{-1}$. Furthermore, corresponding results for the second order 
are formulated, but not proved here. The proof for the first order is based on Stein's method and on an extension 
of the combinatorial method of Bolthausen. It is also shown that conditions of van Zwet imply up to a small 
factor our conditions for the validity of Edgeworth expansions.   Moreover, our proof for the  first order also 
provides us with a result about Edgeworth expansions for smooth functions.
\end{abstract}

\begin{keyword}[class=MSC]
\kwd[Primary ]{60F05}
\kwd{62E20}
\end{keyword}

\begin{keyword}
\kwd{Edgeworth expansion}
\kwd{Stein's method}
\kwd{linear rank statistic}
\kwd{combinatorial method of Bolthausen}
\kwd{conditions of van Zwet}
\end{keyword}

\end{frontmatter}

\section{Introduction} \label{sec1}
Let $A = (a_{ij})$ be an $n\,{\times}\,n$-matrix of real numbers ($n \geq 2$). Let
\[\mu_{A} = \sum_{i, j} a_{ij}/n,\hspace{20pt}
\sigma_{\!A}^{2} = \sum_{i, j} \breve{a}_{ij}^{2}/(n-1),\hspace*{20pt}
\hat{a}_{ij} = \breve{a}_{ij}/\sigma_{\!A},\]
where
\[\breve{a}_{ij} = a_{ij} - n^{-1}\sum_{l} a_{il} - n^{-1}\sum_{k} a_{kj} + n^{-2}\sum_{k,l} a_{kl}.\]
Furthermore, let
\[\beta_{\!A} = \sum_{i,j} |\hat{a}_{ij}|^{3}\hspace*{10pt}\text{and}
\hspace*{10pt}\delta_{\!A} = \sum_{i,j} |\hat{a}_{ij}|^{4}.\]
If $\pi$ is uniformly distributed on the set $\mathscr{P}_{n}$ of permutations of $\{ 1,\ldots,n \}$, then
asymptotic normality of the linear rank statistic
\[\mathscr{T}_{\!A} = \Bigl( \sum_{i} a_{i\pi(i)} - \mu_{A} \Bigr)/\sigma_{\!A} = \sum_{i} \hat{a}_{i\pi(i)}\]
has been proved under various conditions by \citet{Hoeffding1951}, \citet{Motoo1957} and
others [see also \citet{Schneller1988}]. Results on the rate of convergence have been
obtained, e.g., by \citet{Does1982} (for the case $a_{ij} = e_{i}d_{j}$), \citet{HoChen1978} and
most successfully by \citet{Bolthausen1984}. He proved the existence of an absolute
constant $K > 0$ such that
\begin{equation}\label{eq1-1}
\sup_{z \in \mathbb{R}} \big| \mathscr{F}_{\!A}(z) - \Phi(z) \big| \leq K\beta_{\!A}/n,
\end{equation}
where $\mathscr{F}_{\!A}$ is the distribution function of $\mathscr{T}_{\!A}$ and $\Phi$ is the standard normal
distribution function.

The purpose of this paper is to establish Edgeworth expansions of first order for $\mathscr{F}_{\!A}$ and
$E(q(\mathscr{T}_{\!A}))$ where $q: \mathbb{R} \rightarrow \mathbb{R}$ is sufficiently smooth. To be more precise,
let us consider for a moment $E(q(\mathscr{T}_{\!A}))$ with $q$ only bounded. In order to establish Edgeworth
expansions for this term we have to assume some smoothness of the function $q$ or of the distribution of 
$\mathscr{T}_{\!A}$.

In Theorem \ref{th2-1} we assume that $q$ has in addition a bounded first and second derivative.
But we impose no smoothness condition on the distribution of $\mathscr{T}_{\!A}$.

In Theorem \ref{th2-4} we consider $\mathscr{F}_{\!A}$ (i.e., the functions $q = 1_{(-\infty,\,z]}$, $z \in \mathbb{R}$) and
assume in principle that the second differences of $\mathscr{F}_{\!A}$ fulfill a boundedness condition. In both
theorems the remainder term is of order $\delta_{\!A}/n$. We note that in very many cases $\delta_{\!A}/n$ is of
order $n^{-1}$.

Unfortunately the condition (\ref{eq2-5}) of Theorem \ref{th2-4} is not very practicable 
though it is very natural and almost necessary [see Remark \ref{rems2-11}(a)]. 
But for the case $a_{ij} = e_{i}d_{j}$ we are able to verify this condition
under the practicable conditions of \citet{vanZwet1982} [see Theorem \ref{th2-12}(a)].

Furthermore, for the case of the distribution function $\mathscr{F}_{\!A}$ we formulate corresponding results for
Edgeworth expansions of second order [see Theorem \ref{th2-7} and Theorem \ref{th2-12}(b)]. 
But we do not prove these results and refer the reader for complete proofs 
to the thesis of the author [see \citet{Schneller1987}, or the English translation \citet{Schneller2025ThesisV2}
of an updated version]. In addition, some remarks
on these proofs may be found at the end of this paper (see Section \ref{sec7}). We note that the result of
Theorem \ref{th2-12}(b) contains the result of \citet{Does1983} [see Remark \ref{rems2-18}(c)] and in the two-sample case
(i.e., $a_{ij} = e_{i}d_{j}$ with $e_{1} = \ldots = e_{m} = 0$, $e_{m+1} = \ldots = e_{n} = 1$) this theorem is
comparable (up to a factor $n^{\epsilon}$) to the results of \citet{BivanZw1978} and \citet{Rob1978}.

Section \ref{sec2} contains our results. In Section \ref{sec3} we introduce our two main methods,
namely the method of \citet{stein1972bound} and an extension of the combinatorial method
of \citet{Bolthausen1984}. Using these two methods we prove the basic equation (\ref{eq3-12}).
This equation gives a kind of Edgeworth expansion for $E(q(\mathscr{T}_{\!A}))$ where $q$ is
differentiable and bounded. From this equation we deduce Theorem
\ref{th2-1} in Section \ref{sec4} and Theorem \ref{th2-4} in Section \ref{sec5}. 
In Section \ref{sec5} we have to replace the functions
$1_{(-\infty,\,z]}$ by convenient smooth functions (see Lemma \ref{lem5-2}). We note that the main
difference between these two sections is, roughly speaking, the different treatment of the 
two terms $|x||q(x + y) - q(x)|$ and $|q'(x+y) - q'(x)|$. In Section \ref{sec4} we simply apply the mean value theorem 
to both terms [see (\ref{eq4-8a})], while in Section \ref{sec5} 
we need a result like (\ref{eq1-1}) (see Proposition \ref{prop5-7}) for the first 
and condition (\ref{eq2-5}) for the second of these two terms (cf. Proof of Lemma \ref{lem5-3}).

The straightforward use of the mean value theorem in (\ref{eq4-8a}) reveals that the result of Theorem \ref{th2-1} 
is surely not optimal. We have not tried to improve it, since the emphasis of this
paper lies more on Edgeworth expansions for $\mathscr{F}_{\!A}$.

In Section \ref{sec6} we establish the condition (\ref{eq2-5}) of Theorem \ref{th2-4} using a result of \citet{vanZwet1982}. 
Under the conditions (\ref{eq2-13})$-$(\ref{eq2-15}), this result gives an estimate of the characteristic
function of $\mathscr{T}_{\!A}$ for arguments $t$ with $c_{1} \log n \leq |t| \leq c_{2} n^{3/2}$ 
[cf. also (\ref{eq6-6})]. 
From this estimate we deduce (\ref{eq2-5}) essentially with the help of Lemma \ref{lem6-3}.

\section{The results} \label{sec2}
First, we need some notation. Given $F : \mathbb{R} \rightarrow \mathbb{R}$ and $y \in \mathbb{R}$, we define
\begin{equation*}
||F|| = \sup \{|F(z)| : z \in \mathbb{R}\}
\end{equation*}
and the second difference of $F$ related to $y$ by
\begin{equation*}
\Delta^{2}_{y} F(z) = [ F(z+2y) - F(z+y)] - [ F(z+y) - F(z) ],\ \ \ \ z \in \mathbb{R}.
\end{equation*}
The interpolating polynomial to $F$ of degree 2 at the points $z$, $z+y$ and $z + 2y$ is
\begin{equation*}
\begin{array}{l@{\hspace*{0.8ex}}c@{\hspace*{0.8ex}}l}
P^{2}_{y}(x;z,F)&=&F(z) + [F(z+y) - F(z)](x-z)y^{-1}\\[1.5ex]
&&+\, \Delta^{2}_{y} F(z)\frac{1}{2}(x - z - y)(x - z)y^{-2},\ \ \ \ x, z \in \mathbb{R}.
\end{array}
\end{equation*}
For the matrix $A$ and $l \in \mathbb{N}_{0}$ we define $\hat{A} = ( \hat{a}_{ij} )$ and
\begin{equation*}
\begin{array}{r@{\hspace*{0.8ex}}c@{\hspace*{0.8ex}}l}
F_{\!A}&=&\text{distribution function of } \displaystyle{T_{\!A} = \sum_{i} a_{i\pi(i)}},\\[2.1ex]
M(l,A)&=&\text{set of all $(n-l)\,{\times}\,(n-l)$ matrices, which can be obtained}\\[0.8ex]
&&\text{from $A$ by cancelling $l$ rows and $l$ columns,}\\[1.7ex]
N(l,A)&=&\bigcup \big\{ M(r,A): 0 \leq r \leq \min\{l, n - 1\}\big\},\\[1.5ex]
D_{\!A}&=&(\delta_{\!A}/n)^{1/2},\ \ \ \  
E_{\!A} = \displaystyle{\Bigl( \sum\nolimits_{i,j} |\hat{a}_{ij}|^{5}/n \Bigr)^{1/3}}.
\end{array}
\end{equation*}
We notice that using $\displaystyle{\sum\nolimits_{i,j} \hat{a}_{ij}^{2} = n-1}$ and H{\"o}lder's inequality we get
\begin{equation*}
\beta_{A}/n \leq D_{\!A} \leq E_{\!A}
\ \ \ \ \text{and}\ \ \ \
\sum\nolimits_{i,j} |\hat{a}_{ij}|^{k}/n \geq 2^{-(k/2)} n^{1-(k/2)}\ \  \text{for}\ k > 2
\end{equation*}
[cf. \citet{Schneller2025ThesisV2}, Lemma 3.1.18(a), (b)]. Finally, the expansions are
\[\begin{array}{r@{\hspace*{0.8ex}}c@{\hspace*{0.8ex}}l@{\hspace*{0.8ex}}c@{\hspace*{0.8ex}}l}
e_{1,A}(x)&=&\Phi(x) - \psi(x)\, \dfrac{1}{6}\, \lambda_{1,A}\,(x^2-1),\\[2ex]
e_{2,A}(x)&=&\Phi(x) - \psi(x)\, \biggl\{\, \dfrac{1}{6}\, \lambda_{1,A}\,(x^2-1)&+&   
\dfrac{1}{24}\, \lambda_{2,A}\,(x^3-3x)\\[2ex]
&&&+&\dfrac{1}{72}\,\lambda_{1,A}^{2}\,(x^5-10x^3 + 15x)\,\biggr\},
\end{array}
\]
\begin{tabular}{@{}l@{\hspace*{3.8ex}}l@{\hspace*{2.8ex}}l@{}}
where&$\psi = \Phi'$,&
$\displaystyle{\lambda_{1,A} = n^{-1} \sum_{i,j} \hat{a}^{3}_{ij}}$\hspace*{4ex}and\\[3.2ex]
&&$\displaystyle{\lambda_{2,A} = n^{-1} \sum_{i,j} \hat{a}_{ij}^{4} + 3n^{-1} -  
3n^{-2} \sum_{i,j,k}\, \Bigl( \hat{a}_{ij}^{2} \hat{a}_{ik}^{2} + 
\hat{a}_{ij}^{2} \hat{a}_{kj}^{2} \Bigr)}$.
\end{tabular}\\[0.2ex]

Here is our first result:
\begin{theorem}\label{th2-1}
There exist positive absolute constants $K_{1}$, $K_{2}$ and $K_{3}$ such that 
for all $A$ satisfying $\sigma_{\!A} > 0$ and all
\begin{equation} \label{eq2-2}
q \in \mathscr{D} = \bigl\{ g : \mathbb{R} \rightarrow \mathbb{R} : 
g\, \text{is}\,\, \text{twice}\,\, \text{differentiable}\,\, \text{and}\,\, 
g,\, g', g''\, \text{are}\,\, \text{bounded}\, \bigr\},
\end{equation}
we have
\begin{equation}\label{eq2-3}
\bigg| E(q(\mathscr{T}_{\!A})) - \int\nolimits_{\mathbb{R}} q e'_{1,A}\, dx \bigg| \leq 
\bigl( K_{1}||q|| + K_{2}||q'|| +  K_{3}||q''|| \bigr) D^{2}_{\!A}.\vspace*{0.5ex}
\end{equation}
\end{theorem}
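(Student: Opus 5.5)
We use Stein's method for the normal distribution together with an exchangeable-pair construction for $\mathscr{T}_{\!A}$ in the spirit of \citet{Bolthausen1984}. Write $W=\mathscr{T}_{\!A}$ and let $f=f_q$ be the bounded solution of the Stein equation
\[
f'(x)-xf(x)=q(x)-\Phi(q),\qquad \Phi(q)=\int q\,d\Phi .
\]
For $q\in\mathscr{D}$ the standard estimates give $\|f\|,\|f'\|\le K\|q\|$ and $\|f''\|\le K(\|q\|+\|q'\|)$. Moreover $\|f'''\|\le K(\|q\|+\|q'\|+\|q''\|)$ and $\|xf''(x)\|\le K(\|q\|+\|q'\|+\|q''\|)$. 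These bounds explain why all three norms of $q$ appear in (\ref{eq2-3}).

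The task is therefore to expand $E(f'(W)-Wf(W))$ up to an error $O(D_A^2)$.

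\textbf{Exchangeable pair.} Let $(I,J)$ be a uniformly chosen pair of distinct indices, independent of $\pi$, and set $\pi'=\pi\circ(I\,J)$ and $W'=\sum_i\hat a_{i\pi'(i)}$. Then
\[
W'-W=\hat a_{I\pi(J)}+\hat a_{J\pi(I)}-\hat a_{I\pi(I)}-\hat a_{J\pi(J)},
\]
and, because $\hat A$ is doubly centred, $E(W'-W\mid\pi)=-\frac{2}{n-1}W$. Antisymmetry then gives the identity
\[
E\,Wf(W)=\frac{n-1}{4}\,E\big[(W'-W)(f(W')-f(W))\big].
\]
Taylor-expanding to third order yields $E\,Wf(W)=E[f'(W)S]+\tfrac12E[f''(W)R]+\mathrm{Rem}$. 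Here
\[
S=\frac{n-1}{4}E((W'-W)^2\mid\pi),\qquad R=\frac{n-1}{4}E((W'-W)^3\mid\pi),
\]
and $|\mathrm{Rem}|\le K\|f'''\|\,n\,E|W'-W|^4\le K\|f'''\|\delta_A/n$. A direct moment computation gives $ES=1$ and $\operatorname{Var}(S)\le K\delta_A/n$. It also gives $R=c\lambda_{1,A}W+\text{(centred fluctuation)}$ with $E\,R^{\,2}$-type errors controlled by $\delta_A/n$.

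\textbf{Main obstacle: decoupling the correction terms.} The error term $E[f'(W)(1-S)]$ cannot simply be bounded by Cauchy--Schwarz, since that only gives $D_A$ and we need $D_A^2$. The idea is to write $1-S$ as a sum of local terms $\sum_{i,j}g_{ij}(\pi(i),\pi(j))$ and to condition on small sets of indices, following Bolthausen's combinatorial device. Concretely, one replaces $W$ by $W$ minus the contributions of the rows $i,j$; the resulting statistic has the law of $\mathscr{T}_B$ for some $B\in N(4,A)$, suitably renormalised. Because the local terms are centred, their correlation with $f'(W)$ enters only through a first-order Taylor term of $f'$, which is of order $\|f''\|\cdot(\text{local increment})$. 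Summing these contributions gives $O(\delta_A/n)$ provided one checks that $\sigma_B$ and $\hat b_{ij}$ stay comparable to those of $A$ uniformly over $N(4,A)$; this holds when $D_A$ is small, and otherwise the bound is trivial.

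In the same way, $\tfrac12E[f''(W)R]$ reduces to $c\lambda_{1,A}E[Wf''(W)]+O(\|xf''\|+\|f'''\|)\,\delta_A/n$. This is the ``basic equation'' (\ref{eq3-12}): $E q(W)-\Phi(q)=-c\lambda_{1,A}E[Wf''(W)]+O(D_A^2)$.

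\textbf{Replacing $W$ by a normal variable.} Next, $E[Wf''(W)]$ is replaced by its value under $N(0,1)$. The error is at most $K\|(xf'')'\|\,D_A$; this uses the first-order normal approximation at rate $\beta_A/n\le D_A$, applied to the smooth function $xf''$. Since $|\lambda_{1,A}|\le\beta_A/n\le D_A$, the total error after this replacement is $O(D_A^2)$.

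\textbf{Identifying the expansion.} It remains to show that $\Phi(q)-c\lambda_{1,A}\int xf''(x)\,d\Phi(x)=\int q\,e'_{1,A}\,dx$. Integrating by parts against $\psi$ and using the Stein equation turns this into a Hermite-polynomial identity, with $(x^2-1)\psi$ arising as $\psi''$. The constant $c$ is fixed by matching the third cumulant of $W$, which equals $\lambda_{1,A}+O(\lambda_{1,A}/n)$; the $O(\lambda_{1,A}/n)$ discrepancy is absorbed into $D_A^2$.

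Finally, all constants depend only on the Stein bounds, which yields absolute constants $K_1,K_2,K_3$ as claimed.
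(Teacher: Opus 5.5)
\textbf{The skewness correction is placed in the wrong term, and this breaks the argument.} For your pair, exchangeability gives $E(W'-W)^3=0$ and $E[W(W'-W)^3]=-\tfrac12E(W'-W)^4$. Hence $ER=0$ and $E(RW)=-\tfrac{n-1}{8}E(W'-W)^4=O(\delta_{\!A}/n)$. So $R$ has no component $c\lambda_{1,A}W$, and $\tfrac12E[f''(W)R]$ can at best be a remainder.

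Conversely, putting $f(w)=w^2$ into your identity gives $E(W^3)=\tfrac{n-1}{2}E[(W'-W)^2W]=2E(SW)$, i.e.\ $E[(1-S)W]=-\tfrac12E(W^3)$. This is of the size of $\lambda_{1,A}$ (typically $n^{-1/2}$), not $D_{\!A}^2$. So $E[f'(W)(1-S)]$ is not $O(D_{\!A}^2)$: it contains $-\tfrac12E(W^3)E[Wf'(W)]$, which is exactly the correction term in the paper's basic equation (\ref{eq3-12}). In your decoupling paragraph the first-order Taylor terms do not cancel. Summed over the pairs they are of order $\lambda_{1,A}$ and reproduce this correlation; only the second-order terms are of order $\delta_{\!A}/n$.

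Consequently the equation you reach, $Eq(W)-\Phi(q)=-c\lambda_{1,A}E[Wf''(W)]+O(D_{\!A}^2)$, is false, and no choice of $c$ repairs the identification step. By Gaussian integration by parts and the Stein equation, $\Phi(xf''(x))=\Phi(f''')=-\tfrac14\int q(x)(x^4-6x^2+3)\psi(x)\,dx$, a fourth-Hermite functional. The target, however, is $\int q\,e'_{1,A}\,dx-\Phi(q)=\tfrac16\lambda_{1,A}\int q(x)(x^3-3x)\psi(x)\,dx=-\tfrac12\lambda_{1,A}\Phi(xf'(x))$. For odd $q$ your correction vanishes while the true one in general does not. (Your normal-replacement step would also need $(xf'')'=f''+xf'''$ bounded, which $q\in\mathscr{D}$ does not give.)

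What actually has to be shown is that $E[f'(W)(1-S)]-\tfrac12E[f''(W)R]$ equals $-\tfrac12E(W^3)E[Wf'(W)]$ up to $O(D_{\!A}^2)$. This requires a second-order expansion in which the local pieces are exactly independent of the remaining statistic. The paper's construction of $\underline{I}$ on $M_8$ and of $\pi_1,\dots,\pi_4$ (Lemma \ref{lem3-5}) provides this:
\begin{itemize}
\item it yields $E(T_{\!A}f(T_{\!A}))=nE(a_{I_1J_1}f(T_4))$;
\item it allows successive expansions about $T_3,T_2,T_1$ with exact factorisations;
\item after conditioning on $(\underline{I},\underline{J})$, it reduces everything to $T_B+a$ with $B\in N(8,A)$, controlled via (\ref{eq4-7})--(\ref{eq4-10}) under the truncation $|a_{ij}|\le 1$.
\end{itemize}
Your sketch of conditioning on two rows would have to be carried out at that level of detail, keeping the first-order terms instead of discarding them.
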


The next result deals with Edgeworth expansions of first order for the distribution function $\mathscr{F}_{\!A}$.
\begin{theorem}\label{th2-4}
There exist positive absolute constants $K_{4}$ and $K_{5}$ such that 
for all $A$ satisfying $\sigma_{\!A} > 0$ and the condition
\begin{equation} \label{eq2-5}
\begin{array}{c}
\text{there}\,\, \text{exists}\,\, \text{a}\,\, \text{positive}\,\, \text{constant}\,\, C_{1}\,\,
\text{such}\,\, \text{that}\\[1.5ex]
\big|\Delta^{2}_{y} F_{B}(z)\big| \leq C_{1}\bigl( D^{2}_{\!A} + y^{2} \bigr)\\[1.5ex]
\text{for}\,\, \text{all}\,\, z \in \mathbb{R},\, 0 \leq y \leq D_{\!A}\,\, \text{and}\,\, B \in N(8, \hat{A}),
\end{array}
\end{equation}
we have
\begin{equation} \label{eq2-6}
\big|\big| \mathscr{F}_{\!A} - e_{1,A} \big|\big| \leq 
\bigl( K_{4}C_{1} + K_{5} \bigr) D^{2}_{\!A}.\vspace*{0.5ex}
\end{equation}
\end{theorem}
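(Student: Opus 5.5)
The plan is to follow the route the introduction announces. We start from the basic Stein/Bolthausen identity (\ref{eq3-12}), which gives an expansion of $E(q(\mathscr{T}_{\!A}))$ for bounded differentiable $q$. We apply it to smoothed indicator functions and then remove the smoothing using condition (\ref{eq2-5}).

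\textbf{Step 1: smoothing.} Fix $z$ and replace $1_{(-\infty,z]}$ by smooth functions $q_{z,\varepsilon}^{\pm}$ with $\varepsilon = D_{\!A}$. These are twice differentiable, lie between $1_{(-\infty,z-\varepsilon]}$ and $1_{(-\infty,z+\varepsilon]}$, and satisfy $\|q'\|\lesssim \varepsilon^{-1}$, $\|q''\|\lesssim\varepsilon^{-2}$. This is the analogue of Lemma \ref{lem5-2}.

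A naive sandwich argument would cost a first-order error $\varepsilon = D_{\!A}$, which is too large. We therefore use symmetric smoothing, for instance a convolution of the indicator with a symmetric kernel supported in $[-\varepsilon,\varepsilon]$. Then
\[
E q_{z,\varepsilon}(\mathscr{T}_{\!A}) - \mathscr{F}_{\!A}(z)
\]
is expressed through second differences $\Delta^2_y\mathscr{F}_{\!A}$ with $0\le y\le\varepsilon$. Condition (\ref{eq2-5}), with $B=\hat A$, bounds these by $C_1(D_{\!A}^2+y^2)\lesssim C_1D_{\!A}^2$. The analogous smoothing error for $e_{1,A}$ is $O(\varepsilon^2)$, since $e_{1,A}$ has bounded second derivative (note $|\lambda_{1,A}|\le \beta_A/n\le D_A$).

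The general idea is that after interpolating by $P^2_y$, only second differences enter and first-order terms cancel. This reduces the problem to bounding $|E q_{z,\varepsilon}(\mathscr{T}_{\!A}) - \int q_{z,\varepsilon} e_{1,A}'\,dx|$ by $(K_4C_1+K_5)D_{\!A}^2$.

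\textbf{Step 2: the remainder in (\ref{eq3-12}).} Applying the Theorem \ref{th2-1} bound directly would give $\|q''\|D_A^2\sim 1$, which is useless. Instead we go back to the remainder terms of (\ref{eq3-12}) and treat them as in the proof of Lemma \ref{lem5-3}. These remainders involve expectations over reduced matrices $B\in N(8,\hat A)$, coming from Bolthausen's exchange construction with the random indices $\underline I$, of two kinds of terms:
\[
|x|\,|q(x+y)-q(x)| \quad\text{and}\quad |q'(x+y)-q'(x)|,
\]
where the shifts $y$ are built from a few entries $\hat a_{ij}$.

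For the first kind, $q(x+y)-q(x)$ is nonzero only when $x$ lies in an interval of length $|y|+2\varepsilon$. We bound its probability under the distribution of $T_B$ (suitably standardized) with a Berry--Esseen type estimate as in (\ref{eq1-1}), namely Proposition \ref{prop5-7} applied to the submatrices. This gives $O(|y|+\varepsilon+\beta/n)$. Multiplied by the weights $|\hat a|^3$ etc., and summed, this yields $O(D_A^2)$ via Hölder and $\beta_A/n\le D_A$.

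For the second kind, $q'$ is a kernel of height $\varepsilon^{-1}$. Here we write $E[q'(S+y)-q'(S)]$, with $S$ distributed as $T_B$ up to centering, as an integral of $q''$ against differences of $F_B$. For $|y|\le\varepsilon$ we use summation by parts to express this through second differences $\Delta^2_{u}F_B$ with $u\le D_A$. Condition (\ref{eq2-5}) then gives
\[
\varepsilon^{-2}\,C_1\,(D_A^2+y^2)\,\min(1,|y|/\varepsilon)
\]
times weights, and the total is $\lesssim C_1 D_A^2$ after summing $|\hat a|^3|y|$-type weights against $\delta_A/n$. For $|y|>\varepsilon$ we simply use $|q'|\le\varepsilon^{-1}$ together with Markov/Chebyshev on the entries: $\sum|\hat a|^4/n=D_A^2$.

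\textbf{Main obstacle.} I expect the hardest step to be the second-kind term. We must show that the dependence between the shift $y$ (the exchanged entries) and the reduced statistic $T_B$ is controlled by conditioning on $\underline I$, so that $T_B$ is genuinely the rank statistic of a matrix in $N(8,\hat A)$. We must also check that the rescaling $\sigma_B/\sigma_A=1+O(\cdot)$ and the recentering do not destroy the second-difference bound. I would try first to apply (\ref{eq2-5}) at the unnormalized level $F_B$ with $B$ a submatrix of $\hat A$, exactly as the condition is phrased, so that no renormalization is needed. Combining Steps 1 and 2 and taking the supremum over $z$ gives (\ref{eq2-6}).
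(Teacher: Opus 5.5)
Your Step 1 is essentially the paper's Lemma~\ref{lem5-2}: the paper's $q_{z-D_{\!A}}$ is exactly $1_{(-\infty,z]}$ smoothed with the triangular kernel on $[-D_{\!A},D_{\!A}]$. Your treatment of $|x|\,|q(x+y)-q(x)|$ via Proposition~\ref{prop5-7} for the submatrices matches Lemma~\ref{lem5-3}.

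The genuine gap is that you only estimate $R(q_z)$. The left-hand side of (\ref{eq3-12}) is $E(q_z(T_{\!A}))-\Phi(q_z)+\frac12E(T_{\!A}^3)\,E(T_{\!A}f_z'(T_{\!A}))$, not $E(q_z(T_{\!A}))-\int q_z e_{1,A}'\,dx$. To pass from one to the other you must show that $\frac12E(T_{\!A}^3)E(T_{\!A}f_z'(T_{\!A}))$ equals $\frac12\lambda_{1,A}\Phi(xf_z'(x))=\frac16\lambda_{1,A}\int q_z(x)(3x-x^3)\psi(x)\,dx$ up to $\bigO(D_{\!A}^2)$, uniformly in $z$.

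The essential ingredient is the uniform estimate (\ref{eq5-6}), $\sup_z|E(T_{\!A}f_z'(T_{\!A}))-\Phi(xf_z'(x))|\le c\,\beta_{\!A}/n$. It does not follow from the smooth-function argument. The bound (\ref{eq4-12}) carries the factor $\|q_z'\|=D_{\!A}^{-1}$, so after multiplying by $|\lambda_{1,A}|\le\beta_{\!A}/n\le D_{\!A}$ you are left with an error of order $(\beta_{\!A}/n)^2/D_{\!A}$. That can be as large as $D_{\!A}$, not $D_{\!A}^2$. The paper supplies the missing step in Lemma~\ref{lem5-5}. It writes $xf_z'(x)=x\{q_z(x)-(\Phi(q_z)-xf_z(x))\}$ and uses that $q_z$ and $\Phi(q_z)-xf_z$ are both monotone between $0$ and $1$. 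This reduces the problem to the Berry--Esseen-type bound of Proposition~\ref{prop5-7} for $E(|\mathscr{T}_{\!A}|1_{(-\infty,z]}(\mathscr{T}_{\!A}))$, applied to $A$ itself. You need to add this step; without it the argument only reaches order $D_{\!A}$.

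Your handling of the $q'(x+y)-q'(x)$ term also needs repair. For $|y|\le\varepsilon$, your bound $\varepsilon^{-2}C_1(D_{\!A}^2+y^2)\min(1,|y|/\varepsilon)$ is of size $C_1|y|/D_{\!A}$. Against the weights $nE(|a_{I_1J_1}||\Delta T_3||\Delta T_2|(|\Delta T_1|+|\Delta T_2|))\le cD_{\!A}^2$ it gives only $C_1D_{\!A}$.

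For $|y|>\varepsilon$, the crude bound $|q'|\le\varepsilon^{-1}$ combined with Markov fails outright. The shifts $y$ (such as $\Delta T_1+t\Delta T_2$) are typically of the same order as $D_{\!A}$. So this part contributes about $nE(|a_{I_1J_1}||\Delta T_3||\Delta T_2|)/D_{\!A}\asymp(\beta_{\!A}/n)/D_{\!A}$, which is of order one when $|\hat a_{ij}|\asymp n^{-1/2}$. Using $1_{\{|y|>D_{\!A}\}}\le|y|^k/D_{\!A}^k$ only makes things worse.

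No case distinction is needed. By (\ref{eq5-1}), for every $y$,
$q_z'(x+y)-q_z'(x)=yD_{\!A}^{-2}\int_0^1(1_{(z+D_{\!A},z+2D_{\!A}]}-1_{(z,z+D_{\!A}]})(x+sy)\,ds$.
Conditionally on $\underline{I},\underline{J}$, the expectation of the integrand at $x=T_B+a$ is exactly $\Delta^2_{D_{\!A}}F_B(z-a-sy)$. Hence (\ref{eq2-5}), used only with step $D_{\!A}$, gives $|E(q_z'(T_B+a+y)-q_z'(T_B+a))|\le 2C_1|y|$ for all $y$. This is what Lemma~\ref{lem5-3} uses.
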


The corresponding result of this theorem for the second order is
\begin{theorem}\label{th2-7}
There exist positive absolute constants $K_{6}$, $K_{7}$ and $K_{8}$ such that 
for all $A$ satisfying $\sigma_{\!A} > 0$ and the conditions
\begin{equation} \label{eq2-8}
\begin{array}{c}
\text{there}\,\, \text{exists}\,\, \text{a}\,\, \text{positive}\,\, \text{constant}\,\, C_{2}\,\,
\text{such}\,\, \text{that}\\[1.5ex]
\big| F_{B}(x) - P^{2}_{E_{\!A}}(x;z,F_{B}) \big| \leq C_{2}\bigl( E^{3}_{\!A} + (x-z)^{3} \bigr)\\[1.5ex]
\text{for}\,\, \text{all}\,\, z \in \mathbb{R},\, z \leq x \leq z + 3 E_{\!A}\,\, \text{and}\,\, B \in N(16, \hat{A}),
\end{array}
\end{equation}
\begin{equation} \label{eq2-9}
\begin{array}{c}
\text{there}\,\, \text{exists}\,\, \text{a}\,\, \text{positive}\,\, \text{constant}\,\, C_{3}\,\,
\text{such}\,\, \text{that}\\[1.5ex]
\bigl( |z| + 1 \bigr) \big| \Delta^{2}_{y} F_{B}(z) \big| \leq C_{3}\bigl( E^{2}_{\!A} + y^{2} \bigr)\\[1.5ex]
\text{for}\,\, \text{all}\,\, z \in \mathbb{R},\, 0 \leq y \leq E_{\!A}\,\, \text{and}\,\, B \in N(16, \hat{A}),
\end{array}
\end{equation}
we have
\begin{equation} \label{eq2-10}
\big|\big| \mathscr{F}_{\!A} - e_{2,A} \big|\big| \leq 
\bigl( K_{6}C_{2} + K_{7}C_{3} + K_{8} \bigr) E^{3}_{\!A}.\vspace*{0.5ex}
\end{equation}
\end{theorem}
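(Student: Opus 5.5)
The plan is to extend the first-order argument behind Theorem \ref{th2-4} one step further in Stein's method together with Bolthausen's combinatorial method. Throughout, $D_{\!A}$ is replaced by $E_{\!A}$ and the expansion $e_{1,A}$ by $e_{2,A}$.

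\textbf{Step 1: a second-order Stein identity.} For a bounded, smooth $q$, let $f_q$ solve the Stein equation $f'(x)-xf(x)=q(x)-\int q\,d\Phi$. We write $E(q(\mathscr{T}_{\!A}))-\int q\,d\Phi = E\bigl(f_q'(\mathscr{T}_{\!A})-\mathscr{T}_{\!A}f_q(\mathscr{T}_{\!A})\bigr)$. For the term $E(\mathscr{T}_{\!A}f_q(\mathscr{T}_{\!A}))$ we use the exchangeable-pair / random-transposition construction of Section \ref{sec3}. We first condition on the images of a few indices, which turns $\mathscr{T}_{\!A}$ into a statistic $\mathscr{T}_B$ plus a small perturbation, with $B$ in some $M(l,\hat A)$. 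We then Taylor expand $f_q$ to third order instead of second order, as in (\ref{eq3-12}).

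The cumulant-type coefficients that appear are identified with $\lambda_{1,A}$ and $\lambda_{2,A}$. The $n^{-2}\sum(\hat a_{ij}^2\hat a_{ik}^2+\hat a_{ij}^2\hat a_{kj}^2)$ and $3n^{-1}$ corrections come from the dependence created by sampling without replacement. Error terms involve $\sum|\hat a_{ij}|^5/n=E_{\!A}^3$, together with products such as $\lambda_{1,A}\cdot\beta_A/n\le E_{\!A}^3$. Wherever $\lambda_{1,A}$ appears multiplied by an unknown quantity $E(g(\mathscr{T}_B))$, we substitute the first-order expansion from Theorem \ref{th2-4} (or its smooth version) applied to $B$. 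This produces the $\lambda_{1,A}^2$ term of $e_{2,A}$. Iterating over nested submatrices explains the need for $N(16,\hat A)$ rather than $N(8,\hat A)$.

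\textbf{Step 2: smoothing.} As in Lemma \ref{lem5-2}, we replace $1_{(-\infty,z]}$ by smooth functions $q_{z,\varepsilon}$ with $\varepsilon\asymp E_{\!A}$. These must agree with the indicator outside an interval of length $O(E_{\!A})$ and have derivative bounds $\|q^{(k)}\|\lesssim E_{\!A}^{-k}$. The resulting remainders then look like
\[E\bigl(|x|\,|f(x+y)-f(x)|\bigr),\qquad E\bigl(|f''(x+y)-f''(x)|\bigr)\]
evaluated at $x=\mathscr{T}_B$. Such terms cannot be controlled by the mean value theorem alone.

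\textbf{Step 3: local regularity of $F_B$.} This is where conditions (\ref{eq2-8}) and (\ref{eq2-9}) enter, replacing the role (\ref{eq2-5}) played in Lemma \ref{lem5-3}. For the first-type terms, weighted by $|x|$, we use (\ref{eq2-9}); its factor $(|z|+1)$ exactly absorbs the weight $x$ multiplying second differences. For the second-type terms we integrate by parts against $F_B$ and replace $F_B$ locally by the quadratic interpolant $P^2_{E_{\!A}}(\cdot;z,F_B)$. Condition (\ref{eq2-8}) bounds the cubic error by $C_2(E_{\!A}^3+(x-z)^3)$, and the polynomial part integrates exactly against the smooth kernel with cancellations. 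Finally, the smoothing error from Step 2 must be removed at the level of $E_{\!A}^3$. Here the interpolation condition is again used, now for $B=\hat A$: the difference between $F$ and its smoothed version is a second-difference quantity, which is expressed via $P^2$ up to $O(C_2E_{\!A}^3)$, while the polynomial part is compared with the same smoothing applied to $e_{2,A}$.

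\textbf{Main obstacle.} I expect the hardest part to be Step 1's bookkeeping. The coefficients of the third-order Taylor terms must combine exactly into $\lambda_{2,A}$ and $\lambda_{1,A}^2/72$. Every other term must be bounded by $E_{\!A}^3$, and this includes the terms that come from conditioning on up to $16$ rows and columns and renormalising $B$ (with $\sigma_B$ versus $\sigma_A$ differences of order $1/n\lesssim E_{\!A}^3$, using $\sum|\hat a|^5/n\ge 2^{-5/2}n^{-3/2}$). Bolthausen's inductive argument would then be used to close a recursive inequality for $\sup_B\|\mathscr{F}_B-e_{2,B}\|$ over $B\in N(16,\hat A)$, just as Proposition \ref{prop5-7} handles the first order.
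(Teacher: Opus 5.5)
\paragraph{Main gap: the Section \ref{sec3} construction cannot carry a third-order expansion.} If you push the proof of Lemma \ref{lem3-11} one order further, most new terms still factor via Lemma \ref{lem3-5}(b)--(d) with errors of order $E_A^3$. One term does not: $nE\bigl(a_{I_1J_1}\Delta T_3\,\Delta T_2\,\Delta T_1\,f'''(T_1)\bigr)$.
\begin{itemize}
\item Its main part $nE(a_{I_1J_1}\Delta T_3\Delta T_2\Delta T_1)\,E(f'''(\cdot))$ is a genuine contribution to the coefficient of $\lambda_{2,A}$.
\item But $\Delta T_1\in\sigma(I_1,\dots,I_8,J_1,\dots,J_8)$, and no permutation of the construction is independent of all these variables. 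Lemma \ref{lem3-5}(b) only makes $\pi_1$ independent of $(I_1,\dots,I_4,J_1,\dots,J_4)$, and $J_5,\dots,J_8$ equal $\pi_1(I_1),\dots,\pi_1(I_4)$.
\item The crude bound for this term is $cD_A^2\|f'''\|$, which is of order $n^{-1}$ even for smooth $q$, not $E_A^3$.
\item Conditioning on the indices, as you suggest, gives $T_1\stackrel{d}{=}a+T_B$ with $a$ and $B$ depending on the conditioning values. That controls shifts for a fixed $B$ (as in Lemma \ref{lem4-1}), but it does not split off the expectation.
\end{itemize}
The paper's remedy (Section \ref{sec7}, item 1) is an additional exchange level: a set $M_{16}\subset N^{16}$, a permutation $v(\underline i)$ sending $i_1,\dots,i_8$ to $i_9,\dots,i_{16}$, and five random permutations $\pi_1,\dots,\pi_5$. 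Conditioning on these 16 row and 16 column indices is what brings in $B\in N(16,\hat A)$. Nested applications of Theorem \ref{th2-4} play no role.

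\paragraph{Two of your order claims are false.} They hide terms of exactly the order you must compute.
\begin{itemize}
\item The claim $\lambda_{1,A}\beta_A/n\le E_A^3$ fails. For $|\hat a_{ij}|\asymp n^{-1/2}$ with $\sum\hat a_{ij}^3\asymp n^{1/2}$, the left side is $\asymp n^{-1}$ and the right side $\asymp n^{-3/2}$. That is precisely the order of the $\lambda_{1,A}^2$ term.
\item The claim $1/n\lesssim E_A^3$ also fails. The bound $E_A^3\ge 2^{-5/2}n^{-3/2}$ only gives $n^{-3/2}\lesssim E_A^3$, and $\lambda_{2,A}$ itself contains the term $3n^{-1}$. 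So $O(1/n)$ renormalisation effects from replacing $\mathscr T_A$ by $\mathscr T_B$ in main terms cannot be discarded.
\end{itemize}
What is true is $|\lambda_{1,A}|D_A^2\le(\beta_A/n)(\delta_A/n)\le E_A^3$, by log-convexity of $p\mapsto\sum|\hat a_{ij}|^p$ and $\sum\hat a_{ij}^2=n-1$.

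Consequently the factor $E(\mathscr T_Af_z'(\mathscr T_A))$ in the analogue of (\ref{eq3-12}) needs a first-order Edgeworth expansion of $E(\mathscr T_A1_{(-\infty,z]}(\mathscr T_A))$, with error $O(D_A^2)$ uniformly in $z$. Neither of the paper's theorems supplies this. Theorem \ref{th2-4} concerns only the distribution function. Theorem \ref{th2-1} has error $\|q''\|D_A^2\asymp E_A^{-2}D_A^2$ at smoothing scale $E_A$, which is far too large.

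\paragraph{The remainder terms are misidentified.} Since $f'''=2f'+xf''+q''$, the third-order remainders reduce to three kinds of term:
\begin{enumerate}
\item[(i)] $x^2$-weighted first differences of $q_z$. These need an analogue of Proposition \ref{prop5-7} for $E(\mathscr T_A^21_{(-\infty,z]}(\mathscr T_A))$, which your plan does not mention.
\item[(ii)] $|x|$-weighted second differences. This is where (\ref{eq2-9}), with its factor $|z|+1$, is used.
\item[(iii)] A signed third difference $E(q_z''(x+y)-q_z''(x))$, controlled by the third-difference condition (2.8$'$) implied by (\ref{eq2-8}).
\end{enumerate}
Term (iii) must stay outside any absolute value, as in Lemma \ref{lem5-3}. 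Your Step 2 instead puts the absolute values inside the expectations, and it lists the harmless Lipschitz term $|x||f(x+y)-f(x)|$ in place of (i) and (ii).

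\paragraph{The closing induction.} An induction over $N(16,\hat A)$ cannot close, because submatrices of such $B$ leave the class on which (\ref{eq2-8}) and (\ref{eq2-9}) are assumed. It is also unnecessary: in the paper the regularity hypotheses replace any recursive argument.
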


\begin{rems}\label{rems2-11}
(a) The conditions (\ref{eq2-5}), (\ref{eq2-8}) and (\ref{eq2-9}) are analogous to those necessary and sufficient
conditions which \citet{BiRob1982} used to establish Edgeworth expansions for the i.i.d. case. 

Following their arguments on page 502, one can deduce from (\ref{eq2-10}) the conditions (\ref{eq2-8}) 
and (\ref{eq2-9}) for $B = \hat{A}$ and
from (\ref{eq2-6}) the condition (\ref{eq2-5}) for $B = \hat{A}$ 
(with new $C_{1}$, $C_{2}$, $C_{3}$) [cf. \citet{Schneller2025ThesisV2}, Proposition 3.1.20,
for a complete proof].

However, note that these arguments do not show (\ref{eq2-8}) and (\ref{eq2-9})
[(\ref{eq2-5})] for general $B \in N(16, \hat{A})$
[$B \in N(8, \hat{A})$].

(b) It seems likely, at least to the present author, that the Theorems \ref{th2-4} and \ref{th2-7} remain correct, if we
assume (\ref{eq2-5}), (\ref{eq2-8}) and (\ref{eq2-9}) only for $\hat{A}$ instead of $B \in N(8, \hat{A})$
[$B \in N(16, \hat{A})$]. However, a proof eludes me.
\end{rems}

In the special case where $a_{ij} = e_{i}d_{j}$, we can replace (\ref{eq2-5}), (\ref{eq2-8}) and (\ref{eq2-9})
by the conditions of \citet{vanZwet1982}. If we define
\[\bar{e} = \sum_{i} e_{i}/n,\hspace{20pt}
\bar{d} = \sum_{j} d_{j}/n,\hspace{20pt}
x^{+} = \max \{ x, 0 \}\ \ \ \text{for}\,\, x \in \mathbb{R}
\]
and write $\lambda$ for the Lebesgue measure, we have
\begin{theorem}\label{th2-12}
Suppose that there exist positive constants $e$, $E$, $d$, $D$ and $\delta$ such that
\begin{equation} \label{eq2-13}
\begin{array}{@{}l@{\hspace*{4ex}}l@{}}
\displaystyle{\sum_{i} \big| e_{i} - \bar{e} \big|^{r} \geq e n,}&
\displaystyle{\sum_{i} \big| e_{i} - \bar{e} \big|^{k} \leq E n}\\[1.5ex]
&\hspace*{13.3ex}\text{for}\,\, \text{some}\,\, k > 2\,\, \text{and}\,\,\, 0 < r < k,
\end{array}
\end{equation}
\begin{equation} \label{eq2-14}
\begin{array}{@{}l@{\hspace*{4ex}}l@{}}
\displaystyle{\sum_{j} \big| d_{j} - \bar{d} \big|^{m} \geq d n,}&
\displaystyle{\sum_{j} \big| d_{j} - \bar{d} \big|^{s} \leq D n}\\[1.5ex]
&\hspace*{13.3ex}\text{for}\,\, \text{some}\,\, s > 2\,\, \text{and}\,\,\, 0 < m < s,
\end{array}
\end{equation}
\begin{equation} \label{eq2-15}
\begin{array}{@{}l@{}}
\displaystyle{\lambda\bigl(\bigl\{ x : \big| x - d_{j} \big| < \zeta\,\,\text{for}\,\, \text{some}\,\, 1 \leq j \leq n
\bigr\}\bigr) \geq \delta n \zeta}\\[1.5ex]
\hspace*{43ex}\text{for}\,\, \text{some}\,\, \zeta \geq n^{-3/2}\,\log n.
\end{array}
\end{equation}
(a) Then there exist positive constants $\mathscr{K}_{1}$ and $\mathscr{K}_{2}$ depending only on 
$e$, $E$, $d$, $D$, $\delta$ and $r$, $k$, $m$, $s$ such that
\begin{equation} \label{eq2-16}
\begin{array}{@{}l@{\hspace*{0.8ex}}c@{\hspace*{0.8ex}}l@{}}
\big|\big| \mathscr{F}_{\!A} - e_{1,A} \big|\big|&\leq&\mathscr{K}_{1} (\log n)^2 D^{2}_{\!A}\\[1.5ex] 
&\leq&\mathscr{K}_{2} (\log n)^2 n^{- 1\, +\, ((4/k)\, -\, 1)^{+}\, 
+\, ((4/s)\, -\, 1)^{+}}.
\end{array}
\end{equation}
(b) Let $\epsilon > 0$. Then there exist positive constants $\mathscr{K}_{3}$ and $\mathscr{K}_{4}$ depending only on 
$e$, $E$, $d$, $D$, $\delta$, $r$, $k$, $m$, $s$ and $\epsilon$ such that
\begin{equation} \label{eq2-17}
\begin{array}{@{}l@{\hspace*{0.8ex}}c@{\hspace*{0.8ex}}l@{}}
\big|\big| \mathscr{F}_{\!A} - e_{2,A} \big|\big|&\leq&\mathscr{K}_{3} n^{\epsilon} E^{3}_{\!A}\\[1.5ex] 
&\leq&\mathscr{K}_{4} n^{- (3/2)\, +\, \epsilon\, +\, ((5/k)\, -\, 1)^{+}\, 
+\, ((5/s)\, -\, 1)^{+}}.
\end{array}\vspace*{0.5ex}
\end{equation}
\end{theorem}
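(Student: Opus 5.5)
We would prove Theorem \ref{th2-12} by reducing part (a) to Theorem \ref{th2-4}, and part (b) to Theorem \ref{th2-7}. The main task is to verify the smoothness conditions (\ref{eq2-5}), respectively (\ref{eq2-8}) and (\ref{eq2-9}). For (a) the constant $C_1$ is allowed to be of order $(\log n)^2$, and for (b) the constants $C_2, C_3$ of order $n^{\epsilon}$.

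\emph{Step 1 (reduction to submatrices).} Take $B \in N(8,\hat A)$ (or $N(16,\hat A)$). Then $B$ is again of product form $\tilde e_i \tilde d_j$ up to centering, built from at most $16$ deleted $e$'s and $d$'s. Removing finitely many indices changes the sums in (\ref{eq2-13})$-$(\ref{eq2-15}) by a bounded relative amount, because the $k$-th and $s$-th moment bounds force $\max_i|e_i-\bar e| \le (En)^{1/k}=o(n)$. Hence the conditions of \citet{vanZwet1982} hold for every such $B$, with slightly worse constants that are uniform in $B$. The same argument gives $\sigma_B^2 \asymp n$ relative to the normalization of $\hat A$.

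\emph{Step 2 (characteristic function bounds).} We would apply van Zwet's result to the characteristic function $\varphi_B$ of $T_B$. It gives $|\varphi_B(t)| \le \exp(-c\log^2 n)$, or at least $\le n^{-c'}$ with $c'$ as large as needed, for $c_1\log n \le |t| \le c_2 n^{3/2}$ (cf. (\ref{eq6-6})). For $|t|\le c_1 \log n$ we would use the standard Edgeworth-type estimate $|\varphi_B(t)-\hat e_{1,B}(t)| \lesssim D_A^2(|t|^3+t^4)e^{-ct^2}$ obtained from cumulant expansions for permutation statistics, or simply $|\varphi_B(t)|\le e^{-ct^2}$ together with the bound on the expansion.

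\emph{Step 3 (from characteristic functions to second differences).} This step corresponds to Lemma \ref{lem6-3} and is the one we expect to be the main obstacle. Write $\Delta^2_y F_B(z)$ via smoothing. Convolve $F_B$ with a kernel whose Fourier transform is supported in $|t|\le T:=c_2 n^{3/2}$. The smoothing error is $O(T^{-1}) = O(n^{-3/2})$, which is small compared with $D_A^2 \ge c/n$. Then represent the smoothed second difference by
\[
\frac{1}{2\pi}\int e^{-itz}\frac{(e^{-ity}-1)^2}{-it}\,\varphi_B(t)\,\hat k(t/T)\,dt .
\]
The factor $(e^{-ity}-1)^2$ is $\le \min(4,t^2y^2)$. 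On $|t|\le c_1\log n$, the normal part contributes $O(y^2)$ and the error part contributes $O(D_A^2)$. On $c_1\log n\le |t|\le T$, the tiny bound on $|\varphi_B|$ times $\log T$ gives a negligible contribution. The delicate point is that the lattice or near-lattice structure of $T_B$ means $F_B$ is not smooth at scales below $n^{-3/2}$. Condition (\ref{eq2-15}) with $\zeta\ge n^{-3/2}\log n$ is exactly what lets van Zwet's bound extend up to $T$. The residual smoothing error must be controlled at the scale $y\le D_A$ rather than pointwise, and this is where the loss of $(\log n)^2$ appears.

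\emph{Step 4 (applying the theorems and bounding $D_A$, $E_A$).} With $C_1\lesssim(\log n)^2$, Theorem \ref{th2-4} yields the first inequality of (\ref{eq2-16}). Part (b) follows analogously: (\ref{eq2-8}) is obtained from a third-order version of Step 3, and (\ref{eq2-9}) from the extra factor $(|z|+1)$, handled by differentiating in $t$ (i.e.\ a bound on $\varphi_B'$). Together these give $C_2,C_3\lesssim n^{\epsilon}$, and Theorem \ref{th2-7} applies. For the second inequalities, note $\hat a_{ij}\asymp (e_i-\bar e)(d_j-\bar d)/n$ up to constants. Then $D_A^2 = n^{-1}\sum\hat a_{ij}^4 \asymp n^{-3}\sum_i|e_i-\bar e|^4\sum_j|d_j-\bar d|^4$. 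If $k\ge4$, Hölder gives $\sum_i|e_i-\bar e|^4\le En$. If $k<4$, we use $\sum|e_i-\bar e|^4 \le \max|e_i-\bar e|^{4-k}\sum|e_i-\bar e|^k \le (En)^{(4-k)/k}En$, which contributes a factor $n^{(4/k)-1}$. The same argument applies to the $d$'s, giving the stated power. The bound for $E_A^3$ with exponent $5$ follows in the same way.
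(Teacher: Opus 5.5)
Your architecture for (a) matches the paper's. You verify (\ref{eq2-5}) for every $B\in N(8,\hat A)$ with $C_1\asymp(\log n)^2$ from van Zwet's bound (\ref{eq6-6}), apply Theorem~\ref{th2-4}, and bound $D_{\!A}^2$ by the moment conditions. That last computation is fine once you write $\hat a_{ij}\asymp(e_i-\bar e)(d_j-\bar d)/\sqrt n$, since $\sigma_{\!A}^2\asymp n$; your $n^{-3}$ already assumes this. In Step 1, the point for (\ref{eq2-15}) is that each deleted $d_j$ removes measure at most $2\zeta$, whence $n\ge 32/\delta$ in Lemma~\ref{lem6-5}.

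The gap is Step 3, which you call the main obstacle and then do not resolve. The claim that smoothing $F_B$ with a band-limited kernel costs only $O(T^{-1})$ is not a general fact. $F_B$ is purely discrete, so $\sup_z|F_B-F_B*K_T|$ is of the order of the concentration function of $F_B$ at scale $1/T$, plus kernel-tail terms. The $O(1/T)$ in Esseen's smoothing lemma refers to a comparison function with bounded density, not to $F_B$ itself. Bounding that concentration is itself a local smoothness statement that needs proof.

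The paper's device is Lemma~\ref{lem6-3}. Following von Bahr, it smooths with a $C^\infty$ density supported in $[-\theta,\theta]$, so that $F_\theta(x-\theta)\le F(x)\le F_\theta(x+\theta)$ holds exactly. This yields $|\Delta_y^2F(z)|\le(y^2+\theta)\int(1+|t|)|\hat F(t)\hat U_\theta(t)|\,dt$, so the entire cost of smoothing is the additive $\theta$. The scale is $\theta=D_{\!A}^2$, not $n^{-3/2}$. That additive term is harmless because it equals the target accuracy. And since $\theta n^{3/2}\gtrsim n^{1/2}$, the rapid decay of $\hat U$ handles $|t|\ge b_4n^{3/2}$ (the term $I_3$ in Lemma~\ref{lem6-7}).

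Your route can be repaired as follows.
\begin{itemize}
\item Use Esseen's concentration inequality $Q(F_B;h)\le Ch\int_{|t|\le1/h}|\hat F_B(t)|\,dt$, with $|\hat F_B|\le1$ for $|t|\le b_3\log n$ and (\ref{eq6-6}) beyond.
\item This gives $Q(F_B;h)=O(h\log n)$ for $h\ge n^{-3/2}/b_4$.
\item For a kernel with rapidly decaying tails, the smoothing error is then $O(n^{-3/2}\log n)=o(D_{\!A}^2)$, because $D_{\!A}^2\ge1/(4n)$.
\end{itemize}
With that supplied, the band-limited kernel is a legitimate alternative that dispenses with the $I_3$ estimate. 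Section~\ref{sec7} names that estimate as the source of the factor $n^{\epsilon}$ in (b).

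Your low-frequency input is misdirected. The paper uses only $|\hat F_B|\le1$ on $|t|\le b_3\log n$, and $\int_0^{b_3\log n}t\,dt$ is exactly where $(\log n)^2$ comes from, not any ``residual smoothing error''. The Edgeworth-type bound $|\hat F_B-\hat e_{1,B}|\lesssim D_{\!A}^2(|t|^3+t^4)e^{-ct^2}$ that you call standard is an unproved input at this generality (all $B\in N(8,\hat A)$, only $k,s>2$). Were it available, Esseen's smoothing lemma with (\ref{eq6-6}) would give (\ref{eq2-16}) directly, even without the logarithm, and Theorem~\ref{th2-4} would be superfluous. Drop it; the trivial bound suffices.

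Part (b) is only sketched in the paper as well, with the proof deferred to the thesis. Still, your treatment of (\ref{eq2-9}) has two open points:
\begin{itemize}
\item It needs bounds on $\hat F_B'$ that van Zwet's theorem does not give directly. They can be obtained by conditioning on $\pi(i)=j$, which expresses $\hat F_B'$ through characteristic functions of matrices in $N(17,\hat A)$.
\item The weight $|z|+1$ must also be carried through the smoothing error, where an unweighted $O(n^{-3/2}\log n)$ is not enough for large $|z|$.
\end{itemize}
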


\begin{rems}\label{rems2-18}
(a) For $[((5/k)\, -\, 1)^{+}\, -\, ((4/k)\, -\, 1)^{+}]\, +\, [((5/s)\, -\, 1)^{+}\, -$
\linebreak
$((4/s)\, -\, 1)^{+}]\, <\, \frac{1}{2}$ we can
deduce the second estimate of $|| \mathscr{F}_{\!A} - e_{1,A} ||$ in (\ref{eq2-16}) from
\linebreak
(\ref{eq2-17}). In this case the factor $(\log n)^2$ is superfluous.

(b) Let
\[
d_{j} = E\bigl(J(U_{j:n})\bigr),\hspace*{4ex}j = 1,\ldots,n\ \text{(exact scores)},
\]
where $J : (0,1) \rightarrow \mathbb{R}$ is an integrable function and $U_{j:n}$ denotes the $j$th order
statistic in a random sample of size $n$ from the uniform distribution on $(0,1)$.

If $J$ is nonconstant, continuously differentiable and satisfies $\int_{0}^{1} |J(t)|^{s} dt < \infty$
for some $s > 2$, then $n_{e}(J) = \sup \{ m \in \mathbb{N} : \sum_{1}^{m} | d_{j} - \bar{d}| = 0 \} < \infty$,
and (\ref{eq2-14}) (with this $s$!) and (\ref{eq2-15}) are fulfilled for all $n > n_{e}(J)$ with constants
$d$, $D$ and $\delta$ depending only on $J$ and $s$. For a proof see \citet{Schneller2025ThesisV2}, 
proof of Theorem 4.4.35(a), (b).

(c) Let
\[
d_{j} = J\Bigl(\frac{j}{n+1}\Bigr),\hspace*{4ex}j = 1,\ldots,n\ \text{(approximate scores)},
\]
where $J : (0,1) \rightarrow \mathbb{R}$ is a function.

If $J$ is nonconstant, continuously differentiable and satisfies
\begin{equation} \label{eq2-19}
\begin{array}{@{}l@{\hspace*{0.8ex}}c@{\hspace*{0.8ex}}l@{}}
|J'(t)|&\leq&\Gamma \bigl( t(1-t) \bigr)^{- 1\, -\, (1/s)\, +\, \beta}\hspace*{3ex}
\text{for}\,\, \text{all}\,\, t \in (0,1),\\[1.5ex]
&&\hspace*{15ex}\text{for}\,\, \text{some}\,\, \Gamma > 0,\, \mathbb{N} \ni s \geq 3\,\, 
\text{and}\,\,\, 0 < \beta < \dfrac{1}{s},
\end{array}
\end{equation}
then $n_{a}(J) = \sup \{ m \in \mathbb{N} : \sum_{1}^{m} | d_{j} - \bar{d}| = 0 \} < \infty$, and
(\ref{eq2-14}) (with this $s$!) and (\ref{eq2-15}) are fulfilled for all $n > n_{a}(J)$ with constants
$d$, $D$ and $\delta$ depending only on $J$, $s$ and $\beta$. For a proof see \citet{Schneller2025ThesisV2}, 
proof of Theorem 4.4.21(a), (b).

Part (d) of this theorem of \citet{Schneller2025ThesisV2} contains an extension of \citet{Does1983}, Theorem 2.1. 
Does uses expansions which are slightly different from ours. Roughly speaking, he uses 
in his expansions integrals of $J$ whereas we use the corresponding Riemann-sums. 
It is shown in \citet{Schneller2025ThesisV2}, proof of Theorem 4.4.21(d) that, 
if we assume in addition $k \geq 4$ in (\ref{eq2-13}) and $s \geq 4$ in (\ref{eq2-19}), then the
difference between these two expansions is $\bigO(n^{- (3/2)\, +\, 3((1/s)\, -\, \beta)})$.
Combining this with the rate of (\ref{eq2-17}), we obtain a result with a better convergence rate 
and with weaker assumptions for $J$ and $e_{i}$ than Does. 
He obtained the rate ${\scriptstyle \bigO}(n^{-1})$ and assumed especially for $J$ that
\[
\limsup\limits_{t \rightarrow\, 0,1} t(1-t)|J''(t)/J'(t)| < 2
\hspace*{3ex}\text{and}\hspace*{3ex}
|J'''(t)| \leq \overline{\Gamma}\bigl(t(1-t)\bigr)^{- 3\, -\, (1/14)\, +\, \beta}
\]
with $\overline{\Gamma}$, $t$ and $\beta$ as in (\ref{eq2-19}). But from the last inequality, (\ref{eq2-19})
follows with $s = 14$.
\end{rems}

Finally, we remark that the constants introduced in this section remain fixed throughout the paper.
In contrast $c, c_{1}, c_{2}, \ldots$ denote positive constants which depend only on the formula
where they appear.

\section{Proof of the basic equation}  \label{sec3}
In this and the next (the next but one) section we prove Theorem \ref{th2-1} (Theorem \ref{th2-4}). For that
we fix an $n\,{\times}\,n$-matrix $A$ with $\sigma_{\!A} > 0$. Of course, we may assume $a_{ij} = \hat{a}_{ij}$.
Furthermore, let $q : \mathbb{R} \rightarrow \mathbb{R}$ be a function which is assumed to be bounded and
differentiable throughout this section.

The essence of Stein's method is that if we define
\begin{equation} \label{eq3-1}
f(x) = (\Theta q)(x) = \psi(x)^{-1} \int_{- \infty}^{x} \bigl( q(y) - \Phi(q) \bigr) \psi(y) dy
\end{equation}
[$\Phi(q)$ denotes the standard normal expectation of $q$], then we obtain the differential equation
\begin{equation} \label{eq3-2}
f'(x) - xf(x) = q(x) - \Phi(q),\ \ \ \ \ x \in \mathbb{R},
\end{equation}
and thus for any random variable $\xi$ we have
\begin{equation} \label{eq3-3}
E(q(\xi)) - \Phi(q) = E(f'(\xi)) - E(\xi f(\xi)).
\end{equation}
Therefore, in order to estimate $E(q(\xi)) - \Phi(q)$, we can estimate the expression $E(f'(\xi)) - E(\xi f(\xi))$.

Assume for a moment that $\xi = S_{n} = n^{-1/2} (X_{1} + \ldots + X_{n})$ where $X_{1},\ldots,$ $X_{n}$
are i.i.d with $E(X_{i}) = 0$, $V(X_{i}) = 1$. Then we have
\begin{equation} \label{eq3-4}
E(f'(S_{n})) - E(S_{n} f(S_{n})) = E(f'(S_{n})) - \sqrt{n}E(X_{n} f(S_{n})).
\end{equation}
Define $S_{n-1}^{n} = n^{-1/2} (X_{1} + \ldots + X_{n-1})$. Now, in order to prove the classical CLT
or Berry-Esseen theorem, we have to carry out a Taylor expansion of first order of $f$ about $S_{n-1}^{n}$ in
$E(X_{n} f(S_{n-1}^{n} + n^{-1/2} X_{n}))$ and then apply the independence of
$S_{n-1}^{n}$ and $X_{n}$ [for more details see, e.g., \citet{Bolthausen1984}, Section 2].
For Edgeworth expansions we have to use Taylor expansions of higher order of $f$ and $f'$ [for more details
see, e.g., \citet{Schneller2025ThesisV2}, Chapter 1, especially Section 3 or the related paper of \citet{Barbour1986}].

However, in our case we have $\xi = T_{\!A}$ and therefore there is a priori no comparable independence. For that reason
we use an extension of Bolthausen's combinatorial method. This method yields a ''bit'', but for us enough
independence.

For an intuitive understanding of this method, 
it should be noted first that the {''source''} of the independence of this method is the
independence of the random vector $\underline{I} = (I_{1},\ldots,I_{8})$ and the random permutation $\pi_{1}$
(see text below after equation (\ref{eq3-4d})). This
independence remains valid between $\underline{I}$ and the random
permutations $\pi_{2}$, $\pi_{3}$ and $\pi_{4}$, which are derived from $\pi_{1}$ (see Lemma \ref{lem3-5}(a)).

Secondly, now it is the best to inspect Table \ref{tab2} from right to left and
consider the results of Lemma \ref{lem3-5}(b)$-$(d) as well as 
(\ref{eq3-5a})$-$(\ref{eq3-8}). 

In order to prove our analogue of (\ref{eq3-4}),
we use the independence of $\pi_{4}$ and $I_{1}$. This is done in (\ref{eq3-14}) where we obtain
$E(T_{\!A} f(T_{\!A})) = n E(a_{I_{1}J_{1}} f(T_{4}))$.

For the next step we need the independence of $a_{I_{1}J_{1}}$ and a part of $T_{4}$.  
$(I_{1}, J_{1})$ and $\pi_{4}$ are \textit{not} independent, but $(I_{1}, J_{1})$ and $\pi_{3}$ are,
which is achieved through an ''exchange'' of $J_{1}$ and $J_{2}$. Thus $a_{I_{1}J_{1}}$ and the
part $T_{3}$ of $T_{4} = T_{3} + (T_{4} - T_{3})$ are independent. 

Next, we need the independence  of $T_{4} - T_{3} = a_{I_{1}J_{1}} + a_{I_{2}J_{2}} -
a_{I_{1}J_{2}} - a_{I_{2}J_{1}}$ and a part of $T_{3}$. $(I_{1}, I_{2}, J_{1}, J_{2})$ and $\pi_{3}$
are \textit{not} independent, but $(I_{1}, I_{2}, J_{1}, J_{2})$ and $\pi_{2}$ are, which is achieved
through an ''exchange'' of the blocks $(J_{1}, J_{2})$ and $(J_{3}, J_{4})$. Thus $T_{4} - T_{3}$ and
the part $T_{2}$ of $T_{3} = T_{2} + (T_{3} - T_{2})$ are independent.

Now it is clear that we find our last independence statement through an ''exchange'' of the blocks
$(J_{1},\ldots,J_{4})$ and $(J_{5},\ldots,J_{8})$. We get, that $T_{3} - T_{2}$ and the
part $T_{1}$ of $T_{2} = T_{1} + (T_{2} - T_{1})$ are independent.

After these considerations we give the explicit construction which starts with the last step of the above
considerations and ends with the first step of these considerations. Let
\[
\begin{array}{@{}l@{\hspace*{0.8ex}}c@{\hspace*{0.8ex}}
l@{\hspace*{0.4ex}}l@{\hspace*{1.6ex}}l@{\hspace*{1.6ex}}
l@{\hspace*{1.6ex}}l@{\hspace*{1.6ex}}l@{}}
N&=&\multicolumn{5}{@{\hspace*{0.2ex}}l@{}}{\bigl\{ 1,\ldots,n \bigr\},}\\[1ex]
M_{4}&=&\bigl\{&\multicolumn{5}{@{\hspace*{0.2ex}}l@{}}
{\overline{i} = ( i_{1},\ldots,i_{4} ) \in N^{4}\,:\,\overline{i}\,\, \text{satisfies}\,\, \text{the}\,\,
\text{equivalence}:}\\[1ex]
&&&\text{(t1)}&i_{1} = i_{2}&\Leftrightarrow&i_{3} = i_{4}
\,\bigr\},\\[1ex]
M_{8}&=&\bigl\{&\multicolumn{5}{@{\hspace*{0.2ex}}l@{}}
{\underline{i} = ( i_{1},\ldots,i_{8} ) \in N^{8}\,:\,\underline{i}\,\, \text{satisfies}\,\, \text{the}\,\,
\text{equivalences}:}\\[1ex]
&&&\text{(t1)}&i_{1} = i_{2}&\Leftrightarrow&i_{3} = i_{4};\\[1.2ex]
&&&\text{(u1)}&i_{1} = i_{2}&\Leftrightarrow&i_{7} = i_{8};\\[1.2ex]
&&&\text{(u2)}&i_{3} = i_{4}&\Leftrightarrow&i_{5} = i_{6};\\[1.2ex]
&&&\text{(u3)}&i_{l} = i_{k}&\Leftrightarrow&i_{l+6} = i_{k+2}&\text{for}\ l = 1, 2;\ \, k = 3,4
\,\bigr\}.
\end{array}
\]

For each $\underline{i} \in M_{8}$ we fix once and for all permutations $u(\underline{i})$, $t(\underline{i})$
and $s(\underline{i})$ of $N$ with properties as described in Table \ref{tab1}.

\begin{table*}[h!]
\centering
\caption{Definition of the permutations $u(\underline{i})$, $t(\underline{i})$
and $s(\underline{i})$}
\label{tab1}
\begin{tabular}{@{}c|c|c|c@{}} \thickhline
\rule[-1.4ex]{0ex}{4.4ex}\hspace*{11.2ex}&
\hspace*{6.3ex}{\boldmath $u(\underline{i})$}\hspace*{6.3ex}&
\hspace*{6.3ex}{\boldmath $t(\underline{i})$}\hspace*{6.3ex}&
\hspace*{6.3ex}{\boldmath $s(\underline{i})$}\hspace*{6.3ex}\\ \thickhline
\rule[-0.8ex]{0ex}{3ex}$i_{1}$&$i_{7}$&$i_{4}$&$i_{2}$\\
\rule[-1ex]{0ex}{3.2ex}$i_{2}$&$i_{8}$&$i_{3}$&$i_{1}$\\ \hline
\rule[-1ex]{0ex}{3.6ex}$i_{3}$&$i_{5}$&Values&\\
\rule[-1.2ex]{0ex}{3.6ex}$i_{4}$&$i_{6}$&$\in \{\,i_{1},\ldots,i_{4}\,\}$&\\ \cline{1-3}
\rule[-0.8ex]{0ex}{3.2ex}$i_{5}$&&\multicolumn{2}{|c}{}\\
\rule[-0.8ex]{0ex}{3ex}$i_{6}$&Values&\multicolumn{2}{|c}{Remain}\\
\rule[-0.8ex]{0ex}{3ex}$i_{7}$&$\in \{\,i_{1},\ldots,i_{8}\,\}$&\multicolumn{2}{|c}{fixed}\\
\rule[-0.8ex]{0ex}{3ex}$i_{8}$&&\multicolumn{2}{|c}{}\\ \cline{1-2}
\rule[-1ex]{0ex}{3.6ex}$N \setminus \{\,i_{1},\ldots,i_{8}\,\}$&\multicolumn{3}{|c}{}\\ \thickhline
\end{tabular}
\end{table*}

We remark that the map $\underline{i} \rightarrow u(\underline{i})$ is well defined because of
(u1), (u2), (u3) and that the map $\underline{i} \rightarrow t(\underline{i})$ is well defined because of (t1).

As the reader will see, we have not defined all values of $u(\underline{i})$ and $t(\underline{i})$ explicitly.
The reason for that is that we do not need an explicit definition and that we would have to consider a lot of cases
for the explicit definition of these values (e.g. $i_{4} = i_{5}$, then $[u(\underline{i})](i_{5}) = i_{6}$;
but $i_{4} \not= i_{5}$, then $[u(\underline{i})](i_{5}) \not= i_{6}$).

For each $\underline{i} \in M_{8}$ other useful notations are
\begin{align} 
\label{eq3-4z1}
\gamma(\underline{i}) &= |\{i_{1}, i_{2}\}| = |\{i_{3}, i_{4}\}|,\\[1ex]
\label{eq3-4z2}
\theta(\underline{i}) &= |\{i_{1}, i_{2}, i_{3}, i_{4}\}| = |\{i_{5}, i_{6}, i_{7}, i_{8}\}|.
\end{align}
The second {''=''} in (\ref{eq3-4z1}) is valid due to (t1).
Similarly, the second {''=''} in (\ref{eq3-4z2}) is from (u1), (u2) and (u3).

Furthermore, let $\underline{I} = (I_{1},\ldots,I_{8})$ be a random element on $M_{8}$ whose 
distribution can be described as follows:
\begin{equation} \label{eq3-4a}
(I_{1},I_{2})\,\, \text{is uniformly distributed on}\,\, N^{2}.
\end{equation}
\begin{equation} \label{eq3-4b}
\begin{array}{c}
\text{For each}\,\, (i_{1},i_{2}) \in N^{2},\\[0.8ex]
(I_{3},\,I_{4})\,\Big|\,(I_{1},\,I_{2}) = (i_{1},\,i_{2})\\[1.5ex]
\text{is uniformly distributed on}\,\, 
\bigl\{(\eta_{3},\eta_{4}) : ( i_{1},i_{2},\eta_{3},\eta_{4} ) \in M_{4} \bigr\}.
\end{array}
\end{equation} 
\begin{equation} \label{eq3-4c}
\begin{array}{c}
\text{For each}\,\, (i_{1},i_{2},i_{3},i_{4}) \in M_{4},\\[0.8ex] 
(I_{5}, I_{6}, I_{7}, I_{8})\,\Big|\,(I_{1},\,I_{2},\,I_{3},\,I_{4}) = (i_{1},\,i_{2},\,i_{3},\,i_{4})\\[1.5ex]
\text{is uniformly distributed on}\\[1ex] 
\bigl\{(\eta_{5},\eta_{6},\eta_{7},\eta_{8}) :
( i_{1},i_{2},i_{3},i_{4},\eta_{5},\eta_{6},\eta_{7},\eta_{8}) \in M_{8} \bigr\}.
\end{array}
\end{equation}\vspace*{0.3ex}

To summarize, the multiplication rule together with (\ref{eq3-4z1}) and (\ref{eq3-4z2}) yields 
\begin{equation} \label{eq3-4d}
P\bigl( \underline{I} = \underline{i} \bigr) =
\dfrac{\bigl( n - \theta(\underline{i}) \bigr)!}{n!} \cdot
\dfrac{\bigl( n - \gamma(\underline{i}) \bigr)!}{n!} \cdot \dfrac{1}{n^2}\,\,\,\,\,\,\,\,\,\,\,
\text{for each}\,\,\underline{i} \in M_{8}.
\end{equation}

Additionally, let $\pi_{1}$ be a random permutation that is uniformly distributed on $\mathscr{P}_{n}$
and independent of $\underline{I}$. Furthermore, we define
\[
\begin{array}{l}
\pi_{2} = \pi_{1} \circ u(\underline{I}),\hspace*{2.8ex}
\pi_{3} = \pi_{2} \circ t(\underline{I}),\hspace*{2.8ex}
\pi_{4} = \pi_{3} \circ s(\underline{I}),\\[1.5ex]
J_{1} = \pi_{1}(I_{5}),\hspace*{2.8ex}
J_{2} = \pi_{1}(I_{6}),\hspace*{2.8ex}
J_{3} = \pi_{1}(I_{7}),\hspace*{2.8ex}
J_{4} = \pi_{1}(I_{8}),\\[1.5ex]
J_{5} = \pi_{1}(I_{1}),\hspace*{2.8ex}
J_{6} = \pi_{1}(I_{2}),\hspace*{2.8ex}
J_{7} = \pi_{1}(I_{3}),\hspace*{2.8ex}
J_{8} = \pi_{1}(I_{4}).
\end{array}
\]

Using the definition of $u$, $t$ and $s$, we see in Table \ref{tab2} how $\pi_{1},\ldots,\pi_{4}$
map $I_{1},\ldots,I_{8}$. 

\begin{table*}[h!]
\centering
\caption{Values of $I_{1},\ldots,I_{8}$ under $\pi_{1}, \pi_{2}, \pi_{3}, \pi_{4}$}
\label{tab2}
\begin{tabular}{@{}c|c|c|c|c@{}} \thickhline
\rule[-1.4ex]{0ex}{4.4ex}\hspace*{11.2ex}&
\hspace*{4.5ex}{\boldmath $\pi_{1}$}\hspace*{4.5ex}&
\hspace*{4.5ex}{\boldmath $\pi_{2}$}\hspace*{4.5ex}&
\hspace*{4.5ex}{\boldmath $\pi_{3}$}\hspace*{4.5ex}&
\hspace*{4.5ex}{\boldmath $\pi_{4}$}\hspace*{4.5ex}\\ \thickhline
\rule[-0.8ex]{0ex}{3ex}$I_{1}$&$J_{5}$&$J_{3}$&$J_{2}$&$J_{1}$\\
\rule[-0.8ex]{0ex}{3ex}$I_{2}$&$J_{6}$&$J_{4}$&$J_{1}$&$J_{2}$\\ \cline{3-5}
\rule[-0.8ex]{0ex}{3ex}&&&Random&\\
\rule[-0.8ex]{0ex}{3ex}$I_{3}$&$J_{7}$&$J_{1}$&variables&Same\\
\rule[-0.8ex]{0ex}{3ex}$I_{4}$&$J_{8}$&$J_{2}$&$\in \sigma(I_{1},\ldots,I_{4},$&bei $\pi_{3}$\\ 
\rule[-0.8ex]{0ex}{3ex}&&&$\ \ \ \ \,\,\,\,J_{1},\ldots,J_{4})$&\\ \cline{2-5}
\rule[-0.8ex]{0ex}{3ex}$I_{5}$&$J_{1}$&Random&&\\
\rule[-0.8ex]{0ex}{3ex}$I_{6}$&$J_{2}$&variables&Same&Same\\
\rule[-0.8ex]{0ex}{3ex}$I_{7}$&$J_{3}$&$\in \sigma(I_{1},\ldots,I_{8},$&as $\pi_{2}$&as $\pi_{2}$\\
\rule[-0.8ex]{0ex}{3ex}$I_{8}$&$J_{4}$&$\ \ \ \ \,\,\,\,J_{1},\ldots,J_{8})$&&\\ \thickhline
\end{tabular}
\end{table*}

As usual we define that $\sigma(X)$ is the $\sigma$-field generated by the
random vector $X$ and $f \in \sigma(X)$ means that $f$ is measurable relative to $\sigma(X)$.

The most important statements regarding distribution and independence 
for the above construction are contained in the following lemma.

\begin{lemma}\label{lem3-5}\ \nopagebreak\\[1.5ex]
\nopagebreak
(a) $\pi_{1}$, $\pi_{2}$, $\pi_{3}$ and $\pi_{4}$ are uniformly distributed on $\mathscr{P}_{n}$
and independent of $\underline{I}$.\\[1.5ex]
(b) $\pi_{1}$ and $(I_{1},\ldots,I_{4},J_{1},\ldots,J_{4})$ are independent.\\[1.5ex]
(c) $\pi_{2}$ and $(I_{1},I_{2},J_{1},J_{2})$ are independent.\\[1.5ex]
(d) $\pi_{3}$ and $(I_{1},J_{1})$ are independent.\\[1.5ex]
(e) $(I_{l},\pi_{k}(I_{l}))$ is uniformly distributed on $N^2$ for all $1 \leq l \leq 8$, $1  \leq k \leq 4$.\\[-1.5ex]
\end{lemma}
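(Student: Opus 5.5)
The plan is to reduce every statement to one structural observation about the construction. Write $\tau(\underline{I})$ for any of the permutations $u(\underline{I})$, $t(\underline{I})$, $s(\underline{I})$, all of which are $\sigma(\underline{I})$-measurable. The membership conditions defining $M_{4}$ and $M_{8}$ ((t1), (u1)--(u3)) involve only the \emph{equality pattern} of the coordinates. Hence they are invariant under applying a permutation $\sigma\in\mathscr{P}_{n}$ coordinatewise. Consequently the conditional laws in (\ref{eq3-4b}) and (\ref{eq3-4c}) are uniform on sets that are mapped onto themselves by such relabelings, and by (\ref{eq3-4d}) the whole law of $\underline{I}$ is invariant under $\underline{i}\mapsto(\sigma(i_{1}),\ldots,\sigma(i_{8}))$, since $\theta$ and $\gamma$ are relabeling invariant.

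For (a) I condition on $\underline{I}=\underline{i}$. Since $\pi_{1}$ is uniform and independent of $\underline{I}$, and $\pi_{k}=\pi_{1}\circ\rho_{k}(\underline{i})$ with $\rho_{k}(\underline{i})$ a fixed permutation (a product of $u,t,s$), the conditional law of $\pi_{k}$ given $\underline{I}=\underline{i}$ is uniform on $\mathscr{P}_{n}$. Because this conditional law does not depend on $\underline{i}$, $\pi_{k}$ is uniform and independent of $\underline{I}$.

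For (b), (c) and (d) I use one computation each, with the representation from Table \ref{tab2}.

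For (b), write $(J_{1},\ldots,J_{4})=\pi_{1}(I_{5},\ldots,I_{8})$. Then for fixed $\sigma$, $(i_{1},\ldots,i_{4})$ and $(j_{1},\ldots,j_{4})$,
\[
P\bigl(\pi_{1}=\sigma,\ (I_{1},\ldots,I_{4})=(i_{1},\ldots,i_{4}),\ (J_{1},\ldots,J_{4})=(j_{1},\ldots,j_{4})\bigr)
\]
equals
\[
\frac{1}{n!}\,P\bigl((I_{1},\ldots,I_{4})=(i_{1},\ldots,i_{4}),\ (I_{5},\ldots,I_{8})=\sigma^{-1}(j_{1},\ldots,j_{4})\bigr).
\]
By (\ref{eq3-4c}) this is $\frac{1}{n!}P((I_{1},\ldots,I_{4})=(i_{1},\ldots,i_{4}))\,\mathbf{1}\{\cdot\in S\}/|S|$, where $S$ is the fiber set. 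Whether $\sigma^{-1}(j_{1},\ldots,j_{4})\in S$ depends only on equalities among the $j$'s and on the equality pattern of $(i_{1},\ldots,i_{4})$. It does \emph{not} involve equalities between $i$'s and $j$'s: one checks that (u1)--(u3) relate $i_{5},\ldots,i_{8}$ only to each other, with conditions indexed by the pattern of $i_{1},\ldots,i_{4}$. So the expression is free of $\sigma$, which gives independence.

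For (c), Table \ref{tab2} gives $(J_{1},J_{2})=\pi_{2}(I_{3},I_{4})$. By (a), $\pi_{2}$ is independent of $\underline{I}$, so the same computation applies with (\ref{eq3-4b}) in place of (\ref{eq3-4c}), since (t1) is again a pure equality-pattern condition.

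For (d), $J_{1}=\pi_{3}(I_{2})$ and $I_{2}$ is uniform on $N$ independently of $I_{1}$ by (\ref{eq3-4a}). The computation is then trivial.

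For (e), $\pi_{k}$ is independent of $\underline{I}$ and uniform by (a), so $P(I_{l}=i,\pi_{k}(I_{l})=j)=P(I_{l}=i)/n$. It remains to show that $I_{l}$ is uniform. This follows from the relabeling invariance of the law of $\underline{I}$ noted above, because relabelings act transitively on $N$.

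The main obstacle is the bookkeeping in (b). There one must verify carefully that the fiber set in (\ref{eq3-4c}) is described by conditions that are invariant under relabeling the $\eta$'s alone, with the $i$'s held fixed. In particular one must confirm that no condition of the form $\eta_{k}=i_{l}$ is imposed. I would check this directly by reading (u1)--(u3) coordinate by coordinate.
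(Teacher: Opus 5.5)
Your proposal is correct and follows essentially the same route as the paper. Part (a) conditions on $\underline{I}$ and composes the uniform $\pi_1$ with a fixed permutation; parts (b)--(d) write the joint probability via the identities $J_l=\pi_k(I_m)$ from Table~\ref{tab2} and use that the uniform fibers in (\ref{eq3-4a})--(\ref{eq3-4c}) are invariant under relabeling the free coordinates, a check you make explicit where the paper only says ``due to (\ref{eq3-4c})''; part (e) combines relabeling invariance of the law of $\underline{I}$ with (a), the paper using the cyclic shift $\underline{i}\mapsto\underline{i}+(1,\ldots,1)\pmod n$ where you use arbitrary relabelings.
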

\begin{proof}
(a) For all $\pi \in \mathscr{P}_{n}$ and $\underline{i} \in M_{8}$ we have
\[P(\pi_{2} = \pi,\, \underline{I} = \underline{i}) = 
P(\pi_{1} = \pi \circ u(\underline{i})^{-1},\, \underline{I} = \underline{i}) =
\frac{1}{n!} P(\underline{I} = \underline{i}).
\]
Thus summation over all $\underline{i} \in M_{8}$ gives first the law of $\pi_{2}$ and then the
independence of $\pi_{2}$ and $\underline{I}$. The assertion for $\pi_{3}$ and $\pi_{4}$ follows
analogously.

(b) For all $\pi \in \mathscr{P}_{n}$ and $\underline{i} \in M_{8}$ we have
\[\begin{array}{l}
P\bigl( (I_{1}, \ldots, I_{4}, J_{1}, \ldots, J_{4}) = \underline{i},\,\pi_{1} = \pi \bigr)\\[1.2ex]
\ \ \ =\ P\Bigl( \underline{I} = \bigl(\,i_{1}, \ldots, i_{4}, \pi^{-1}(i_{5}), \ldots, \pi^{-1}(i_{8})\bigr)\Bigr)\,
P( \pi_{1} = \pi )\\[2.3ex]
\ \ \ =\ P( \underline{I} = \underline{i} ) P( \pi_{1} = \pi ).
\end{array}
\]
The last equation is valid due to (\ref{eq3-4c}).
Now summation over all $\pi \in \mathscr{P}_{n}$ gives that $(I_{1},\ldots,I_{4},J_{1},\ldots,J_{4})$ 
and $\underline{I}$ have the same law,
from which the assertion follows.

(c) Proceed as in (b) using $(I_{1},I_{2},J_{1},J_{2})$ and $\pi_{2}$ instead of 
$(I_{1},\ldots,I_{4},J_{1},\ldots,J_{4})$ and $\pi_{1}$. Furthermore, utilize
$J_{1} = \pi_{2}(I_{3})$, $J_{2} = \pi_{2}(I_{4})$ from Table \ref{tab2}
and (\ref{eq3-4b}) instead of (\ref{eq3-4c}).

(d) Proceed as in (b) using $(I_{1},J_{1})$ and $\pi_{3}$ instead of 
$(I_{1},\ldots,I_{4},J_{1},\ldots,J_{4})$ and $\pi_{1}$. Furthermore, utilize
$J_{1} = \pi_{3}(I_{2})$ from Table \ref{tab2}
and (\ref{eq3-4a}) instead of (\ref{eq3-4c}).

(e) From $\underline{i} \in M_{8} \Leftrightarrow \underline{i} + (1,1,\ldots,1) \in M_{8}\ (\text{mod}\ n)$ we conclude
that $I_{1},\ldots,I_{8}$ are uniformly distributed over $N$. Thus (e) follows easily using (a). 
\end{proof}

Next, we define
\begin{align}
\label{eq3-5a}
T_{k} &= \displaystyle{\sum_{j} a_{j\pi_{k}(j)}}\ \ \ \ \ \text{for}\,\, k = 1, 2, 3, 4,\\
\label{eq3-5b}
\Delta T_{k} &= T_{k+1} - T_{k}\ \ \ \ \ \text{for}\,\, k = 1, 2, 3.
\end{align}
From Table \ref{tab2} and the definition of $u(\underline{i})$, $t(\underline{i})$ and $s(\underline{i})$
we obtain
\begin{align}
&\Delta T_{1} = \displaystyle{\sum_{l = 1}^{8} \bigl( a_{I_{l}\pi_{2}(I_{l})} -
a_{I_{l}\pi_{1}(I_{l})} \bigr) \in \sigma( I_{1},\ldots,I_{8},J_{1},\ldots,J_{8} )},\label{eq3-6}\\
&\Delta T_{2} = \displaystyle{\sum_{l = 1}^{4} \bigl( a_{I_{l}\pi_{3}(I_{l})} -
a_{I_{l}\pi_{2}(I_{l})} \bigr) \in \sigma( I_{1},\ldots,I_{4},J_{1},\ldots,J_{4} )},\label{eq3-7}\\[0.5ex]
&\Delta T_{3} = \displaystyle{a_{I_{1}J_{1}} + a_{I_{2}J_{2}} - a_{I_{1}J_{2}} - a_{I_{2}J_{1}}
\in \sigma( I_{1},I_{2},J_{1},J_{2} )}.\label{eq3-8}
\end{align} 
Furthermore, a simple calculation gives
\begin{equation}
E( a_{I_{1}J_{1}} ) = 0,\ \ \ \ \ \ n E\bigl( a_{I_{1}J_{1}} \Delta T_{3} \bigr) = 1,\label{eq3-9}
\end{equation}
\begin{equation}
n \Bigl\{ E\bigl( a_{I_{1}J_{1}}\,\Delta T_{3}\,\Delta T_{2} \bigr)\, +\,
\frac{1}{2} E\bigl( a_{I_{1}J_{1}} \bigl(\Delta T_{3})^2 \bigr) \Bigr\} 
= \frac{1}{2} E\bigl(T_{\!A}^3\bigr).\label{eq3-10}
\end{equation}

Now we are able to prove the following basic equation.
\begin{lemma}\label{lem3-11}
Let $f$ and $q$ be connected as in (\ref{eq3-1}). Then
\begin{equation} \label{eq3-12}
E\bigl( q(T_{\!A}) \bigr) - \Phi( q )
+ \frac{1}{2} E\bigl( T_{\!A}^3 \bigr)\,E\bigl( T_{\!A}\,f'(T_{\!A}) \bigr) = R(q),
\end{equation}
where
\begin{align*}
R(q) = &\displaystyle{\dfrac{1}{2} E\bigl( T_{\!A}^3 \bigr) n
E\biggl( a_{I_{1}J_{1}} \Delta T_{3} \int\nolimits_{0}^{1}
\bigl( f''(T_{2} + \Delta T_{2} + t\,\Delta T_{3}) - f''(T_{2}) \bigr)\,dt \biggr)}\\[0.7ex]
&\displaystyle{-\ n
E\biggl( a_{I_{1}J_{1}} \Delta T_{3} \Delta T_{2} \int\nolimits_{0}^{1}
\bigl( f''(T_{1} + \Delta T_{1} + t\,\Delta T_{2}) - f''(T_{1}) \bigr)\,dt \biggr)}\\[0.7ex]
&\displaystyle{-\ n
E\biggl( a_{I_{1}J_{1}} ( \Delta T_{3} )^2 \int\nolimits_{0}^{1} (1-t)
\bigl( f''(T_{2} + \Delta T_{2} + t\,\Delta T_{3}) - f''(T_{2}) \bigr)\,dt \biggr)}.
\end{align*}
\end{lemma}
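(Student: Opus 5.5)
Starting from the Stein identity (\ref{eq3-3}) with $\xi=T_{\!A}$ (recall $a_{ij}=\hat a_{ij}$, so $T_{\!A}=\mathscr T_{\!A}$), it suffices to expand $E(T_{\!A}f(T_{\!A}))$. The plan is to remove the pieces $\Delta T_3$, $\Delta T_2$, $\Delta T_1$ one at a time by Taylor expansion, each time using the independence from Lemma \ref{lem3-5}.

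\emph{Step 1.} Since $\pi_4$ is uniform and independent of $\underline I$ (Lemma \ref{lem3-5}(a)), and $I_1$ is uniform on $N$, we have
\[
E(T_{\!A}f(T_{\!A}))=E(T_4 f(T_4))=nE\bigl(a_{I_1\pi_4(I_1)}f(T_4)\bigr)=nE\bigl(a_{I_1J_1}f(T_4)\bigr),
\]
because $\pi_4(I_1)=J_1$ by Table \ref{tab2}.

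\emph{Step 2.} Write $T_4=T_3+\Delta T_3$ and use
\[
f(T_3+\Delta T_3)=f(T_3)+\Delta T_3 f'(T_3)+(\Delta T_3)^2\int_0^1(1-t)f''(T_3+t\Delta T_3)\,dt .
\]
The term $nE(a_{I_1J_1}f(T_3))$ vanishes: by Lemma \ref{lem3-5}(d), $(I_1,J_1)$ is independent of $\pi_3$, and $E(a_{I_1J_1})=0$ by (\ref{eq3-9}).

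For the term $nE(a_{I_1J_1}\Delta T_3 f'(T_3))$, expand $f'(T_3)=f'(T_2+\Delta T_2)=f'(T_2)+\Delta T_2\int_0^1 f''(T_2+t\Delta T_2)\,dt$. By Lemma \ref{lem3-5}(c), $(I_1,I_2,J_1,J_2)$ is independent of $\pi_2$, so $nE(a_{I_1J_1}\Delta T_3 f'(T_2))=nE(a_{I_1J_1}\Delta T_3)E(f'(T_{\!A}))=E(f'(T_{\!A}))$ by (\ref{eq3-9}). This cancels the $E(f'(T_{\!A}))$ in (\ref{eq3-3}).

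\emph{Step 3.} Two second-order terms are left:
\[
nE\Bigl(a_{I_1J_1}\Delta T_3\Delta T_2\int_0^1 f''(T_2+t\Delta T_2)\,dt\Bigr)
\quad\text{and}\quad
nE\Bigl(a_{I_1J_1}(\Delta T_3)^2\int_0^1(1-t)f''(T_3+t\Delta T_3)\,dt\Bigr).
\]
In each, subtract and add $f''$ at a base point independent of the prefactor.

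\begin{itemize}
\item For the first term use $T_1$. Since $T_2=T_1+\Delta T_1$, the argument is $T_1+\Delta T_1+t\Delta T_2$. The prefactor $a_{I_1J_1}\Delta T_3\Delta T_2$ lies in $\sigma(I_1,\dots,I_4,J_1,\dots,J_4)$, which is independent of $\pi_1$ (Lemma \ref{lem3-5}(b)). The base term therefore factorizes as $nE(a_{I_1J_1}\Delta T_3\Delta T_2)E(f''(T_{\!A}))$.
\item For the second term use $T_2$. Since $T_3=T_2+\Delta T_2$, the argument is $T_2+\Delta T_2+t\Delta T_3$. Lemma \ref{lem3-5}(c) gives the factorization $\tfrac12 nE(a_{I_1J_1}(\Delta T_3)^2)E(f''(T_{\!A}))$, using $\int_0^1(1-t)\,dt=\tfrac12$.
\end{itemize}

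By (\ref{eq3-10}) the two base terms sum to $\tfrac12E(T_{\!A}^3)E(f''(T_{\!A}))$. This gives
\[
E(q(T_{\!A}))-\Phi(q)=-\tfrac12E(T_{\!A}^3)E(f''(T_{\!A}))+(\text{the last two terms of }R(q)),
\]
where the signs of the remainders follow from the minus sign in (\ref{eq3-3}).

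\emph{Step 4.} To reach the form (\ref{eq3-12}), rewrite $E(f''(T_{\!A}))$. Differentiating (\ref{eq3-2}) gives $f''=q'+f+xf'$. This is not directly $E(T_{\!A}f'(T_{\!A}))$, so instead apply the Step 2 argument to the function $f'$ in place of $f$:
\[
E(T_{\!A}f'(T_{\!A}))=nE\bigl(a_{I_1J_1}f'(T_4)\bigr).
\]
Expand $f'(T_4)=f'(T_3)+\Delta T_3\int_0^1 f''(T_2+\Delta T_2+t\Delta T_3)\,dt$, where $T_3=T_2+\Delta T_2$. The term $nE(a_{I_1J_1}f'(T_3))$ vanishes by Lemma \ref{lem3-5}(d). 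Subtracting and adding $f''(T_2)$ and using Lemma \ref{lem3-5}(c) with (\ref{eq3-9}) gives
\[
E(T_{\!A}f'(T_{\!A}))=E(f''(T_{\!A}))+nE\Bigl(a_{I_1J_1}\Delta T_3\int_0^1\bigl(f''(T_2+\Delta T_2+t\Delta T_3)-f''(T_2)\bigr)dt\Bigr).
\]
Substitute this for $E(f''(T_{\!A}))$ in the previous display. The result is exactly (\ref{eq3-12}), with the first term of $R(q)$ coming from this last correction.

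The computations (\ref{eq3-9}) and (\ref{eq3-10}) are given in the paper, so the main obstacle is bookkeeping: checking at each step that the prefactor is measurable with respect to exactly the $\sigma$-field that Lemma \ref{lem3-5}(b)–(d) declares independent of the relevant $\pi_k$, and keeping the signs and the $(1-t)$ weights straight.
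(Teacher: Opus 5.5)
Your proposal is correct and follows the paper's proof essentially step by step. It uses the same reduction $E(T_{\!A}f(T_{\!A}))=nE(a_{I_1J_1}f(T_4))$, the same Taylor expansions about $T_3$, $T_2$, $T_1$ with the factorizations from Lemma \ref{lem3-5}(b)--(d), (\ref{eq3-9}) and (\ref{eq3-10}), and the same final expansion of $E(T_{\!A}f'(T_{\!A}))$ to eliminate $E(f''(T_{\!A}))$. The only difference is cosmetic: you write integral remainders first and then subtract and add $f''$ at the base points, whereas the paper splits off the base-point terms directly in its expansions.
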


\begin{proof}
The equation (\ref{eq3-3}) with $\xi = T_{\!A}$ gives
\begin{equation} \label{eq3-13}
E\bigl(q(T_{\!A})\bigr) - \Phi(q) = E\bigl(f'(T_{\!A})\bigr) - E\bigl(T_{\!A} f(T_{\!A})\bigr).
\end{equation}
Now, using $J_{1} = \pi_{4}(I_{1})$ from Table \ref{tab2} and the independence of $\pi_{4}$ and $I_{1}$ we obtain
\begin{equation} \label{eq3-14}
E\bigl(T_{\!A} f(T_{\!A})\bigr) = E\bigl(T_{4} f(T_{4})\bigr) = n E\Bigl( a_{I_{1}J_{1}} f(T_{4}) \Bigr).
\end{equation}
A Taylor expansion of $f$ about $T_{3}$ yields further
\begin{align*}
=\; &n E\Bigl( a_{I_{1}J_{1}} f(T_{3}) \Bigr) + n E\Bigl( a_{I_{1}J_{1}} \Delta T_{3} f'(T_{3}) \Bigr)\\[0.5ex]
&+ \displaystyle{n
E\biggl( a_{I_{1}J_{1}} ( \Delta T_{3} )^2 \int\nolimits_{0}^{1} (1-t)
\bigl( f''(T_{2} + \Delta T_{2} + t\,\Delta T_{3}) - f''(T_{2}) \bigr)\,dt \biggr)}\\[1ex]
&+ \dfrac{n}{2} E\Bigl( a_{I_{1}J_{1}} (\Delta T_{3})^2 f''(T_{2}) \Bigr).
\end{align*}
The first summand is zero [cf. Lemma \ref{lem3-5}(d), (\ref{eq3-9})] and the last summand gives
[cf. Lemma \ref{lem3-5}(c), (\ref{eq3-8})]
\begin{equation*}
\dfrac{n}{2} E\Bigl( a_{I_{1}J_{1}} (\Delta T_{3})^2 f''(T_{2}) \Bigr) =
\dfrac{n}{2} E\Bigl( a_{I_{1}J_{1}} (\Delta T_{3})^2 \Bigr) E\bigl( f''(T_{2}) \bigr).
\end{equation*}
For the second summand we have
\begin{alignat*}{3}
&n &&E\Bigl( &&a_{I_{1}J_{1}} \Delta T_{3} f'(T_{3}) \Bigr)\\[0.7ex]
&\ &&=\ &&\!n E\Bigl( a_{I_{1}J_{1}} \Delta T_{3} f'(T_{2}) \Bigr)
+ n E\Bigl( a_{I_{1}J_{1}} \Delta T_{3} \Delta T_{2} f''(T_{1}) \Bigr)\\[0.5ex]
&\ &&\ &&\!\displaystyle{+\ n
E\biggl( a_{I_{1}J_{1}} \Delta T_{3} \Delta T_{2} \int\nolimits_{0}^{1}
\bigl( f''(T_{1} + \Delta T_{1} + t\,\Delta T_{2}) - f''(T_{1}) \bigr)\,dt \biggr)}\\[0.5ex]
&\ &&= &&\!E\bigl(f'(T_{2})\bigr) + n E\Bigl( a_{I_{1}J_{1}} \Delta T_{3} \Delta T_{2} \Bigr) E\bigl( f''(T_{1}) \bigr)
+ \text{last summand}
\end{alignat*}
[cf. Lemma \ref{lem3-5}(b), (c) and (\ref{eq3-7})$-$(\ref{eq3-9})].

Thus, using (\ref{eq3-10}) we conclude
\begin{equation} 
\begin{aligned}  \label{eq3-15}
E\bigl(T_{\!A} f(T_{\!A})\bigr) = &E\bigl(f'(T_{\!A})\bigr) + 
\dfrac{1}{2} E\bigl( T_{\!A}^3 \bigr)\,E\bigl(f''(T_{\!A})\bigr)\\[0.5ex]
&+\ \text{second term of $R(q)$} + \text{third term of $R(q)$}.
\end{aligned}
\end{equation}
The degree of differentiation of $f$ in the term $E\bigl(f''(T_{\!A})\bigr)$ will now be reduced by the
following consideration:
\begin{alignat*}{3}
&E\bigl(T_{\!A} f'(T_{\!A})\bigr) &&=\ &&n E\Bigl( a_{I_{1}J_{1}} f'(T_{4}) \Bigr)\\[0.5ex]
&\ &&= &&n E\Bigl( a_{I_{1}J_{1}} f'(T_{3}) \Bigr) +  n E\Bigl( a_{I_{1}J_{1}} \Delta T_{3} f''(T_{2}) \Bigr)\\[0.5ex]
&\ &&\ &&\displaystyle{+\ n
E\biggl( a_{I_{1}J_{1}} \Delta T_{3} \int\nolimits_{0}^{1}
\bigl( f''(T_{2} + \Delta T_{2} + t\,\Delta T_{3}) - f''(T_{2}) \bigr)\,dt \biggr)}\\[1ex]
&\ &&= &&E\bigl(f''(T_{2})\bigr)\\[0.5ex]
&\ &&\ &&\displaystyle{+\ n
E\biggl( a_{I_{1}J_{1}} \Delta T_{3} \int\nolimits_{0}^{1}
\bigl( f''(T_{2} + \Delta T_{2} + t\,\Delta T_{3}) - f''(T_{2}) \bigr)\,dt \biggr)}.
\end{alignat*}
Implementing this in (\ref{eq3-15}) and using (\ref{eq3-13}) we obtain the lemma. 
\end{proof}

\section{Proof of Theorem \ref{th2-1}} \label{sec4}
In addition to the assumptions of the last section ($a_{ij} = \hat{a}_{ij}$ for all $i$, $j$ and $q$ is bounded,
differentiable), we have to make some further assumptions.

First, we assume that $q \in \mathscr{D}$; see (\ref{eq2-2}). Furthermore, we may assume 
$\beta_{\!A} \leq \epsilon_{0} n$ und $n \geq n_{0}$ for arbitrary but fixed $0 < \epsilon_{0} \leq 1$
and $n_{0} \geq 10$. These constants will be specified later in Lemma \ref{lem4-1}. 
For $\beta_{\!A} > \epsilon_{0} n$ or $n < n_{0}$ we obtain (\ref{eq2-3}) from
\[
\displaystyle{\bigg| E(q(\mathscr{T}_{\!A})) - \int\nolimits_{\mathbb{R}} q e'_{1,A}\, dx \bigg| \leq ||q|| \bigl( 2 + \beta_{\!A}/n \bigr)}
\]
and $\beta_{\!A}/n \leq D_{\!A}$, $0 < c \leq \beta_{\!A}$, if we take $K_{1}$ large enough.

Moreover, we must assume $|a_{ij}| \leq 1$ for all $i$, $j$. We do not show here how this truncation is established
and refer the reader to \citet{Schneller2025ThesisV2}, Chapter 3, Section 4. For the basic ideas of this truncation
one may also consult \citet{Bolthausen1984}, pages 382 and 383. We mention that for this truncation
we eventually have to reduce the above $\epsilon_{0}$. 

We devide the following proof of Theorem 2.1 in two parts. The first part is 
\begin{lemma}\label{lem4-1}
There exist positive constants $c_{1}$, $c_{2}$ and $c_{3}$ such that
\begin{equation*}
|R(q)| \leq \bigl( c_{1}||q|| + c_{2}||q'|| +  c_{3}||q''|| \bigr) D^{2}_{\!A}.
\end{equation*}
\end{lemma}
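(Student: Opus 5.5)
We bound each of the three terms of $R(q)$ in Lemma \ref{lem3-11} separately. In each term we replace the increment of $f''$ by the mean value theorem, $|f''(x+h)-f''(x)|\le \|f'''\|\,|h|$. The first task is therefore to control $f'''$ for $f=\Theta q$. Differentiating the Stein equation (\ref{eq3-2}) gives $f''=q'+f+xf'$, and hence $f'''=q''+2f'+xf''$. The standard Stein estimates give $\|f\|,\|f'\|\le c\|q\|$ and $\|f''\|\le c\|q'\|$. For $\|xf''\|$ and $\|f'''\|$ we write $f''=\Theta(q')$ plus terms in $f$, since $(\Theta q)'$ solves the Stein equation with right-hand side $q'+f$. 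This should yield
\[
\|f'''\|\le c_1\|q\|+c_2\|q'\|+c_3\|q''\|.
\]
This is where all three norms enter. The alternative is to keep the term $x f''(x)$ and treat it as $|x|\,|q(x+y)-q(x)|$-type expressions, as the introduction alludes to in (\ref{eq4-8a}); we will simply use the bound on $\|f'''\|$.

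With this, $|R(q)|$ is bounded by $\|f'''\|$ times a combination of moments:
\[
|E(T_A^3)|\, n E\bigl(|a_{I_1J_1}\Delta T_3|(|\Delta T_2|+|\Delta T_3|)\bigr),\quad
n E\bigl(|a_{I_1J_1}\Delta T_3\Delta T_2|(|\Delta T_1|+|\Delta T_2|)\bigr),\quad
n E\bigl(|a_{I_1J_1}|\,|\Delta T_3|^3\bigr)
\]
(the last after absorbing $|\Delta T_2|$ terms). By (\ref{eq3-10}) and Hölder, $|E(T_A^3)|\le c\beta_A/n+\ldots\le cD_A$. So it suffices to show that every moment of the form
\[
n E\Bigl(|a_{I_1J_1}|\prod_{k}|\Delta T_{m_k}|\Bigr)
\]
with three factors $\Delta T$ is $\le cD_A^2=c\delta_A/n$, and that those with two factors are $\le cD_A$.

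Each $\Delta T_k$ is, by (\ref{eq3-6})--(\ref{eq3-8}), a sum of at most $16$ terms $a_{I_l\pi(I_l)}$, each of the form $a_{I_lJ_m}$. Expanding products and applying Hölder (generalized: $E|X_1\cdots X_4|\le\prod (E X_i^4)^{1/4}$) reduces everything to $nE(a_{I_lJ_m}^4)$, provided each such pair $(I_l,J_m)$ is (nearly) uniform on $N^2$. Uniformity is exactly Lemma \ref{lem3-5}(e) together with (\ref{eq3-4d}) when the pair is of the form $(I_l,\pi_k(I_l))$. For the mixed pairs such as $(I_1,J_2)$ and $(I_2,J_1)$ we use the explicit law (\ref{eq3-4d}): conditional densities are bounded by $c/n^2$ per pair since $n\ge n_0$, giving
\[
nE(a_{I_lJ_m}^4)\le c\,n\cdot n^{-2}\sum_{i,j}a_{ij}^4=c\,\delta_A/n.
\]
With three $\Delta$-factors plus $a_{I_1J_1}$ we get four factors, hence $D_A^2$ directly. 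With two factors we get $\beta_A/n\le D_A$ (via third moments), times $|E T_A^3|\le cD_A$, again giving $D_A^2$. The truncation $|a_{ij}|\le1$ is not even needed here, but the mixed terms involving $T_2$ appearing as $T_2+\Delta T_2$ cause no difficulty since only increments matter.

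The main obstacle is the rigorous verification that all index pairs $(I_l,J_m)$ occurring in $\Delta T_1,\Delta T_2$ have joint law with density $\le c/n^2$ relative to counting measure. The values of $\pi_2,\pi_3$ on $I_3,\dots,I_8$ are only implicitly defined (Tables \ref{tab1}, \ref{tab2}), so each appearing entry is some $a_{I_l J_m}$ with $J_m=\pi_1(I_{l'})$. We would handle this by a case distinction on coincidences among $I_1,\dots,I_8$ (governed by $\gamma,\theta$). On the event of no coincidences all entries are of the clean form, and coincidence events have probability $O(1/n)$; there we use $|a_{ij}|\le1$ and crude bounds, needing that these contributions are still $\le c\delta_A/n$ via $\delta_A/n\ge c n^{-1}$ from the displayed inequality $\sum|\hat a_{ij}|^k/n\ge 2^{-k/2}n^{1-k/2}$.
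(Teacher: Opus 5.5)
Your route is correct and differs from the paper's; its key step is a uniform bound on $f'''$, which you only sketch. The clean version is as follows. $f'$ is a bounded solution of $g'-xg=q'+f$, so by uniqueness of bounded solutions $f'=\Theta(q'+f)$ (and $\Phi(q'+f)=0$). It is $f'$ that equals $\Theta(\cdot)$ here, not $f''=\Theta(q')+\dots$ as you wrote. The classical estimate $\|(\Theta h)''\|\le 2\|h'\|$ for $h$ with bounded derivative then gives $\|f'''\|\le 2\|q''\|+2\|f'\|\le 2\|q''\|+8\|q\|$ by (\ref{eq4-5}).

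With this, each term of $R(q)$ is bounded by $\|f'''\|$ times either $|E(T_{\!A}^3)|\,nE\bigl(|a_{I_1J_1}||\Delta T_3|(|\Delta T_2|+|\Delta T_3|)\bigr)$ or a four-factor moment, and you are done. No conditioning is needed, and no truncation or $\beta_{\!A}\le\epsilon_0 n$; the $\|q'\|$ term even drops out.

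The paper instead uses only the weighted estimate (\ref{eq4-8a}), $|f''(x+y)-f''(x)|\le|y|(12\|q\|+\|q'\|+\|q''\|)(1+x^2)$. It therefore has to condition on $(\underline{I},\underline{J})$, represent $T_1$ conditionally as $a+T_B$, and bound $E((T_B+a)^2)$ via (\ref{eq4-9})--(\ref{eq4-10}). That is where $|a_{ij}|\le1$ and $\beta_{\!A}\le\epsilon_0 n$ enter.

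What the paper's detour buys is reusability in Lemma \ref{lem5-3}. For $q=q_z$ one only has $|q_z'(x+y)-q_z'(x)|\le|y|/D_{\!A}^2$, so a sup bound on $f_z'''$ would be of order $D_{\!A}^{-2}$ and useless. There the conditional representation $T_1\mid(\underline{I},\underline{J})\sim a+T_B$ is what lets condition (\ref{eq2-5}) for $B\in N(8,\hat{A})$ and Proposition \ref{prop5-7} enter.

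Your last paragraph, however, misdiagnoses the ``main obstacle'', and the fix you propose would not work. On a coincidence event of probability $O(1/n)$, crude bounds with $|a_{ij}|\le1$ give, after the prefactor $n$ in $R(q)$, a contribution of order $1$, not $O(D_{\!A}^2)$. The lower bound $\delta_{\!A}/n\ge 1/(4n)$ cannot absorb that.

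No case distinction is needed. Every summand in (\ref{eq3-6})--(\ref{eq3-8}) has the form $a_{I_l\pi_k(I_l)}$; by Table \ref{tab2} this includes $a_{I_1J_2}=a_{I_1\pi_3(I_1)}$, $a_{I_2J_1}=a_{I_2\pi_3(I_2)}$ and $a_{I_1J_1}=a_{I_1\pi_4(I_1)}$. Lemma \ref{lem3-5}(e) therefore gives exact uniformity on $N^2$ for every factor, whatever the coincidences among $I_1,\dots,I_8$. Hölder then yields $nE(|a_{I_1J_1}||\Delta T_k||\Delta T_l||\Delta T_m|)\le c\,\delta_{\!A}/n$ and $nE(|a_{I_1J_1}||\Delta T_k||\Delta T_l|)\le c\,\beta_{\!A}/n$. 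This is exactly how the paper obtains its moment bounds.
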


\begin{proof}
Let $\overline{I} = (I_{1},\ldots,I_{4})$ and $\overline{i}$, $\underline{I}$, $\underline{i}$ 
as in Section \ref{sec3}. Furthermore, let
$\overline{J}$, $\overline{j}$, $\underline{J}$, $\underline{j}$ be defined analogously to
$\overline{I}$, $\overline{i}$, $\underline{I}$, $\underline{i}$.

Because of Lemma \ref{lem3-5}(e) and (\ref{eq3-6})$-$(\ref{eq3-8}) we have for $k,l,m \in \{1, 2, 3\}$
\begin{align*}
n E\Bigl( |a_{I_{1}J_{1}}|\,|\Delta T_{k}|\,|\Delta T_{l}| \Bigr) &\leq c_{4} \beta_{A}/n\ \ \ \text{and}\\[0.5ex]  
n E\Bigl( |a_{I_{1}J_{1}}|\,|\Delta T_{k}|\,|\Delta T_{l}|\,|\Delta T_{m}| \Bigr) &\leq c_{5} D_{\!A}^{2},
\end{align*}
so that it remains to show that there exist
constants $c_{6},\ldots,c_{11}$ such that
\begin{equation} \label{eq4-2}
\begin{array}{@{}l@{}}
\displaystyle{\Big| E\Bigl( f''\bigl(\,T_{1} + \Delta T_{1} + t\,\Delta T_{2}\,\bigr)
- f''\bigl(\,T_{1}\,\bigr)\, \big|\,
\underline{I} = \underline{i},\, \underline{J} = \underline{j} \Bigr) \Big|}\\[1.5ex]
\hspace*{4.2ex}\displaystyle{\leq\ \bigl( c_{6}||q|| + c_{7}||q'|| +  c_{8}||q''|| \bigr) 
E\Bigl( | \Delta T_{1} | + | \Delta T_{2} |
\,\big|\,\underline{I} = \underline{i},\, \underline{J} = \underline{j} \Bigr)}\\[2ex]
\hspace*{4.2ex}\text{for}\,\, \text{all}\,\, 0 \leq t \leq 1\,\,\, \text{and}\,\,\, \underline{i}, 
\underline{j} \in M_{8}\,\,\,
\text{with}\,\,\, P\bigl( \underline{I} = \underline{i},\, \underline{J} = \underline{j} \bigr) > 0,
\end{array}
\end{equation}
\begin{equation} \label{eq4-3}
\begin{array}{@{}l@{}}
\displaystyle{\Big| E\Bigl( f''\bigl(\,T_{2} + \Delta T_{2} + t\,\Delta T_{3}\,\bigr)
- f''\bigl(\,T_{2}\,\bigr)\, \big|\,
\overline{I} = \overline{i},\, \overline{J} = \overline{j} \Bigr) \Big|}\\[1.5ex]
\hspace*{4.2ex}\displaystyle{\leq\ \bigl( c_{9}||q|| + c_{10}||q'|| +  c_{11}||q''|| \bigr) 
E\Bigl( | \Delta T_{2} | + | \Delta T_{3} |
\,\big|\,\overline{I} = \overline{i},\, \overline{J} = \overline{j} \Bigr)}\\[2ex]
\hspace*{4.2ex}\text{for}\,\, \text{all}\,\, 0 \leq t \leq 1\,\,\, \text{and}\,\,\, \overline{i}, 
\overline{j} \in M_{4}\,\,\,
\text{with}\,\,\, P\bigl( \overline{I} = \overline{i},\, \overline{J} = \overline{j} \bigr) > 0.
\end{array}
\end{equation}
Since the proofs of (\ref{eq4-2}) and (\ref{eq4-3}) are very similar, we only show 
(\ref{eq4-2}). 
For that we fix the quantities $\underline{i}$, $\underline{j}$ and $t$.

Now we look at the conditional distribution of $T_{1}$ given $\underline{I} = \underline{i}$,
$\underline{J} = \underline{j}$. $T_{1}$ depends only on $\pi_{1}$ and the conditional distribution of $\pi_{1}$
is easy to describe: $\pi_{1}$ takes any permutation $\varphi$ which satisfies $\varphi(i_{k}) = j_{k+4}$
for $1 \leq k \leq 4$ and $\varphi(i_{k}) = j_{k-4}$ for $5 \leq k \leq 8$ with equal probability.
Therefore, $T_{1}$ given $\underline{I} = \underline{i}$, $\underline{J} = \underline{j}$ has the same
law as
\[
\displaystyle{\sum\limits_{(i,\,j)\, \in\, S} a_{ij} + T_{B}},
\]
where
\[
S = \bigl\{ ( i_{k}, j_{k+4} ):\,1 \leq k \leq 4 \bigr\}\,
\cup\, \bigl\{ ( i_{k}, j_{k-4} ):\,5 \leq k \leq 8 \bigr\}
\]
and $B$ is the $(n-l)\,{\times}\,(n-l)$-matrix which is obtained from $A$ by cancelling the rows
$i_{1},\ldots,i_{8}$ and the columns $j_{1},\ldots,j_{8}$. Using the notations
\[\begin{array}{c}
a = \displaystyle{\sum\limits_{(i,\,j)\, \in\, S} a_{ij}},\ \ \ \ \ \ 
p = E\bigl( \Delta T_{1}\,\big|\,\underline{I} = \underline{i},\, \underline{J} = \underline{j} \bigr),\\[2.5ex]
r = E\bigl( \Delta T_{2}\,\big|\,\underline{I} = \underline{i},\, \underline{J} = \underline{j} \bigr)
\end{array}
\]
[note $\Delta T_{1}, \Delta T_{2} \in \sigma(\underline{I}, \underline{J})$!\,], it remains to show
\begin{equation} \label{eq4-4}
\begin{array}{@{}l@{}}
\displaystyle{\Big| E\Bigl( f''\bigl(\,T_{B} + a + p + tr\,\bigr)
- f''\bigl(\,T_{B} + a\,\bigr) \Bigr) \Big|}\\[2.5ex]
\hspace*{4.2ex}\displaystyle{\leq\ \bigl( c_{12}||q|| + c_{13}||q'|| + c_{14}||q''|| \bigr) 
\bigl( |p| + |r| \bigr)}.
\end{array}
\end{equation}
In order to prove (\ref{eq4-4}) we need the estimates
\begin{equation} \label{eq4-5}
||f|| \leq 4||q||\ \ \ \ \text{and}\ \ \ \ ||f'|| \leq 4||q||
\end{equation}
[\citet{Erickson1974}, Proposition 2.1]. Moreover, differentiation of (\ref{eq3-2}) gives
\begin{equation} \label{eq4-6}
f''(x) = f(x) + xf'(x) + q'(x),\ \ \ \ \ x \in \mathbb{R}.
\end{equation}
From (\ref{eq4-6}) and (\ref{eq4-5}) we obtain
\begin{equation} \label{eq4-7}
\begin{array}{c@{\hspace*{0.8ex}}l}
&\big| f''(x+y) - f''(x) \big|\\[2ex]
\leq&|y|8||q|| + |x|\big| f'(x+y) - f'(x) \big| + \big| q'(x+y) - q'(x) \big|.
\end{array}
\end{equation} 
Using (\ref{eq3-2}), (\ref{eq4-5}) and $|x| \leq 1 + x^2$ we estimate further
\begin{align}
\label{eq4-8}
\,\ &\leq |y|12||q||\{1 + x^2\} + |x|\big| q(x+y) - q(x) \big| + \big| q'(x+y) - q'(x) \big|\\[1.7ex]
\label{eq4-8a}
\,\ &\leq |y|\bigl( 12||q|| + ||q'|| + ||q''|| \bigr)\{1 + x^2\}.
\end{align} 
Therefore we have
\[
\begin{array}{@{}l@{}}
\displaystyle{\Big| E\Bigl( f''\bigl(\,T_{B} + a + p + tr\,\bigr)
- f''\bigl(\,T_{B} + a\,\bigr) \Bigr) \Big|}\\[2.5ex]
\hspace*{4.2ex}\displaystyle{\leq\ \bigl( 12||q|| + ||q'|| + ||q''|| \bigr) 
\bigl( |p| + |r| \bigr) \Bigl\{ 1 + E\bigl( (T_{B} + a)^2  \bigr) \Bigr\}}.
\end{array}
\]
It remains to estimate $E\bigl( (T_{B} + a)^2  \bigr)$. If we take the $0 < \epsilon_{0} \leq 1$
in $\beta_{\!A} \leq \epsilon_{0} n$ small enough and the $n_{0}$ in $n \geq n_{0}$ great enough, we can deduce 
as in \citet{Bolthausen1984}, page 385 after (3.11),
\begin{equation} \label{eq4-9}
|\mu_{B}| \leq 1,\ \ \ \ \ \frac{1}{2} \leq \sigma_{\!B}^{2} \leq \frac{3}{2}
\ \ \ \ \ \text{and}\ \ \ \ \ \beta_{B} \leq c \beta_{\!A}\ \ \ \ \text{for}\,\, B \in N(l,A).
\end{equation}
[The last inequality is needed in (\ref{eq5-4})]. Using $|a_{ij}| \leq 1$ we find further $|a| \leq 8$.
Therefore we obtain \phantom\qedhere  
\begin{equation} \label{eq4-10}              
E\bigl( (T_{B} + a)^2  \bigr) \leq 2 E\bigl( T_{B}^{2} \bigr) + 2 a^{2} = 
2 \bigl( \sigma_{\!B}^{2} + \mu_{B}^{2} \bigr) + 2 a^{2} \leq c.\rlap{$\qquad \hspace*{1.9ex} \Box$}
\end{equation}
\end{proof}

In the second part of this section we prove
\begin{lemma}\label{lem4-11}
There exist positive constants $c_{1}$ and $c_{2}$ such that
\[
\displaystyle{\bigg|\, \dfrac{1}{2} E\bigl( T_{\!A}^{3} \bigr) E\bigl( T_{\!A} f'(T_{\!A}) \bigr) -
\dfrac{1}{2} \lambda_{1,A} \Phi\bigl( x f'(x) \bigr) \bigg|
\leq \bigl( c_{1}||q|| + c_{2}||q'|| \bigr) D^{2}_{\!A}.
}
\]
\end{lemma}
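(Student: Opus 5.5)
We split the difference into two pieces:
\[
\tfrac12 E(T_{\!A}^3)\bigl[E(T_{\!A}f'(T_{\!A})) - \Phi(xf'(x))\bigr] + \tfrac12\bigl[E(T_{\!A}^3) - \lambda_{1,A}\bigr]\Phi(xf'(x)).
\]
For the second piece, note that $|\Phi(xf'(x))| \le 4\|q\|\,\Phi(|x|) \le c\|q\|$ by (\ref{eq4-5}). Next we compute $E(T_{\!A}^3)$ exactly, using the normalization $\sum_j a_{ij} = \sum_i a_{ij} = 0$ and the standard moment formulas for uniform permutations. The result should be
\[
E(T_{\!A}^3) = \lambda_{1,A}\,\frac{n^2}{(n-1)(n-2)} + \frac{c}{n(n-1)(n-2)}\sum_{i\ne k,\, j\ne l} a_{ij}a_{il}a_{kj}\ (\text{-type terms}),
\]
or, equivalently, $E(T_{\!A}^3) = \lambda_{1,A}(1+O(n^{-1})) + O(\text{lower order})$. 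The row and column sums vanish, so every mixed sum reduces to $\pm\sum a_{ij}^3$ or to $\sum_{i,j,l} a_{ij}a_{il}a_{\cdot}$-type sums. These can be bounded by Cauchy--Schwarz and Hölder in terms of $\delta_{\!A}$ and $n$. Since $|\lambda_{1,A}| \le \beta_{\!A}/n \le D_{\!A}$ and $n^{-1} \le 2D_{\!A}^2$ (from the inequality $\sum|\hat a_{ij}|^4/n \ge \tfrac14 n^{-1}$ stated in Section~\ref{sec2}), we obtain $|E(T_{\!A}^3) - \lambda_{1,A}| \le cD_{\!A}^2$. Alternatively, and more in the spirit of the paper, we can use (\ref{eq3-10}) together with Lemma~\ref{lem3-5}(e) and the truncation $|a_{ij}|\le1$ to identify $E(T_{\!A}^3)$ as $n$ times an explicit average over $\underline I,\underline J$, and compare it with $\lambda_{1,A}$ term by term.

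For the first piece, $|E(T_{\!A}^3)| \le c\beta_{\!A}/n \le cD_{\!A}$, by the computation above or by (\ref{eq3-10}) and the bound $nE(|a_{I_1J_1}||\Delta T_k||\Delta T_l|) \le c_4\beta_{\!A}/n$. It therefore suffices to show
\[
\bigl|E(T_{\!A}f'(T_{\!A})) - \Phi(xf'(x))\bigr| \le (c\|q\| + c'\|q'\|)\,D_{\!A}.
\]
This is a Berry--Esseen-type bound for the test function $h(x) = xf'(x)$. The function $h$ is unbounded, so Bolthausen's (\ref{eq1-1}) does not apply directly. We therefore repeat the Stein argument of Lemma~\ref{lem3-11} at first order.

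Let $g = \Theta(h - \Phi(h))$, so that $E h(T_{\!A}) - \Phi(h) = E g'(T_{\!A}) - E(T_{\!A}g(T_{\!A}))$. As in (\ref{eq3-14}), $E(T_{\!A} g(T_{\!A})) = nE(a_{I_1J_1}g(T_4))$. A first-order Taylor expansion about $T_3$ gives
\[
E(T_{\!A} g(T_{\!A})) = E g'(T_2) + nE\Bigl(a_{I_1J_1}\Delta T_3\int_0^1\bigl(g'(T_2+\Delta T_2+t\Delta T_3)-g'(T_2)\bigr)dt\Bigr),
\]
where we use Lemma~\ref{lem3-5}(c),(d) and (\ref{eq3-9}). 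The remainder is then controlled as in the proof of Lemma~\ref{lem4-1}. We condition on $\overline I,\overline J$, reduce to $T_B + a$ with $B\in N(4,A)$, and use (\ref{eq4-9}) and (\ref{eq4-10}) for second moments. We also need the increment estimate
\[
|g'(x+y) - g'(x)| \le c|y|(1+x^2)(\|q\|+\|q'\|).
\]

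\textbf{The main obstacle} is obtaining this smoothness estimate for $g$ with $\|q''\|$ absent. Here $h' = f' + xf''$, and by (\ref{eq4-6}) $xf'' = xf + x^2f' + xq'$. So $h$ has polynomial growth, with $|h(x)| \le 4\|q\||x|$ and $|h'(x)| \le c(1+x^2)(\|q\|+\|q'\|)$. For such $h$ one needs bounds of the form $|g(x)|, |g'(x)| \le c(1+|x|)^k$ and $|g''(x)| \le c(1+x^2)^{k'}$ in terms of $\sup|h'|/(1+x^2)$. I would derive these from the explicit representation (\ref{eq3-1}) and Mills-ratio estimates, in the standard way for polynomially growing test functions. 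The remainder then needs $E((T_B+a)^{2k})$ bounded, i.e. higher moments of $T_B$. These follow from $|a_{ij}|\le1$ together with (\ref{eq4-9}), using moment bounds for permutation statistics. The resulting remainder is $c\,nE(|a_{I_1J_1}||\Delta T_3|(|\Delta T_2|+|\Delta T_3|))\cdot(\ldots) \le c\beta_{\!A}/n \le cD_{\!A}$, which gives the claim.
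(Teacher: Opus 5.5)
Your proposal is correct and follows the paper's route. The $E(T_{\!A}^3)$ piece is handled by the exact formula $E(T_{\!A}^3)=\frac{n}{(n-1)(n-2)}\sum_{i,j}a_{ij}^3$; the mixed terms you hedge about in fact all vanish by double centering, and note that $n^{-1}\le 4D_{\!A}^2$, not $2D_{\!A}^2$. The other piece is a first-order Stein argument for $h(x)=xf'(x)$, which is exactly the paper's (\ref{eq4-12}) together with the increment bound (\ref{eq4-13}).

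The differences are only in bookkeeping. The paper puts $\lambda_{1,A}$ rather than $E(T_{\!A}^3)$ in front of $E(T_{\!A}f'(T_{\!A}))-\Phi(xf'(x))$, and it follows Bolthausen's construction rather than $\pi_2,\pi_3,\pi_4$. It also bounds the increment of $(\Theta h)'$ directly from the Stein equation and $\|\Theta h\|\le 3\|f'\|$, so it needs only the second moments (\ref{eq4-10}) rather than the higher moments of $T_B$ your route may require.
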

We remark that an easy calculation using (\ref{eq3-2}) shows
\[
\displaystyle{\Phi\bigl( x f'(x) \bigr) = \dfrac{1}{3} \int\nolimits_{\mathbb{R}} q(x) (3x - x^3) \psi(x) dx,}
\]
so that on the left-hand side of the basic equation (\ref{eq3-12}) we have
$E\bigl(q(T_{\!A})\bigr) - \int\nolimits_{\mathbb{R}} q e_{1,A}' dx$ up to a term smaller than a constant times $D_{\!A}^{2}$.

\begin{proof}[Proof of Lemma \ref{lem4-11}]
We have
\[
\begin{array}{@{}l@{}}
\displaystyle{\bigg|\, E\bigl( T_{\!A}^{3} \bigr) E\bigl( T_{\!A} f'(T_{\!A}) \bigr) -
\lambda_{1,A} \Phi\bigl( x f'(x) \bigr) \bigg|}\\[3.5ex]
\hspace*{4.2ex}\displaystyle{\leq E\bigl( \big|T_{\!A} f'(T_{\!A})\big| \bigr) \bigg| E\bigl(T_{\!A}^{3}\bigr) 
- \lambda_{1,A} \bigg|
+ \dfrac{1}{n} \beta_{\!A} \Big| E\bigl( T_{\!A} f'(T_{\!A}) \bigr) - \Phi\bigl( x f'(x) \bigr) \Big|}\\[3ex]
\hspace*{4.2ex}= A_{1} + A_{2}.
\end{array}
\]
To estimate $A_{1}$, we note that for $n \geq 10$
\begin{align*} 
E\bigl( T_{\!A}^{3} \bigr) = &\displaystyle{\dfrac{n}{(n-1)(n-2)} \sum_{i,j} a_{ij}^{3}},\\[0.5ex] 
\text{whereby}\ \dfrac{1}{n} \leq &\dfrac{n}{(n-1)(n-2)} \leq \dfrac{1}{n} + \dfrac{4}{n^2}
\end{align*}
(see \citet{hajek+sidak+sen1999}, Chapter 3, page 90, problem 27).
Using this and (\ref{eq4-5}) we obtain
\begin{equation*}
A_{1} \leq c_{3} ||q|| \beta_{\!A}/n^2 \leq c_{4} ||q|| \beta_{\!A}^{2}/n^2 \leq c_{4} ||q|| D_{\!A}^{2}.
\end{equation*}
For the estimation of $A_{2}$ we show the inequality
\begin{equation} \label{eq4-12}
\Big| E\bigl( T_{\!A} f'(T_{\!A}) \bigr) - \Phi\bigl( x f'(x) \bigr) \Big|
\leq \bigl( c_{5}||q|| + c_{6}||q'|| \bigr) \beta_{\!A}/n.
\end{equation}
In order to prove (\ref{eq4-12}) we define $\mathcal{f}(x) = (\Theta \mathcal{q})(x)$
where $\mathcal{q}(x) = xf'(x)$.
Proceeding as in \citet{Erickson1974} and using (\ref{eq4-5}) we find 
$||\mathcal{f}|| \leq 3 ||f'|| \leq 12||q||$.
From this and (\ref{eq3-2}) for $\mathcal{f}$ we conclude further
$|\mathcal{f}'(x)| \leq |x|16||q|| + 4||q||$ for all $x \in \mathbb{R}$. Thus, using the estimate
\[
\big| (x+y)f'(x+y) - xf'(x) \big| \leq |y| \bigl( 8||q|| + ||q'|| \bigr)\{1 + x^2\},
\ \ \ \ x, y \in \mathbb{R}
\]
[cf. proof of (\ref{eq4-7}), (\ref{eq4-8}) and (\ref{eq4-8a})], and again (\ref{eq3-2}) for $\mathcal{f}$, we have
\begin{equation} \label{eq4-13}
\big| \mathcal{f}'(x+y) - \mathcal{f}'(x) \big| \leq |y| \bigl( c_{7}||q|| + ||q'|| \bigr)
\bigl( 1 + x^2 \bigr) \bigl( 1 + |y| \bigr).
\end{equation}
Now we obtain (\ref{eq4-12}) if we proceed first of all as in \citet{Bolthausen1984}, page 383 bottom and
page 384 [with (\ref{eq4-13}) instead of Bolthausen's (2.5)] and then argue as in our proof of 
Lemma \ref{lem4-1}. Note that Bolthausen's $|\Delta T_{1}|$ and $|\Delta T_{2}|$ are bounded since we have 
$|a_{ij}| \leq 1$ for all $i, j$.
\end{proof}

\section{Proof of Theorem \ref{th2-4}} \label{sec5}
As in the last section we may assume $a_{ij} = \hat{a}_{ij}$ and
$|a_{ij}| \leq 1$ for all $i$, $j$; $\beta_{\!A} \leq \epsilon_{0} n$ und $n \geq n_{0}$ for 
fixed $0 < \epsilon_{0} \leq 1$ and $n_{0} \geq 10$.

In order to apply the results of Section \ref{sec3}, we must replace the discontinuous functions
$1_{(-\infty,\,z]}$, $z \in \mathbb{R}$, in $|| F_{\!A} - e_{1,A} ||$ by functions $q_{z}$, $z \in \mathbb{R}$,
that are bounded and differen-
\linebreak
tiable. We define for $z \in \mathbb{R}$,
\[
\displaystyle{q_{z}(x) =
\left\{
\begin{array}{ll@{}}
1& \hspace*{2ex}
\text{for}\ x \leq z,\\[1.8ex]
1 - \dfrac{1}{2}\,\bigl( (x - z)/D_{\!A} \bigr)^2& \hspace*{2ex}
\text{for}\ z \leq x \leq z + D_{\!A},\\[2.3ex]
\dfrac{1}{2}\,\bigl((z + 2 D_{\!A} - x)/D_{\!A}\bigr)^2& \hspace*{2ex}
\text{for}\ z + D_{\!A} \leq x \leq z + 2 D_{\!A},\\[2.8ex]
0 & \hspace*{2ex}
\text{for}\ z + 2 D_{\!A} \leq x\,.
\end{array}  \right.}
\]
Note that we have
\begin{equation} \label{eq5-1}
q_{z}'(x + y) - q_{z}'(x) = y \biggl( \dfrac{1}{D_{\!A}^{2}} \int_{0}^{1} 
\Bigl( 1_{(z + D_{\!A},\,z + 2D_{\!A}]} - 1_{(z,\,z + D_{\!A}]} \Bigr) (x + sy)\,ds \biggr)
\end{equation}
for all $x, y, z \in \mathbb{R}$. In this section (\ref{eq5-1}) together with (\ref{eq2-5}) will play the
role of 
\linebreak
$\big| q'(x+y) - q'(x) \big| \leq |y|\, ||q''||$ of the last section.

The further use of $q_{z}$ instead of $1_{(-\infty,\,z]}$ will be justified in the following lemma.
This lemma needs (\ref{eq2-5}) (but only for $B = \hat{A}$) for the first time.
\begin{lemma}\label{lem5-2}
\[
\big|\big| F_{\!A} - e_{1,A} \big|\big| \leq \sup\limits_{z \in \mathbb{R}}
\bigg| E\bigl(q_{z}(T_{\!A})\bigr) - \int\nolimits_{\mathbb{R}} q_{z} e'_{1,A}\, dx \bigg| + \bigl( C_{1} + 1 \bigr) D_{\!A}^{2}.
\]
\end{lemma}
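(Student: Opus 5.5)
Since we assume $a_{ij}=\hat a_{ij}$, we have $F_{\!A}=\mathscr{F}_{\!A}$. The plan is a standard sandwich (smoothing) argument in which the smoothing error is controlled by the second-difference condition (\ref{eq2-5}) rather than by a density bound. Fix $z\in\mathbb{R}$ and write $\Delta(z)=F_{\!A}(z)-e_{1,A}(z)$. Since $1_{(-\infty,z]}\le q_z\le 1_{(-\infty,z+2D_{\!A}]}$, the idea is to compare $F_{\!A}(z)$ with $E(q_z(T_{\!A}))$ and $\int q_z e'_{1,A}\,dx$, and to bound both comparison errors by $O(D_{\!A}^2)$. A pure monotonicity sandwich would lose a first-order term of size $D_{\!A}$. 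To avoid this we exploit the symmetric shape of $q_z$: it equals $1-\tfrac12((x-z)/D_{\!A})^2$ on $[z,z+D_{\!A}]$ and is the mirror image on $[z+D_{\!A},z+2D_{\!A}]$, so $q_z-1_{(-\infty,z+D_{\!A}]}$ is antisymmetric about $z+D_{\!A}$.

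First step. For any distribution function $G$ we write
\[
E_G(q_z)-G(z+D_{\!A})=\int_0^{D_{\!A}}\!\Bigl(\bigl[G(z+D_{\!A}+u)-G(z+D_{\!A})\bigr]-\bigl[G(z+D_{\!A})-G(z+D_{\!A}-u)\bigr]\Bigr)\,\frac{u}{D_{\!A}^2}\,du ,
\]
obtained by integrating by parts with $-q_z'(z+D_{\!A}\pm u)=(D_{\!A}-u)/D_{\!A}^2$, or an equivalent weight. The integrand is a second difference $\Delta^2_u G(z+D_{\!A}-u)$ with $0\le u\le D_{\!A}$. For $G=F_{\!A}$, condition (\ref{eq2-5}) with $B=\hat A$ bounds it by $C_1(D_{\!A}^2+u^2)\le 2C_1D_{\!A}^2$. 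Since the weight integrates to $O(1)$, this gives $|E(q_z(T_{\!A}))-F_{\!A}(z+D_{\!A})|\le cC_1D_{\!A}^2$. For $G=e_{1,A}$ the same identity applies, and $\Delta^2_u e_{1,A}$ is bounded by $u^2\|e_{1,A}''\|\le c\,u^2(1+|\lambda_{1,A}|)$. Using $|\lambda_{1,A}|\le\beta_{\!A}/n\le 1$ from the reduction, this yields $|\int q_z e'_{1,A}dx-e_{1,A}(z+D_{\!A})|\le cD_{\!A}^2$.

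Second step. Combining the two estimates, for every $z$ we get
\[
|\Delta(z+D_{\!A})|\le \bigl|E(q_z(T_{\!A}))-\textstyle\int q_z e'_{1,A}dx\bigr|+c(C_1+1)D_{\!A}^2 .
\]
Taking the supremum over $z$, and noting that $z\mapsto z+D_{\!A}$ is a bijection of $\mathbb{R}$, gives the lemma. The factor $(C_1+1)$ is obtained by tracking the constants; if needed, one absorbs constants by rescaling, since the later theorem only needs $K_4C_1+K_5$.

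The main obstacle is getting the integration-by-parts identity exactly right, so that the first-order terms cancel and only genuine second differences $\Delta^2_u G(\cdot)$ with step $u\le D_{\!A}$ remain. Condition (\ref{eq2-5}) only controls such differences, and only for $0\le y\le D_{\!A}$, so the bookkeeping must produce the base point $z+D_{\!A}-u$ and a step $u\in[0,D_{\!A}]$. I would verify this by writing $q_z(x)=1_{(-\infty,z+D_{\!A}]}(x)+h((x-z-D_{\!A})/D_{\!A})$ with $h$ odd and supported on $[-1,1]$, and then pairing the contributions of $x=z+D_{\!A}\pm u$. This is where the symmetric choice of $q_z$ is essential.
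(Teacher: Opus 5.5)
Your proof is the paper's argument up to the reparametrization $z\mapsto z+D_{\!A}$. The paper compares $F_{\!A}(z)$ with $E(q_{z-D_{\!A}}(T_{\!A}))$ and integrates by parts to obtain $\int_0^{D_{\!A}}\frac{D_{\!A}-y}{D_{\!A}^2}\,\Delta_y^2F_{\!A}(z-y)\,dy$. It then bounds this term by (\ref{eq2-5}) with $B=\hat A$, and the analogous term for $e_{1,A}$ by $y^2\|e_{1,A}''\|$.

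One correction: the weight in your displayed identity must be $(D_{\!A}-u)/D_{\!A}^2$, not $u/D_{\!A}^2$. That is what your own integration by parts with $-q_z'(z+D_{\!A}\pm u)=(D_{\!A}-u)/D_{\!A}^2$ produces, and with $u/D_{\!A}^2$ the identity is false (a $G$ with $\Delta_u^2G\equiv\kappa u^2$ gives $\kappa D_{\!A}^2/12$ on the left but $\kappa D_{\!A}^2/4$ on the right). With the correct weight the two errors are at most $\frac{7}{12}C_1D_{\!A}^2$ and $D_{\!A}^2$, so the constant $(C_1+1)$ in the lemma holds exactly and no rescaling is needed.
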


\begin{proof}
We use the abbreviation $D = D_{\!A}$. Then we have
\[
\begin{array}{l@{\hspace*{0.8ex}}c@{\hspace*{0.8ex}}l}
\Big| F_{\!A}(z) - E\bigl(q_{z-D}(T_{\!A})\bigr) \Big|&=
&\displaystyle{\bigg| \int\nolimits_{(z - D,\, z]} \dfrac{1}{2} 
\biggl( \dfrac{x - (z - D)}{D} \biggr)^{2}\,F_{\!A}(dx)}\\[3.5ex]
&&-\ \displaystyle{\int\nolimits_{(z,\, z + D]} \dfrac{1}{2} 
\biggl( \dfrac{x - (z + D)}{D} \biggr)^{2}\,F_{\!A}(dx) \bigg|}.
\end{array}
\]
Partial integration of each integral gives
\[
\begin{array}{l@{\hspace*{0.8ex}}c@{\hspace*{0.8ex}}l}
\hspace*{21.4ex}&=
&\displaystyle{\bigg| F_{\!A}(z) - \int\nolimits_{z-D}^{z} \dfrac{x - (z - D)}{D^2} F_{\!A}(x)\,dx}\\[3.5ex]
&&+\ \displaystyle{\int\nolimits_{z}^{z+D} \dfrac{x - (z + D)}{D^2} F_{\!A}(x)\,dx \bigg|}.
\end{array}
\]
Substitution of $y = z - x$ ($y = x - z$) in the first (second) integral leads to
\begin{align*}
\hspace*{21.4ex}&=
\displaystyle{\bigg| \int\nolimits_{0}^{D} \dfrac{D - y}{D^2} \Delta_{y}^{2} F_{\!A}(z - y)\,dy \bigg|}\\[2.5ex]  
&\leq
\displaystyle{\int\nolimits_{0}^{D} \dfrac{D - y}{D^2} C_{1}\bigl( D^{2} + y^{2} \bigr)\,dy 
\leq C_{1} D^{2}}.
\end{align*}
Similar computations give
\begin{equation*}
\displaystyle{\bigg| e_{1,A}(z) - \int\nolimits_{\mathbb{R}} q_{z - D}\, e'_{1,A}\, dx \bigg| =
\bigg| \int\nolimits_{0}^{D} \dfrac{D - y}{D^2} \Delta_{y}^{2} e_{1,A}(z - y)\,dy \bigg|}.
\end{equation*}
Now, using $\big|\big|\Delta_{y}^{2} e_{1,A}\big|\big| \leq y^2 \big|\big|e_{1,A}''\big|\big| \leq y^2 (1 + \beta_{\!A}/n)$
and $\beta_{\!A}/n \leq \epsilon_{0} \leq 1$, we obtain the lemma.
\end{proof}

The rest of the proof of Theorem \ref{th2-4} has many parallels to that of Theroem \ref{th2-1}.
\begin{lemma}\label{lem5-3}
There exist positive constants $c_{1}$ and $c_{2}$ such that
\[
\sup\limits_{z \in \mathbb{R}} |R(q_{z})| \leq \bigl( c_{1} C_{1} + c_{2} \bigr) D^{2}_{\!A}.
\]
\end{lemma}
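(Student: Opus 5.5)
We follow the scheme of the proof of Lemma \ref{lem4-1}, with $q=q_{z}$; note $||q_{z}||\le 1$ and $||q_{z}'||\le 1/D_{\!A}$, while $q_z''$ is unbounded, of size $D_{\!A}^{-2}$. As there, the moment bounds $nE(|a_{I_1J_1}||\Delta T_k||\Delta T_l|)\le c\beta_{\!A}/n$ (note $\frac12|E(T_{\!A}^3)|\le c$) and $nE(|a_{I_1J_1}||\Delta T_k||\Delta T_l||\Delta T_m|)\le cD_{\!A}^2$ reduce the claim to analogues of (\ref{eq4-2}) and (\ref{eq4-3}). Conditioning on $\underline I=\underline i,\underline J=\underline j$ (resp.\ $\overline I,\overline J$), and proceeding as before, it suffices to show
\[
\Big| E\bigl( f''(T_{B}+a+p+tr) - f''(T_{B}+a)\bigr)\Big| \le (c_1C_1+c_2)\bigl(|p|+|r|\bigr)
\]
uniformly in $z$, in $B\in N(8,\hat A)$, in $|a|\le 8$ and in bounded shifts $p,r$ (bounded because $|a_{ij}|\le1$). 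Set $y=p+tr$ and $x=T_B+a$. We bound $|f''(x+y)-f''(x)|$ by the three terms of (\ref{eq4-8}), without passing to (\ref{eq4-8a}).

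The first term, $|y|12||q||(1+x^2)$, is handled by (\ref{eq4-9}) and (\ref{eq4-10}) exactly as before.

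For the second term, $|x||q_z(x+y)-q_z(x)|$, we use that $q_z(x+y)-q_z(x)$ vanishes unless $x$ lies in an interval of length $|y|+2D_{\!A}$ near $z$, and is bounded by $\min(1,|y|/D_{\!A})$. If $|y|\ge D_{\!A}$, we bound the difference by $1$ and use a Berry--Esseen bound for $T_B$ (Proposition \ref{prop5-7}, i.e.\ (\ref{eq1-1}) applied to $B$, with $\beta_B\le c\beta_{\!A}$ from (\ref{eq4-9}) and the moment bound on $|x|$ via (\ref{eq4-10})). This gives $P(T_B+a\in I)\le c(|y|+D_{\!A}+\beta_{\!A}/n)\le c|y|$ for intervals $I$ of length $c(|y|+D_{\!A})$. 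If $|y|<D_{\!A}$, the difference is at most $|y|/D_{\!A}$ on an interval of length $3D_{\!A}$, whose probability is $\le cD_{\!A}$, again giving $\le c|y|$. The factor $|x|$ is harmless: we split according to $|x|\le |z|+c$ and use the normal tail, or more simply note that $|x|\cdot 1_{x\in I}$ has expectation $\le c\,P(\cdot)$ plus a Chebyshev tail if $|z|$ is large. The details follow Bolthausen's treatment.

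The third term, $|q_z'(x+y)-q_z'(x)|$, is the crux, and it is where (\ref{eq2-5}) enters. By (\ref{eq5-1}),
\[
E\bigl(q_z'(x+y)-q_z'(x)\bigr)=\frac{y}{D_{\!A}^{2}}\int_0^1\Bigl(P\bigl(x+sy\in(z+D_{\!A},z+2D_{\!A}]\bigr)-P\bigl(x+sy\in(z,z+D_{\!A}]\bigr)\Bigr)ds .
\]
The bracket equals $\Delta^2_{D_{\!A}}F_B(z-a-sy)$. Crucially, we keep the expectation inside before taking absolute values, which is legitimate since we only need $|E(\cdot)|$ in (\ref{eq4-4}). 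Then (\ref{eq2-5}) with $y=D_{\!A}$ gives $|\cdot|\le 2C_1D_{\!A}^2$, so the term is at most $2C_1|y|$.

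The main obstacle is this third term: we must ensure that the signed second difference really appears, with the $F_B$ for the correct $B\in N(8,\hat A)$, after conditioning, rather than an absolute value that would only give $|y|/D_{\!A}$. Combining the three bounds yields the analogue of (\ref{eq4-4}), and hence the lemma.
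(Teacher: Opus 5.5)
Your architecture matches the paper's. You split $f''=(f''-q_z')+q_z'$, which is legitimate since (\ref{eq4-6}) is an identity. You use absolute values and (\ref{eq4-5})--(\ref{eq4-8}) on the first part. For the $q_z'$ part you keep the expectation inside, so that (\ref{eq5-1}) yields the signed second difference $\Delta^2_{D_{\!A}}F_B(z-a-sy)$, bounded by $2C_1D_{\!A}^2$ via (\ref{eq2-5}); this is exactly the paper's term $A_3$.

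\textbf{The gap is in the middle term} $E\bigl(|x|\,|q_z(x+y)-q_z(x)|\bigr)$ with $x=T_B+a$. Your probability bounds $P(x\in I)\le c(|I|+\beta_{\!A}/n)$ are fine, but the claim that ``the factor $|x|$ is harmless'' is not. What you need is
\[
\sup_{w\in\mathbb{R}}E\bigl(|T_B+a|\,1_{(w,w+\ell]}(T_B+a)\bigr)\le c\,(\ell+\beta_{\!A}/n),
\]
and on the interval $|x|\approx|w|$. Neither of your suggested estimates achieves this uniformly in $w$:
\begin{itemize}
\item Bounding by $(|w|+c)P(x\in(w,w+\ell])$ multiplies the Berry--Esseen error $c\beta_B/(n-l)$, which does not decay in $w$, by $|w|$.
\item Chebyshev gives only $2E(x^2)/|w|$.
\end{itemize}
In the range $1\ll|w|\ll D_{\!A}^{-1}$ neither bound is $O(D_{\!A})$, and trading off against higher moments still loses a power of $D_{\!A}$. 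So (\ref{eq1-1}) plus moment bounds cannot give the required uniformity in $z$.

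\textbf{The fix} is to use Proposition \ref{prop5-7} as what it actually is. It is not ``(\ref{eq1-1}) applied to $B$'', but a bound on $E\bigl(|\mathscr{T}_B|1_{(-\infty,w]}(\mathscr{T}_B)\bigr)$ that is uniform in $w$, i.e.\ precisely the non-uniform substitute you need. Proceed as in (\ref{eq5-4}):
\begin{itemize}
\item write $T_B+a=\sigma_{\!B}\mathscr{T}_B+\mu_B+a$ and bound $|T_B+a|\le\sigma_{\!B}|\mathscr{T}_B|+|\mu_B|+|a|$;
\item apply Proposition \ref{prop5-7} to $B$ at both endpoints of the rescaled interval for the $|\mathscr{T}_B|$ part, and (\ref{eq1-1}) for the bounded part;
\item use (\ref{eq4-9}) to get $c(\ell+\beta_B/(n-l))\le c(\ell+D_{\!A})$.
\end{itemize}
Your case split ($|y|\ge D_{\!A}$ versus $|y|<D_{\!A}$) is then unnecessary. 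The bound $|q_z(x+y)-q_z(x)|\le|y|D_{\!A}^{-1}\int_0^1 1_{(z,z+2D_{\!A}]}(x+sy)\,ds$ gives the paper's $A_2\le c$ directly.

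\textbf{A smaller slip} concerns the first term of $R(q_z)$. It carries the factor $\frac12E(T_{\!A}^3)$ but only a second-order moment, $nE\bigl(|a_{I_1J_1}||\Delta T_3|(|\Delta T_2|+|\Delta T_3|)\bigr)\le c\beta_{\!A}/n$. Hence your remark $\frac12|E(T_{\!A}^3)|\le c$ yields only $(cC_1+c)\beta_{\!A}/n$, which is typically of order $n^{-1/2}$. You need $|E(T_{\!A}^3)|\le 2\beta_{\!A}/n$, which follows from $E(T_{\!A}^3)=\frac{n}{(n-1)(n-2)}\sum_{i,j}a_{ij}^3$. Then the product is $\le c(\beta_{\!A}/n)^2\le cD_{\!A}^2$.
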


\begin{proof}
We fix $z \in \mathbb{R}$. Therefore, for simplicity we drop the index $z$ of $q_{z}$ and $f_{z}$
and the index $A$ of $D_{\!A}$.

We adopt the proof of Lemma \ref{lem4-1} up to (\ref{eq4-4}) (with
$c_{\textbf{.}}||q|| + c_{\textbf{.}}||q'|| + c_{\textbf{.}}||q''||$ 
replaced by $c_{\textbf{.}}C_{1} + c_{\textbf{.}}$ in (\ref{eq4-2}), (\ref{eq4-3}) and (\ref{eq4-4})).
To prove the analogue of (\ref{eq4-4}), we consider
\begin{align*}
\Big| E\Bigl(f'&'\bigl(\,T_{B} + a + p + tr\,\bigr) - f''\bigl(\,T_{B} + a\,\bigr) \Bigr) \Big|\\[1.7ex]
\leq E\Bigl( \Big| (f&''-q')\bigl(\,T_{B} + a + p + tr\,\bigr) - (f''-q')\bigl(\,T_{B} + a\,\bigr) \Big| \Bigr)\\[1.7ex] 
&+ \Big| E\Bigl( q'\bigl(\,T_{B} + a + p + tr\,\bigr) - q'\bigl(\,T_{B} + a\,\bigr) \Bigr) \Big|.
\end{align*}
Now we use estimates similar to (\ref{eq4-5})$-$(\ref{eq4-8}) and then
$|q'(w)| \leq (1/D)1_{(z,\,z+2D]}(w)$ for all $w \in \mathbb{R}$ for the first summand, and
(\ref{eq5-1}) for the second summand. We get
\begin{align*}
\leq 12\,\bigl( |p| &+ |r| \bigr) \biggl\{ 1 + E\bigl( (T_{B} + a)^2  \bigr)\\[1.7ex]
&+ \displaystyle{\dfrac{1}{D} \int\nolimits_{0}^{1}
E\Bigl( \big|T_{B} + a\big|\, 1_{(z,\,z + 2D]}\bigl(\,T_{B} + a + sp + str\,\bigr)\Bigr)\,ds}\\[1.7ex]
&+ \displaystyle{\dfrac{1}{D^2} \int\nolimits_{0}^{1}
\Big|\,E\Bigl( \bigl(1_{(z + D,\,z + 2D]} - 1_{(z,\,z + D]}\bigr)
\bigl(\,T_{B} + a + sp + str\,\bigr)\Bigr)\,\Big|\,ds}\,\biggr\}\\[1.7ex]
= 12\,\bigl( |p| &+ |r| \bigr)\,\bigl\{ 1 + A_{1} + A_{2} + A_{3} \bigr\}.
\end{align*}
$A_{1}$ is estimated in (\ref{eq4-10}), so that it remains to estimate $A_{2}$ and $A_{3}$.

For $A_{2}$ we use (\ref{eq1-1}), the following Proposition \ref{prop5-7}, $|a| \leq 8$ and (\ref{eq4-9}):
\begin{equation} \label{eq5-4}
\begin{array}{@{}l@{\hspace*{0.1ex}}r@{\hspace*{0.3ex}}l@{}}
E\Bigl( \big|&T_{B}& +\, a\big|\, 1_{(z,\,z + 2D]}\bigl(\,T_{B} + a + sp + str\,\bigr)\Bigr)\\[2ex]
&=&\hspace*{0.2ex}E\Bigl( \big|\sigma_{\!B} \mathscr{T}_{B} + \mu_{B} + a\big|\, 1_{(z,\,z + 2D]}
\bigl(\,\sigma_{\!B} \mathscr{T}_{B} + \mu_{B} + a + sp + str\,\bigr)\Bigr)\\[2ex]
&\leq&c_{3}\Bigl( \bigl( \beta_{B}/(n-l)\bigr) + D \Bigr)\\[2.7ex]
&\leq&c_{4} D.
\end{array}
\end{equation}
This yields $A_{2} \leq c_{4}$. Finally, (\ref{eq2-5}) gives
\[
\Big|\,E\Bigl( \bigl(1_{(z + D,\,z + 2D]} - 1_{(z,\,z + D]}\bigr)
\bigl(\,T_{B} + a + sp + str\,\bigr)\Bigr)\,\Big|
\leq \big|\big|\Delta_{D}^{2} F_{B}\big|\big| \leq 2 C_{1} D^{2}.
\]
and so we have $A_{3} \leq 2C_{1}$. This proves the lemma.
\end{proof}

In order to complete the proof of Theorem \ref{th2-4} we need
\begin{lemma}\label{lem5-5}
There exists a positive constant $c$ such that
\[
\displaystyle{\sup\limits_{z \in \mathbb{R}} 
\bigg|\, \dfrac{1}{2} E\bigl( T_{\!A}^{3} \bigr) E\bigl( T_{\!A} f_{z}'(T_{\!A}) \bigr) -
\dfrac{1}{2} \lambda_{1,A} \Phi\bigl( x f_{z}'(x) \bigr) \bigg|
\leq c D^{2}_{\!A}.
}
\]
\end{lemma}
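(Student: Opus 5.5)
We follow the proof of Lemma \ref{lem4-11} with $q=q_z$ and $f=f_z=\Theta q_z$. The decomposition into $A_1+A_2$ is unchanged:
\[
\Big| E(T_{\!A}^3)E(T_{\!A}f'(T_{\!A}))-\lambda_{1,A}\Phi(xf'(x))\Big|\le E(|T_{\!A}f'(T_{\!A})|)\,\big|E(T_{\!A}^3)-\lambda_{1,A}\big| + \frac{\beta_{\!A}}{n}\Big|E(T_{\!A}f'(T_{\!A}))-\Phi(xf'(x))\Big|.
\]
Since $\|q_z\|=1$, (\ref{eq4-5}) gives $\|f_z'\|\le 4$ uniformly in $z$. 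The estimate $A_1\le c\,\beta_{\!A}/n^2\le c D_{\!A}^2$ therefore carries over verbatim, because it used only $\|q\|$. The whole difficulty lies in $A_2$. Here we cannot use (\ref{eq4-12}) directly, because it involves $\|q_z'\|=1/D_{\!A}$, which would only give $A_2\le c\,(\beta_{\!A}/n)(1+(\beta_{\!A}/n)/D_{\!A})$. That is still $\le cD_{\!A}^2$, since $\beta_{\!A}/n\le D_{\!A}$, but it loses uniformity only through constants. So the first thing to check is whether (\ref{eq4-12}) with $\|q'\|$ multiplied by $\beta_{\!A}/n$ already suffices.

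Indeed, (\ref{eq4-12}) yields
\[
A_2\le \frac{\beta_{\!A}}{n}\,(c_5+c_6D_{\!A}^{-1})\,\frac{\beta_{\!A}}{n}\le c_5D_{\!A}^2+c_6\,\frac{\beta_{\!A}}{n}\cdot\frac{\beta_{\!A}/n}{D_{\!A}}\le (c_5+c_6)D_{\!A}^2,
\]
using $\beta_{\!A}/n\le D_{\!A}$ twice. The plan is therefore simply to verify that (\ref{eq4-12}) holds for $q_z$ with constants independent of $z$. Its proof, via $\mathcal{q}(x)=xf'(x)$, $\mathcal{f}=\Theta\mathcal{q}$ and the bound (\ref{eq4-13}), used only $\|q\|$, $\|q'\|$ and the differentiability of $q$. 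The function $q_z$ is $C^1$ with $\|q_z\|=1$ and $\|q_z'\|\le 1/D_{\!A}$, and these are exactly the hypotheses needed: the bound $|(x+y)f'(x+y)-xf'(x)|\le|y|(8\|q\|+\|q'\|)(1+x^2)$ comes from (\ref{eq3-2}) and the mean value theorem applied to $q$ once, so no second derivative is required.

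The main obstacle is to confirm that the Bolthausen step behind (\ref{eq4-12}) needs only a Lipschitz-type bound (\ref{eq4-13}) on $\mathcal{f}'$, with no second derivative of $q$ entering. If it does need one, we would instead treat $|x||q_z(x+y)-q_z(x)|$ as in Lemma \ref{lem5-3}. That is, we would use $|q_z(x+y)-q_z(x)|\le 1_{(z-|y|,z+2D+|y|]}(x)$-type bounds together with (\ref{eq1-1}) and Proposition \ref{prop5-7}, as in (\ref{eq5-4}), to obtain $|E(T_{\!A}f'(T_{\!A}))-\Phi(xf'(x))|\le c\,\beta_{\!A}/n$ directly. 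This route would give $A_2\le c(\beta_{\!A}/n)^2\le cD_{\!A}^2$ without the factor $1/D_{\!A}$. Finally we take the supremum over $z$, noting that every constant is $z$-free.
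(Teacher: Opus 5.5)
There is a genuine gap, and it sits in the step you treat as routine. With $\|q_z'\| = D_{\!A}^{-1}$, (\ref{eq4-12}) gives
\[
A_2 \le c_5\Bigl(\frac{\beta_{\!A}}{n}\Bigr)^2 + c_6\,\frac{(\beta_{\!A}/n)^2}{D_{\!A}} .
\]
Applying $\beta_{\!A}/n\le D_{\!A}$ twice to the second term gives $D_{\!A}\cdot D_{\!A}/D_{\!A}=D_{\!A}$, not $D_{\!A}^2$. To reach $D_{\!A}^2$ you would need $\beta_{\!A}/n\le cD_{\!A}^2$, which is false in general. In the typical case $|\hat a_{ij}|\asymp n^{-1/2}$ one has $\beta_{\!A}/n\asymp D_{\!A}\asymp n^{-1/2}$. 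There (\ref{eq4-12}) applied to $q_z$ only says that $E(T_{\!A}f_z'(T_{\!A}))-\Phi(xf_z'(x))$ is $O(1)$, so your bound for $A_2$ is of order $n^{-1/2}$ instead of $n^{-1}$. The real obstacle is therefore not whether the Bolthausen step needs $q''$. It does not, since (\ref{eq4-13}) uses only $\|q\|$ and $\|q'\|$. The problem is that $\|q_z'\|$ enters (\ref{eq4-12}) linearly and is of size $D_{\!A}^{-1}$. What you need is a bound for $\sup_z|E(T_{\!A}f_z'(T_{\!A}))-\Phi(xf_z'(x))|$ that does not involve $\|q_z'\|$. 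Because this quantity is multiplied by $\beta_{\!A}/n\le D_{\!A}$, order $D_{\!A}$ would already suffice; the paper proves the stronger bound (\ref{eq5-6}) of order $\beta_{\!A}/n$.

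The paper derives (\ref{eq5-6}) from Proposition \ref{prop5-7}. By (\ref{eq3-2}), $xf_z'(x)=x\,q_z(x)-x\bigl(\Phi(q_z)-xf_z(x)\bigr)$. Both $q_z$ and $\Phi(q_z)-xf_z$ are nonincreasing with values in $[0,1]$, hence mixtures of indicators $1_{(-\infty,w]}$. Writing $x1_{(-\infty,w]}(x)$ as a combination of $|x|1_{(-\infty,w]}(x)$ and $|x|1_{(-\infty,0]}(x)$, the difference is controlled uniformly in $z$ by Proposition \ref{prop5-7}.

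Your fallback can also be made to work, but it has to be the main argument, not a contingency. Redo the Bolthausen-type expansion for $\mathcal{f}=\Theta(xf_z')$. In $|\mathcal{f}'(x+y)-\mathcal{f}'(x)|$, isolate the term
\[
|x|\,|q_z(x+y)-q_z(x)|\le |x|\,|y|\,D_{\!A}^{-1}\int_0^1 1_{(z,z+2D_{\!A}]}(x+sy)\,ds .
\]
Its conditional expectation given the index variables is bounded via (\ref{eq5-4}), exactly as $A_2$ in the proof of Lemma \ref{lem5-3}. Note that the bare indicator bound you wrote, without the factor $|y|D_{\!A}^{-1}$, yields only $c(D_{\!A}+\beta_{\!A}/n)$, not the $c\beta_{\!A}/n$ you claim. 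That weaker bound still suffices for this lemma.

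Compared with the paper, this route avoids the monotonicity fact about $\Phi(q_z)-xf_z$ at the cost of repeating the Stein expansion. Both arguments ultimately rest on (\ref{eq1-1}) and Proposition \ref{prop5-7}.
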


\begin{proof}
If we proceed as in the proof of Lemma \ref{lem4-11}, we see that it remains
to prove the following analogue of (\ref{eq4-12}): \phantom\qedhere  
\begin{equation} \label{eq5-6}
\sup\limits_{z \in \mathbb{R}}
\Big| E\bigl( T_{\!A} f_{z}'(T_{\!A}) \bigr) - \Phi\bigl( x f_{z}'(x) \bigr) \Big|
\leq c_{1} \beta_{\!A}/n.
\end{equation}
Using (\ref{eq3-2}) and the fact that the functions $q_{z}(x)$ and $\Phi(q_{z}) - x f_{z}(x)$ are
monotone decreasing between 0 and 1 [cf. \citet{Schneller2025ThesisV2}, Lemma 2.1.13(a)], 
we can deduce (\ref{eq5-6}) from the following general
result which is interesting in its own right and which was already needed for proving (\ref{eq5-4}).
\end{proof}

\begin{proposition}\label{prop5-7}
There exists a positive constant $c$ such that for all $A$ with $\sigma_{\!A} > 0$,
\[
\displaystyle{\sup\limits_{z \in \mathbb{R}} 
\Big|\, E\Bigl( |\mathscr{T}_{\!A}| 1_{(-\infty,\,z]}(\mathscr{T}_{\!A})\Bigr)
- \Phi\Bigl( |x| 1_{(-\infty,\,z]}(x)\Bigr)\,\Big|
\leq c \beta_{\!A}/n}.
\]
\end{proposition}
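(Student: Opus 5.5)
We prove Proposition \ref{prop5-7} with Stein's method in the form (\ref{eq3-3}), combined with Bolthausen's inductive combinatorial argument, as used for (\ref{eq1-1}). As in Section \ref{sec4}, we may assume $a_{ij} = \hat a_{ij}$, $|a_{ij}| \le 1$ (after the truncation of \citet{Bolthausen1984}), $\beta_{\!A} \le \epsilon_0 n$ and $n \ge n_0$. Otherwise the bound is trivial, since $E|\mathscr{T}_{\!A}| \le 1$ and $\beta_{\!A}/n$ is bounded below by a constant.

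\textbf{Step 1: reduction to monotone test functions.} Write $g_z(x) = |x| 1_{(-\infty,z]}(x)$. We split $g_z = x^- 1_{(-\infty,\min(z,0)]} + x^+ 1_{(0,z]}$. Each piece is a difference of terms of the form $h(x)1_{(-\infty,w]}(x)$ with $h(x)=x$. The term $E(\mathscr{T}_{\!A} 1_{(-\infty,w]}(\mathscr{T}_{\!A}))$ can be treated directly by Stein's identity: take $q_w = 1_{(-\infty,w]}$ and $f_w = \Theta q_w$. By (\ref{eq3-2}),
\[ x\,1_{(-\infty,w]}(x) = x f_w'(x) - x^2 f_w(x) + \Phi(q_w)\,x . \]
It would be more economical, however, to run the Bolthausen induction directly on the class of functions $h\,1_{(-\infty,w]}$ with $h(x) = x$.

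\textbf{Step 2: the Bolthausen induction.} Define $\delta(\gamma,n)$ as the supremum of the left-hand side over all matrices with $\beta_{\!A}\le\gamma$. Apply (\ref{eq3-3}) with $q = g_z$ and $f = \Theta g_z$. Then, as in (\ref{eq3-14}), use the exchange construction $T_{\!A}\to T_4 = T_3 + \Delta T_3$, $\Delta T_3\in\sigma(I_1,I_2,J_1,J_2)$, together with the independence statements in Lemma \ref{lem3-5}(c),(d). This gives
\[ E f'(T_{\!A}) - E(T_{\!A}f(T_{\!A})) = E f'(T_{\!A}) - nE\big(a_{I_1J_1}\Delta T_3\!\int_0^1 f'(T_3+t\Delta T_3)\,dt\big). \]
The function $f=\Theta g_z$ satisfies $\|f\|,\|f'\|\le c$ and $|f(x)|\le c(1+|x|)^{-1}$-type bounds; these follow by the arguments of \citet{Erickson1974}, since $g_z$ has linear growth only against the Gaussian weight. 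Moreover, $f'$ is Lipschitz up to a jump term,
\[ |f'(x+y)-f'(x)| \le c|y|(1+x^2) + (1+|z|)\,1_{[z-|y|,z+|y|]}(x), \]
which comes from (\ref{eq3-2}) and the jump of $g_z$ at $z$ of size $|z|$. The smooth part contributes $c\beta_{\!A}/n$ by the moment estimates of Lemma \ref{lem4-1}. The jump part requires bounding $(1+|z|)P(T_B + a \in [z-\eta,z+\eta])$ for submatrices $B\in N(l,A)$, conditioned as in the proof of Lemma \ref{lem4-1}.

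\textbf{Step 3: the main obstacle is the concentration term with the weight $1+|z|$.} For this term we use (\ref{eq1-1}) for $\mathscr{T}_B$, with (\ref{eq4-9}) controlling $\mu_B,\sigma_B,\beta_B$. This yields $P(T_B+a\in[z-\eta,z+\eta]) \le c(\eta + \beta_{\!A}/n)$. The factor $|z|$ is absorbed by splitting into $|z|\le K$, where it is harmless, and $|z|>K$. In the latter case we use a non-uniform tail bound: Chebyshev with $E T_B^2\le c$ gives $P(|T_B+a|\ge |z|/2)\le c z^{-2}$, which beats the linear factor. More precisely, we must show $|z|\,P(T_B\in[z-\eta,z+\eta])\le c(\eta+\beta_{\!A}/n)$. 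I would get this by applying the induction hypothesis $\delta$ itself to $B$: the quantity $E(|\mathscr{T}_B|1_{(z-\eta,z+\eta]}(\mathscr{T}_B))$ is a difference of two expressions of the type in the proposition, plus a Gaussian term of size $c\eta$. Collecting terms gives a recursion
\[ \delta(\gamma,n)\le c\gamma/n + \tfrac12\,\delta(c\gamma,n-8), \]
with the factor $\tfrac12$ coming from the smallness of $E|a_{I_1J_1}||\Delta T_3|\cdot n\le c\beta_{\!A}/n\le c\epsilon_0$. Iterating this recursion exactly as in \citet{Bolthausen1984} closes the argument. Finally, the passage from $\mathscr{T}_{\!A}$ to $T_{\!A}$ through (\ref{eq4-9}) only changes constants.
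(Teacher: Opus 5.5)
Your framework (Stein's identity, the exchange $T_{\!A}\to T_4$, conditioning so that $T_2$ behaves like $T_B+a$ with $B\in N(l,A)$, and feeding an induction hypothesis for the weighted functional back in for $B$) is the right one. However, the induction does not close as you set it up.

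You work with the unsmoothed $g_z$, so the jump part $(1+|z|)1_{[z-|y|,\,z+|y|]}(x)$ of your bound for $f'(x+y)-f'(x)$ carries no factor $|y|$. In the remainder $nE\bigl(a_{I_1J_1}\Delta T_3\int_0^1 (f'(T_2+\Delta T_2+t\Delta T_3)-f'(T_2))\,dt\bigr)$ it is therefore multiplied by $nE\bigl(|a_{I_1J_1}|\,|\Delta T_3|\bigr)$. This quantity is not $\le c\beta_{\!A}/n$ as you claim: by (\ref{eq3-9}) it is at least $nE(a_{I_1J_1}\Delta T_3)=1$. You then bound the weighted concentration via the induction hypothesis for $B$, which is a difference of two expressions of the proposition's type. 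This gives $\delta(\gamma,n)\le c\gamma/n+C\,\delta(c\gamma,n-l)$ with $C\ge 2$, and that recursion cannot be iterated. Nor can the weight $|z|$ be absorbed non-recursively. Combining the uniform bound from (\ref{eq1-1}) with any tail bound loses a factor that grows as $\beta_{\!A}/n\to 0$; for instance, Chebyshev gives only $c(\eta+\beta_{\!A}/n)^{1/2}$.

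The missing ingredient is a free smoothing parameter. The paper replaces $g_z$ by $h_{z,\lambda}(x)=|x|\{((1+(z-x)/\lambda)\wedge 1)\vee 0\}$. Since $|x|\ge 0$, one has $h_{z-\lambda,\lambda}\le |x|1_{(-\infty,z]}\le h_{z,\lambda}$ and $\Phi(|x|1_{(z,z+\lambda]}(x))\le c\lambda$, so the smoothing costs only $c\lambda$. For $f=\Theta h_{z,\lambda}$ the jump is replaced by $|y|\,\frac{|x|}{\lambda}\int_0^1 1_{(z,z+\lambda]}(x+sy)\,ds$, which does carry $|y|$. By the moment bound $nE(|a_{I_1J_1}||\Delta T_k||\Delta T_l|)\le c\beta_{\!A}/n$ and the induction hypothesis, its contribution is at most $\frac{c\beta_{\!A}}{n\lambda}\bigl(\lambda+\beta_{\!A}/n+\delta\bigr)$. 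Choosing $\lambda=K\beta_{\!A}/n$ with $K$ large yields the factor $\frac12$.

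A secondary omission concerns the reduction to $|a_{ij}|\le 1$. Because of the weight $|\mathscr{T}_{\!A}|$, this is not simply Bolthausen's truncation. The paper needs the extra estimate (\ref{eq5-8}), in particular $E\bigl(|T_{\overbar{A}}|1_{\{\mathscr{T}_{\!A}\neq T_{\!A'}\}}\bigr)\le c\beta_{\!A}/n$, which it obtains from a uniform bound on $E(|T_{\overbar{A}}|\mid \pi(i)=j)$.
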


\begin{proof}[Sketch of Proof]
Proceed as in \citet{Bolthausen1984} with the quantities
\begin{align*}
h(x)&= h_{z,\lambda}(x) = |x|\Bigl\{ \bigl( (1 + (z - x)/\lambda) \wedge 1 \bigr) \vee 0 \Bigr\},\\[1.5ex] 
f(x)&= f_{z,\lambda}(x) = \bigl( \Theta h_{z,\lambda} \bigr) (x)
\end{align*}
and note that Bolthausen's $T_{\!A}$ is our $\mathscr{T}_{\!A}$.

We make two remarks. First, as in the proof of (\ref{eq4-12}) we find positive constants $c_{1}$, $c_{2}$, $c_{3}$
with $||f|| \leq c_{1}$ and $|f'(x)| \leq c_{2}|x| + c_{3}$ for all $x \in \mathbb{R}$. Instead of
Bolthausen's (2.5) we, therefore, have to use
\[
\displaystyle{\big| f'(x+y) - f'(x) \big| \leq |y| \bigg\{\,c_{4}(1 + x^2)(1 + |y|) + \dfrac{|x|}{\lambda}
\int\nolimits_{0}^{1} 1_{(z,\, z + \lambda ]}(x + sy)\,ds\,\bigg\}}.
\]
Our second remark concerns Bolthausen's (3.6). In order to prove our analogue
we define $a_{ij}' = \hat{a}_{ij} 1_{\{|\hat{a}_{ij}|\,\leq\, 1/2\}}(\hat{a}_{ij})$, 
$\overline{a}_{ij} = \widehat{(a')}_{ij}$ and $\Gamma = \{(i,j): |\hat{a}_{ij}|  > 1/2 \}$.\\ 
\mbox{\rule[0ex]{0ex}{2.8ex}We need the estimate}
\begin{equation} \label{eq5-8}
\displaystyle{\Big|\, E\Bigl( |\mathscr{T}_{\!A}| 1_{(-\infty,\,z]}(\mathscr{T}_{\!A})\Bigr)
- E\Bigl( |T_{\overbar{A}}| 1_{(-\infty,\,z]}(T_{\!A'})\Bigr)\,\Big|
\leq c_{5} \beta_{\!A}/n}.
\end{equation}
From this we obtain the following analogue of Bolthausen's (3.6):
\begin{align*}
\displaystyle{\sup\limits_{z} 
\Big|}&E\displaystyle{\Bigl( |\mathscr{T}_{\!A}| 1_{(-\infty,\,z]}(\mathscr{T}_{\!A})\Bigr)
- \Phi\Bigl( |x| 1_{(-\infty,\,z]}(x)\Bigr)\,\Big|}\\[1.5ex]
&\leq \displaystyle{\sup\limits_{z} \Big|\,E\Bigl( |T_{\overbar{A}}| 1_{(-\infty,\,z]}(T_{\!A'})\Bigr)
- \Phi\Bigl( |x| 1_{(-\infty,\,z]}(x)\Bigr)\,\Big| + c_{5} \beta_{\!A}/n}\\[1.5ex]
&\leq \displaystyle{\sup\limits_{z} 
\Big|\,E\Bigl( |T_{\overbar{A}}| 1_{(-\infty,\,z]}(T_{\overbar{A}})\Bigr)
- \Phi\Bigl( |x| 1_{(-\infty,\,z]}(x)\Bigr)\,\Big|}\\[1ex]
&\quad + \displaystyle{\sup\limits_{z}
\Big|\,\Phi\Bigl( |x| 1_{(-\infty,\,(z - \mu_{A'})/\sigma_{\!A'}]}(x)\Bigr)
- \Phi\Bigl( |x| 1_{(-\infty,\,z]}(x)\Bigr)\,\Big|  + c_{5} \beta_{\!A}/n}.
\end{align*}
Clearly, the second summand is of order $\beta_{\!A}/n$ [cf. Bolthausen's (3.2) and (3.3)] and
therefore our analogue is established with the exception of (\ref{eq5-8}):
\begin{align*}
\Big|\, E\Bigl( |\mathscr{T}_{\!A}|&1_{(-\infty,\,z]}(\mathscr{T}_{\!A})\Bigr)
- E\Bigl( |T_{\overbar{A}}| 1_{(-\infty,\,z]}(T_{\!A'})\Bigr)\,\Big|\\[2ex]
\leq\,\, &\Big|\, E\Bigl( |\mathscr{T}_{\!A}| 1_{(-\infty,\,z]}(\mathscr{T}_{\!A})\Bigr)
- E\Bigl( |T_{\overbar{A}}| 1_{(-\infty,\,z]}(\mathscr{T}_{\!A})\Bigr)\,\Big|\\[2ex]
&+ \Big|\, E\Bigl( |T_{\overbar{A}}| 1_{(-\infty,\,z]}(\mathscr{T}_{\!A})\Bigr)
- E\Bigl( |T_{\overbar{A}}| 1_{(-\infty,\,z]}(T_{\!A'})\Bigr)\,\Big|\\[2ex]
\leq\,\, &E\Bigl( \big|\mathscr{T}_{\!A} - T_{\overbar{A}}\big| \Bigr)
+ E\Bigl( |T_{\overbar{A}}| 1_{\{\mathscr{T}_{\!A} \not= T_{\!A'}\}} \Bigr) = B_{1} + B_{2}.
\end{align*}
Furthermore, we have
\begin{align*}
B_{1} &\leq \displaystyle{E\Bigl( \big|\mathscr{T}_{\!A} - T_{\!A'}\big| \Bigr)
+ E\Bigl( \big| \sigma_{\!A'}T_{\overbar{A}} + \mu_{A'} - T_{\overbar{A}}\big| \Bigr)}\\[1.5ex]
&\leq \displaystyle{\sum\limits_{i} E\Bigl( \big| \hat{a}_{i\pi(i)} - a_{i\pi(i)}' \big| \Bigr) 
+ c_{6}\beta_{\!A}/n}\\[1.2ex]
&= \displaystyle{(1/n) \sum\limits_{(i,j) \in \Gamma} |\hat{a}_{ij}| + c_{6}\beta_{\!A}/n
\leq c_{7}\beta_{\!A}/n},\\[1.7ex]
B_{2} &\leq \displaystyle{\sum\limits_{i} E\Bigl( |T_{\overbar{A}}|1_{\Gamma}\bigl(i,\pi(i)\bigr)\Bigr)}\\[1ex]
&= \displaystyle{\sum\limits_{i,j} (1/n)1_{\Gamma}(i,j) E\Bigl( |T_{\overbar{A}}|\, \big|\, \pi(i) = j \Bigr)}.
\end{align*}
If we argue as in \citet{Bolthausen1984}, page 385, lines 3$-$14 (cf. also our proof of Lemma \ref{lem4-1}),
we find a constant $c_{8}$ with $E\Bigl( |T_{\overbar{A}}|\, \big|\, \pi(i) = j \Bigr) \leq c_{8}$
for all $i$, $j$. Therefore, we have 
$B_{2} \leq c_{8}|\Gamma|/n \leq 8c_{8}\beta_{\!A}/n$.
\end{proof}

\begin{rem57} \label{rem5-9}
In \citet{Schneller2025ThesisV2}, Chapter 3, Section 6, a complete, but a bit different proof of 
Proposition \ref{prop5-7} may be found for the case $|\hat{a}_{ij}| \leq 1$.
\end{rem57}

\section{Proof of Theorem \ref{th2-12}(a)} \label{sec6}
In order to prove Theorem \ref{th2-12}(a) we use a result of \citet{vanZwet1982}, Theorem 2.1, which gives an
estimate of the characteristic function of $\mathscr{T}_{\!A}$ under the conditions (\ref{eq2-13})$-$(\ref{eq2-15}).
From this we will deduce (\ref{eq2-5}) with $C_{1} \approx (\log n)^{2}$.

We need some preliminaries. Let $U$ be a distribution function on $\mathbb{R}$ which has a density $u$ that is 
infinitely differentiable and has a support which is contained in $[-1,1]$. It follows 
[cf. \citet{feller1971introduction}, Chapter 15, Section 4, Lemma 4] that
\begin{equation} \label{eq6-1}
\displaystyle{|\hat{U}(t)| = {\scriptstyle \bigO}(|t|^{-n})\ \ \ \text{for}\,\, |t| \rightarrow \infty\,\,
\text{and}\,\, \text{all}\,\, n \in \mathbb{N}},
\end{equation}
where we denote by $\hat{G}$ the characteristic function of a distribution with distribution function $G$.

From (\ref{eq6-1}) we conclude
\begin{equation} \label{eq6-2}
\displaystyle{\int\nolimits_{\mathbb{R}} |t|^{n} |\hat{U}(t)|\,dt < \infty,\ \ \ \ \
\int\nolimits_{\mathbb{R}} |\hat{U}(t)|^{n}\,dt < \infty\ \ \ \text{for}\,\, \text{all}\,\, n \in \mathbb{N}}.
\end{equation}

Using this, we can prove
\begin{lemma} \label{lem6-3}
Let $F$ be a distribution function on $\mathbb{R}$ and
\[
\displaystyle{U_{\theta}(x) = U(x/\theta)\ \ \ \text{for}\,\, x \in \mathbb{R}\,\, \text{and}\,\, \theta > 0}.
\]
Then
\begin{equation} \label{eq6-4}
\displaystyle{\big|\Delta_{y}^{2} F(z)\big| \leq (y^2 + \theta) \int\nolimits_{\mathbb{R}}
(1 + |t|) \big| \hat{F}(t) \hat{U}_{\theta}(t) \big|\,dt}
\end{equation}
for all $z \in \mathbb{R}$, $y > 0$ and $\theta > 0$.
\end{lemma}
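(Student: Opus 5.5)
The plan is to smooth $F$ by convolving it with $U_\theta$. The smoothed function's second differences are controlled through Fourier inversion. The error made by smoothing is controlled by a sandwich argument that uses only the fact that $U_\theta$ is supported in $[-\theta,\theta]$.

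\emph{Smoothing and Fourier inversion.} Set $G = F * U_\theta$, that is, $G(x) = \int U_\theta(x-s)\,F(ds)$. Since $U_\theta$ has the infinitely differentiable density $u_\theta(x) = \theta^{-1}u(x/\theta)$, $G$ has the smooth density $g(x) = \int u_\theta(x-s)\,F(ds)$. Its characteristic function is $\hat G = \hat F\,\hat U_\theta$. Because $\hat U_\theta(t) = \hat U(\theta t)$, (\ref{eq6-2}) shows that both $\hat F\hat U_\theta$ and $t\,\hat F\hat U_\theta$ are integrable. Fourier inversion therefore gives
\[
\|g\| \le \frac{1}{2\pi}\int_{\mathbb{R}} |\hat F(t)\hat U_\theta(t)|\,dt,\qquad
\|g'\| \le \frac{1}{2\pi}\int_{\mathbb{R}} |t|\,|\hat F(t)\hat U_\theta(t)|\,dt .
\]
Differentiation under the integral is justified by the same integrability.

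\emph{Sandwich.} Since $U_\theta$ is a distribution function concentrated on $[-\theta,\theta]$ and $F$ is nondecreasing, we have
\[
G(x-\theta) \le F(x) \le G(x+\theta)\qquad\text{for all } x .
\]
Applying the upper bound at $z$ and $z+2y$ and the lower bound at $z+y$ gives
\[
\Delta^2_y F(z) \le G(z+2y+\theta) - 2G(z+y-\theta) + G(z+\theta).
\]
We rewrite the right-hand side as
\[
\Delta^2_y G(z+\theta) + 2\bigl[G(z+y+\theta) - G(z+y-\theta)\bigr].
\]
The first term is $\int_0^y\!\int_0^y g'(z+\theta+s+r)\,ds\,dr$, so it is at most $y^2\|g'\|$ in absolute value. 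The second term is at most $4\theta\|g\|$. The symmetric argument, using the lower bounds at $z$ and $z+2y$ and the upper bound at $z+y$, gives
\[
\Delta^2_y F(z) \ge -\,y^2\|g'\| - 4\theta\|g\| .
\]

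\emph{Combining.} Together these give
\[
|\Delta^2_y F(z)| \le y^2\|g'\| + 4\theta\|g\| .
\]
Inserting the Fourier bounds and using $4/(2\pi) < 1$, the right-hand side is at most
\[
(y^2+\theta)\int_{\mathbb{R}}(1+|t|)\,|\hat F(t)\hat U_\theta(t)|\,dt ,
\]
which is (\ref{eq6-4}).

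The only real subtlety is in the sandwich step. Comparing $F$ with $G$ directly fails, because near jumps of $F$ the difference $F-G$ is of order one. The shifts by $\pm\theta$ have to be placed so that the remainder consists of first differences of $G$ over intervals of length $2\theta$. That is what turns the smoothing error into $\theta\|g\|$ rather than an $O(1)$ term.
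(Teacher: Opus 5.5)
Your proof is correct and follows the same route as the paper: smoothing with $U_\theta$ (von Bahr's technique), the sandwich $G(x-\theta)\le F(x)\le G(x+\theta)$, and Fourier inversion applied to the smoothed law $\hat G=\hat F\hat U_\theta$. The only difference is cosmetic. The paper writes the upper bound as an integral of indicator functions against the density and splits the resulting exponential kernel on the Fourier side, treating $y\ge 2\theta$ and $y<2\theta$ separately. Your identity $\Delta^2_y G(z+\theta)+2\bigl[G(z+y+\theta)-G(z+y-\theta)\bigr]$ performs the same split on the physical side and needs no case distinction.
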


\begin{proof}
We use a technique introduced by \citet{vonBahr1967}, Section 3. Let $F_{\theta}$ be the convolution of 
$F$ and $U_{\theta}$. Then $F_{\theta}$ has a density $f_{\theta}$ and
\[
F_{\theta}(x - \theta) \leq F(x) \leq F_{\theta}(x + \theta)\ \ \ \text{for}\,\, \text{all}\,\,
x \in \mathbb{R},\,\, \theta > 0.
\]
Using this and the Plancherel identity, we obtain for $y \geq 2\theta$,
\begin{align*}
F( z &+ 2y ) - 2F( x + y ) + F(z)\\[2ex]
&\leq F_{\theta}( z + 2y + \theta) - 2 F_{\theta}( z + y - \theta) + F_{\theta}( z + \theta)\\[1.5ex]
&\leq \displaystyle{\bigg| \int\nolimits_{\mathbb{R}}
\Bigl( 1_{(z + y - \theta,\, z + 2y + \theta ]}(t) - 
1_{(z + \theta,\, z + y - \theta ]}(t) \Bigr) f_{\theta}(t)\,dt \bigg|}\\[2.5ex]
&= \displaystyle{\dfrac{1}{2 \pi}\, \bigg| \int\nolimits_{\mathbb{R}}
\biggl( \int\nolimits_{z + y - \theta}^{z + 2y + \theta} e^{- its}\,ds -
\int\nolimits_{z + \theta}^{z + y - \theta} e^{- its}\,ds \biggr) \hat{F}_{\theta}(t)\,dt \bigg|}\\[3ex]
&\leq \displaystyle{\dfrac{1}{2 \pi}\, \int\nolimits_{\mathbb{R}}
\dfrac{1}{|t|} \Big|\, e^{- it(z + 2y)} - 2 e^{- it(z + y)} + e^{- itz}\,\Big|\,\big|\hat{F}_{\theta}(t)\big|\,dt}\\[2.5ex]
&\quad + \displaystyle{\theta \int\nolimits_{\mathbb{R}} \big|\hat{F}_{\theta}(t)\big|\,dt}\\[2.5ex]
&\leq \displaystyle{y^{2}\! \int\nolimits_{\mathbb{R}} |t| \big|\hat{F}_{\theta}(t)\big|\,dt
+ \theta \int\nolimits_{\mathbb{R}} \big|\hat{F}_{\theta}(t)\big|\,dt}\\[2.5ex]
&\leq \displaystyle{(y^2 + \theta) \int\nolimits_{\mathbb{R}}
(1 + |t|) \big| \hat{F}(t) \hat{U}_{\theta}(t) \big|\,dt = :R}.
\end{align*}
For the last inequality we used $\hat{F}_{\theta} = \hat{F} \hat{U}_{\theta}$. In the case $y < 2\theta$
we proceed similarly. Furthermore, we obtain $-\bigl( F( z + 2y ) - 2F( x + y ) + F(z) \bigr) \leq R$
by completely analogous arguments.
\end{proof}

The assumption (\ref{eq2-5}) contains conditions for $B \in N(8,\hat{A})$. For this reason we need the estimate of van Zwet not only for $\hat{\mathscr{F}}_{A}$ but also for $\hat{F}_{B}$, $B \in N(8,\hat{A})$.
\begin{lemma} \label{lem6-5}
Let $A = (e_{i}d_{j})$ be an $n\,{\times}\,n$-matrix fulfilling the conditions 
(\ref{eq2-13})$-$(\ref{eq2-15})
and 
\[
n \geq n_{1} = \max\{n_{0},\,10,\,32/\delta\},\ \ \beta_{\!A} \leq \epsilon_{0} n,
\] 
where $n_{0}$ and $\epsilon_{0}$ are chosen such that $\frac{1}{2} \leq \sigma_{\!B}^2 \leq \frac{3}{2}$ 
holds for $B \in N(8,\hat{A})$ [cf. (\ref{eq4-9})].
Then there exist positive constants $b_{1}$, $b_{2}$, $b_{3}$ and $b_{4}$ depending only on
$e$, $E$, $d$, $D$, $\delta$ and $r$, $k$, $m$, $s$ such that
\begin{equation} \label{eq6-6}
\big| \hat{F}_{B}(t) \big| \leq b_{1} n^{- b_{2}\log n}\ \ \ \ 
\text{for}\,\,\, b_{3}\log n \leq |t| \leq b_{4}n^{3/2}\,\, \text{and}\,\,\, B \in N(8,\hat{A}).
\end{equation}
\end{lemma}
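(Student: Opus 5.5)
The plan is to apply van Zwet's Theorem 2.1 (\citet{vanZwet1982}) to every $B \in N(8,\hat{A})$, not only to $\hat{A}$ itself. The key structural fact is that for $A = (e_i d_j)$ the standardized matrix $\hat{A}$ again has product form, $\hat{a}_{ij} = (e_i - \bar{e})(d_j - \bar{d})/\sigma_{\!A}$. Cancelling $l \le 8$ rows and $l$ columns of $\hat{A}$ therefore gives a matrix $B = (\tilde{e}_i \tilde{d}_j)$ of the same type, of size $n' = n - l \ge n - 8$. Since $F_B$ is the law of $T_B = \sum_i \tilde{e}_i \tilde{d}_{\pi(i)}$, $\hat F_B(t)$ is the characteristic function of a linear rank statistic with regression constants $\tilde{e}_i$ and scores $\tilde{d}_j$. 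I would first recentre: $T_B = \sigma_{\!B}\mathscr{T}_B + \mu_B$, so $|\hat F_B(t)| = |\hat{\mathscr{F}}_B(\sigma_{\!B} t)|$. By (\ref{eq4-9}) the factor $\sigma_{\!B}$ lies in $[2^{-1/2}, (3/2)^{1/2}]$, so it only rescales the $t$-range by constants. The whole task is thus to verify (\ref{eq2-13})$-$(\ref{eq2-15}) for the reduced data, with constants depending only on $e,E,d,D,\delta,r,k,m,s$, and then to quote van Zwet's bound.

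The moment conditions come first. After deleting at most 8 indices the means change: $|\bar{\tilde e} - \bar e| \le 16\max_i|e_i-\bar e|/n$. From $\sum|e_i-\bar e|^k \le En$ we get $\max_i|e_i - \bar e| \le (En)^{1/k}$, which is $o(n)$ because $k>2$. Hence the upper bounds $\sum |\tilde e_i - \bar{\tilde e}|^k \le c E n'$ survive, by Minkowski's inequality. For the lower bound $\sum|\tilde e_i - \bar{\tilde e}|^r \ge c\, e n'$, one first uses Hölder between $r$ and $k$. This shows that the mass is not concentrated on a few indices: any 8 terms contribute at most $8(En)^{r/k} = o(n)$. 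The shift of the mean perturbs the sum by $o(n)$ as well. The same argument handles $d$. Because the constants must be uniform, I would make the $o(n)$ statements explicit, e.g.\ $\le c n^{r/k}$, and absorb small $n$ into $n \ge n_1$, enlarging $n_1$ if necessary. The normalization by $\sigma_{\!A}$ multiplies the $e$'s by a common factor bounded above and below, since $\sigma_{\!A}^2 \asymp n^{-1}\sum(e_i-\bar e)^2\sum(d_j-\bar d)^2$ lies between constants times $n$ under these conditions. This factor can be put on either side and causes no problem.

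The lattice condition (\ref{eq2-15}) for the reduced scores is next. Removing at most 8 of the $d_j$ removes at most 8 intervals of length $2\zeta$, i.e.\ Lebesgue measure at most $16\zeta$. So the measure is at least $(\delta n - 16)\zeta \ge (\delta/2) n \zeta \ge (\delta/2) n'\zeta$ for $n \ge 32/\delta$; this is exactly the choice of $n_1$. Rescaling the scores by a constant factor changes $\zeta$ by a constant, and I would handle this by adjusting $\delta$ and the admissible range $\zeta \ge n^{-3/2}\log n$ up to constants, again absorbed into $n_1$.

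With all three conditions verified uniformly over $B \in N(8,\hat A)$, van Zwet's Theorem 2.1 yields $|\hat{\mathscr F}_B(t)| \le b_1 n^{-b_2\log n}$ on a range $c_1\log n' \le |t| \le c_2 n'^{3/2}$. Since $n' \asymp n$ and $\sigma_{\!B}$ is bounded above and below, this translates into (\ref{eq6-6}) after adjusting $b_3, b_4$. I expect the main obstacle to be the bookkeeping of exactly how van Zwet's constants and $t$-range depend on the parameters, including his formulation of (\ref{eq2-15}) in terms of the scale of the scores. One must check that each perturbation (deleting indices, recentring, renormalizing) only changes the parameters by bounded factors, so that a single set $b_1,\dots,b_4$ works. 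I would try to read off from van Zwet's proof that his bound depends monotonically on $e,E,d,D,\delta$, so that replacing them by $e/2$, $2^kE$, and so on, is legitimate.
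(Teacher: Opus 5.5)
Your proposal is correct and follows the paper's route. The paper's proof, which defers the details to \citet{Schneller2025ThesisV2}, likewise applies Theorem 2.1 of \citet{vanZwet1982} to every $B \in N(8,\hat{A})$ after checking that the reduced, recentred $e$'s and $d$'s satisfy (\ref{eq2-13})$-$(\ref{eq2-15}) with new constants depending only on the original ones; the threshold $n \geq 32/\delta$ in $n_{1}$ matches your estimate for (\ref{eq2-15}) after deleting at most $8$ scores.

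One small slip: since $\sigma_{\!A} \asymp n^{1/2}$, it is not a bounded factor and cannot be moved onto the $d$'s without damaging (\ref{eq2-15}). This is harmless, because $\mathscr{T}_{B}$ is invariant under positive rescaling of $B$, so the scores are never rescaled. What does need a line is the admissible range $\zeta \geq (n-l)^{-3/2}\log(n-l)$ versus $\zeta \geq n^{-3/2}\log n$; monotonicity of the measure in $\zeta$ handles it at the cost of a constant in $\delta$.
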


\begin{proof}
Use Theorem 2.1 of \citet{vanZwet1982} and proceed as in the 
proof of \citet{Schneller2025ThesisV2}, Proposition 4.3.9, pages 160$-$164. 

We remark that \citet{Bolthausen1984} uses, in the proof of our (\ref{eq4-9}), essentially $|\hat{a}_{ij}| \leq 1$.
If one uses $\sum_{i} \hat{a}_{ij} = 0 = \sum_{j} \hat{a}_{ij}$ and $|\hat{a}_{ij}| \leq 1 + |\hat{a}_{ij}|^{3}$
instead of $|\hat{a}_{ij}| \leq 1$, one can show our (\ref{eq4-9}) as well
[for more details see \citet{Schneller2025ThesisV2}, Chapter 3, Section 3]. 
\end{proof}
Our last lemma uses the estimate of (\ref{eq6-6}) in order to estimate the right-hand side of (\ref{eq6-4}).
\begin{lemma} \label{lem6-7}
Let $A = (e_{i}d_{j})$ be as in Lemma \ref{lem6-5}. Then there exists a positive constant $c$ depending only on
$e$, $E$, $d$, $D$, $\delta$ and $r$, $k$, $m$, $s$ such that
\begin{equation} \label{eq6-8}
\displaystyle{\int\nolimits_{\mathbb{R}}
(1 + |t|) \big| \hat{F}_{B}(t) \hat{U}_{D_{\!A}^{2}}(t) \big|\,dt \leq c(\log n)^{2}\ \ \ \ 
\text{for}\,\, \text{all}\,\,\, B \in N(8,\hat{A})}.
\end{equation}
\end{lemma}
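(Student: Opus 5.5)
We split the integral in (\ref{eq6-8}) into three ranges of $t$: $|t| \leq b_{3}\log n$, $b_{3}\log n \leq |t| \leq b_{4}n^{3/2}$, and $|t| > b_{4}n^{3/2}$. We write $\theta = D_{\!A}^{2}$ and note $\hat{U}_{\theta}(t) = \hat{U}(\theta t)$. Throughout we use the trivial bounds $|\hat{F}_{B}| \leq 1$ and $|\hat{U}| \leq 1$.

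\emph{Low frequencies.} For $|t| \leq b_{3}\log n$ we bound the integrand by $1 + |t|$. This range then contributes at most $2b_{3}\log n + (b_{3}\log n)^{2} \leq c(\log n)^{2}$, which is the source of the factor $(\log n)^{2}$.

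\emph{Middle range.} For $b_{3}\log n \leq |t| \leq b_{4}n^{3/2}$ we apply Lemma \ref{lem6-5} (\ref{eq6-6}), valid uniformly for $B \in N(8,\hat{A})$. The integrand is then at most $(1 + b_{4}n^{3/2})\, b_{1}n^{-b_{2}\log n}$, and the interval has length at most $2b_{4}n^{3/2}$. Hence this range contributes at most $c\, n^{3} n^{-b_{2}\log n}$, which is bounded by a constant depending only on the given parameters, uniformly in $n \geq n_{1}$.

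\emph{High frequencies.} For $|t| > b_{4}n^{3/2}$ only the smoothing factor helps. Substituting $s = \theta t$ gives
\[
\int_{|t| > b_{4}n^{3/2}} (1+|t|)\,|\hat{U}(\theta t)|\,dt \leq \theta^{-1}\int_{|s| > \theta b_{4} n^{3/2}} \bigl(1 + |s|/\theta\bigr)\,|\hat{U}(s)|\,ds .
\]
This needs a lower bound on $\theta$ in terms of $n$. By the Hölder estimate in Section \ref{sec2} with $k=4$,
\[
D_{\!A}^{2} = \sum_{i,j}\hat{a}_{ij}^{4}/n \geq \tfrac{1}{4}\, n^{-1}.
\]
Hence $\theta b_{4}n^{3/2} \geq (b_{4}/4)\,n^{1/2} \to \infty$ and $1/\theta \leq 4n$. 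By (\ref{eq6-1}), $|\hat{U}(s)| \leq c_{N}|s|^{-N}$ for large $|s|$ with $N$ arbitrary. The right-hand side is therefore at most
\[
c\, n^{2}\int_{|s| \geq c' n^{1/2}} (1+|s|)\,|s|^{-N}\,ds \leq c\, n^{2} n^{-(N-2)/2},
\]
which is bounded once we take $N = 6$. For the finitely many small $n$ (where the tail threshold is not yet large), (\ref{eq6-2}) together with $1/\theta \leq 4n$ bounds the integral by a constant.

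Summing the three contributions gives (\ref{eq6-8}). The main point to check is the high-frequency step: we must make sure that $D_{\!A}^{2}$ cannot be too small relative to $n^{-3/2}$. The bound $D_{\!A}^{2} \gtrsim n^{-1}$ leaves a margin of $n^{1/2}$, which is enough given the rapid decay (\ref{eq6-1}) of $\hat{U}$. The uniformity in $B$ comes entirely from Lemma \ref{lem6-5} and the $B$-free bound $|\hat{F}_{B}| \leq 1$.
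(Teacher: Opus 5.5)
Your proposal is correct and follows essentially the same route as the paper. Both split the range at $b_{3}\log n$ and $b_{4}n^{3/2}$, use Lemma \ref{lem6-5} for the middle range, and treat the tail via the substitution $v = \theta t$ with $\theta = D_{\!A}^{2} \geq 1/(4n)$ plus the rapid decay of $\hat{U}$. The only difference is cosmetic: the paper bounds the tail with the moment integral in (\ref{eq6-2}), using $v \leq v^{6}/(c_{2}^{5}n^{5/2})$ for $v \geq c_{2}n^{1/2}$, which avoids your separate treatment of small $n$.
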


\begin{proof}
It sufficies to prove (\ref{eq6-8}) with $|t|$ instead of $1 + |t|$. Using the abbre-
\linebreak
viation $\theta = D_{\!A}^{2}$
and the constants $b_{1}$, $b_{2}$, $b_{3}$ and $b_{4}$ from Lemma \ref{lem6-5} we have
\begin{align*}
\displaystyle{\int\nolimits_{\mathbb{R}} |t|} &\big| \hat{F}_{B}(t) \hat{U}_{\theta}(t) \big|\,dt\\[1.5ex]
&\leq \displaystyle{2\, \biggl\{
\int\nolimits_{0}^{b_{3} \log n} t\,dt +
\int\nolimits_{b_{3} \log n}^{b_{4} n^{3/2}} t\big|\hat{F}_{B}(t)\big|\,dt +
\int\nolimits_{b_{4} n^{3/2}}^{\infty} t\big|\hat{U}_{\theta}(t)\big|\,dt \biggr\}}\\[2ex]
&= 2\, \{ I_{1} + I_{2} + I_{3} \}.
\end{align*}
From Lemma \ref{lem6-5} we obtain $I_{2} \rightarrow 0$ as $n \rightarrow \infty$ 
so that $I_{1} + I_{2} \leq c_{1}(\log n)^{2}$. 
Furthermore, using $\theta \geq 1/(4n)$ and (\ref{eq6-2}) yields
\begin{align*}
I_{3} &= \displaystyle{\dfrac{1}{\theta^2} \int\nolimits_{b_{4} n^{3/2} \theta}^{\infty} v\big|\hat{U}(v)\big|\,dv
\ \ \ \ \ (v = t\theta)}\\[2ex]
&\leq \displaystyle{16n^2 \int\nolimits_{c_{2} n^{1/2}}^{\infty} v\big|\hat{U}(v)\big|\,dv
\ \ \ \ \ \Bigl( c_{2} = \dfrac{b_{4}}{4} \Bigr)}\\[2ex]
&\leq \displaystyle{16n^2 \int\nolimits_{c_{2} n^{1/2}}^{\infty}
\dfrac{v^{6}}{c_{2}^{5} n^{5/2}}\,\big|\hat{U}(v)\big|\,dv}\\[2ex]
&\leq \displaystyle{16 c_{2}^{-5} n^{-1/2} \int\nolimits_{\mathbb{R}} v^{6} \big|\hat{U}(v)\big|\,dv
\rightarrow 0\ \ \ \ \ \text{as}\,\, n \rightarrow \infty.}\qedhere
\end{align*}
\end{proof}
For $n\,{\times}\,n$-matrices $A$ with $n \geq n_{1}$ and $\beta_{\!A} \leq \epsilon_{0}n$ the estimate
(\ref{eq2-16}) now follows from the Lemmas \ref{lem6-3} and \ref{lem6-7}, from Theorem \ref{th2-4} and
\begin{equation*}
D_{\!A}^{2} \leq c n^{- 1\, +\, ((4/k)\, -\, 1)^{+}\,+\, ((4/s)\, -\, 1)^{+}}, 
\end{equation*}
where $c$ depends only on
$e$, $E$, $d$, $D$, $\delta$ and $r$, $k$, $m$, $s$. For the other matrices we obtain (\ref{eq2-16}), if
we choose $\mathscr{K}_{1}$ large enough (see the beginning of Section \ref{sec4}).

\section{Some remarks on the proofs for the second order} \label{sec7}
The proof of Theorem \ref{th2-7} is completely given in \citet{Schneller2025ThesisV2}, Chapter 3, Section 8. 
This proof has a structure which is similar to that of Theorem \ref{th2-4}, but it is much more 
extensive than the proof of Theorem \ref{th2-4}. We mention some essential differences.
\renewcommand{\theenumi}{\arabic{enumi}}  
\renewcommand{\labelenumi}{\theenumi.}
\begin{enumerate}
\item
The set $M_{16}$ has to be defined as a subset of $N^{16}$ so that the maps 
$M_{16} \ni \underline{i} \rightarrow u(\underline{i})$, $t(\underline{i})$ and $v(\underline{i})$, with 
$v(\underline{i})$ as follows, are well defined. The permutation $v(\underline{i})$ leaves
the numbers outside $\{i_{1},\ldots,i_{16}\}$ fixed and maps $i_{1},\ldots,i_{8}$
according to
\[
\begin{array}{@{}l@{\hspace*{2.8ex}}l@{\hspace*{2.8ex}}l@{\hspace*{2.8ex}}l@{}}
i_{1} \rightarrow i_{13}&i_{3} \rightarrow i_{15}&i_{5} \rightarrow i_{9}&i_{7} \rightarrow i_{11}\\[1.5ex]
i_{2} \rightarrow i_{14}&i_{4} \rightarrow i_{16}&i_{6} \rightarrow i_{10}&i_{8} \rightarrow i_{12}.
\end{array}
\]
With these definitions we define analogously five random permutations $\pi_{1},\ldots,\pi_{5}$.
\item
In order to obtain the right expansion in the analogue of equation (\ref{eq3-12}) and for the estimation
of the corresponding remainder terms $R(q_{z})$, $z \in \mathbb{R}$, we need an analogue of Proposition \ref{prop5-7}
for $E\bigl( \mathscr{T}_{\!A}^{2} 1_{(-\infty,\,z]}(\mathscr{T}_{\!A}) \bigr)$ and an Edgeworth
expansion of first order for $E\bigl( \mathscr{T}_{\!A} 1_{(-\infty,\,z]}(\mathscr{T}_{\!A}) \bigr)$.
\item
The condition (\ref{eq2-8}) is used mainly to prove the analogue of Lemma \ref{lem5-2}.
For the rest of the proof one needs only the weaker condition [cf. \citet{BiRob1982}, Lemma]\\[2ex]
(2.8')
$\begin{array}{@{\hspace*{14.5ex}}c@{}}
\textit{there}\,\, \textit{exists}\,\, \textit{a}\,\, \textit{positive}\,\, \textit{constant}\,\, C_{2}'\,\,
\textit{such}\,\, \textit{that}\\[1.5ex]
\big| \Delta_{y}^{3} F_{B}(z) \big| \leq C_{2}'\bigl( E_{\!A}^{3} + y^{3} \bigr)\\[1.5ex]
\textit{for}\,\, \textit{all}\,\, z \in \mathbb{R},\, 0 \leq y \leq E_{\!A}\,\, \textit{and}\,\, B \in N(16, \hat{A}),
\end{array}$\\[1.5ex]
\text{where}
\[
\Delta_{y}^{3} F_{B}(z) = \Delta_{y}^{2} F_{B}(z + y) - \Delta_{y}^{2} F_{B}(z).
\]
\end{enumerate}
Our next remarks concern the proof of Theorem \ref{th2-12}(b). This proof is also very similar to that of 
Theorem \ref{th2-12}(a) and is completely given in \citet{Schneller2025ThesisV2}, Chapter 4, Sections 1 and 3.

The appearance of the factor $n^{\epsilon}$ [instead of e.g., $(\log n)^{3}$] has its reason in establishing 
an estimation analogous to that of the term $I_{3}$ in the proof of Lemma \ref{lem6-7}.
For that we need $b_{4}n^{3/2}\theta \geq c n^{\delta}$ with a $\delta > 0$. Therefore, we have 
to take $\theta = n^{\delta}E_{\!A}^{3} \geq n^{\delta}/(2^{5/2} n^{3/2})$ instead of $\theta = E_{\!A}^{3}$.
For technical reasons $\delta = \epsilon/2$ is selected in \citet{Schneller2025ThesisV2}
[cf. especially Lemma 4.1.7(b) and the proof of (4.3.20)].

\section*{Acknowledgments} \label{sec8}
This paper is based on a part ot the author's thesis written at the
Technische Universit\"at Berlin [\citet{Schneller1987}]. 
The author wishes to thank Professor Dr. E. Bolthausen for
suggesting the problem and for stimulating discussions. Furthermore, the author is indebted to the
Associate Editor and the referees for their various suggestions which improved the original
manuscript of \citet{10.1214/aos/1176347258}. 
Next, the author would like to thank his son David Schneller for finding a misleading formulation 
in \citet{Schneller2025ThesisV2} that would otherwise have been used in this paper as well.

Last but not least, the author would like to express his gratitude to his dear wife, Antonia Schneller, 
for mental support while writing this paper and \citet{Schneller2025ThesisV2} after his retirement.


\begin{thebibliography}{20}

\bibitem[Barbour(1986)]{Barbour1986}
\textsc{Barbour, A. D.} (1986). 
Asymptotic expansions based on smooth functions in the central limit theorem.
\textit{Probab. Theory Related Fields}
\textbf{72} 289--303.
DOI: \href{https://doi.org/10.1007/BF00699108}{10.1007/BF00699108}.

\bibitem[Bickel and Robinson(1982)]{BiRob1982}
\textsc{Bickel, P. J. and Robinson, J.} (1982). 
Edgeworth expansions and smoothness.
\textit{Ann. Probab.}
\textbf{10} 500--503.
DOI: \href{https://doi.org/10.1214/aop/1176993873}{10.1214/aop/1176993873}.

\bibitem[Bickel and van Zwet(1978)]{BivanZw1978}
\textsc{{Bickel}, P. J. and {van Zwet}, W. R.} (1978). 
Asymptotic expansions for the power of distribution-free tests in the two-sample problem.
\textit{Ann. Statist.}
\textbf{6} 937--1004.
DOI: \href{https://doi.org/10.1214/aos/1176344305}{10.1214/aos/1176344305}.

\bibitem[Bolthausen(1984)]{Bolthausen1984}
\textsc{Bolthausen, E.}  (1984). 
An estimate of the remainder in a combinatorial central limit theorem.
\textit{Z. Wahrsch. verw. Gebiete}
\textbf{66} 379--386.
DOI: \href{https://doi.org/10.1007/BF00533704}{10.1007/BF00533704}.

\bibitem[Does(1982)]{Does1982}
\textsc{Does, R. J. M. M.}  (1982). 
Berry-Esseen theorems for simple linear rank statistics under the null-hypothesis.
\textit{Ann. Probab.}
\textbf{10} 982--991.
DOI: \href{https://doi.org/10.1214/aop/1176993719}{10.1214/aop/1176993719}.

\bibitem[Does(1983)]{Does1983}
\textsc{Does, R. J. M. M.}  (1983). 
An Edgeworth expansion for simple linear rank statistics under the null-hypothesis.
\textit{Ann. Statist.}
\textbf{11} 607--624.
DOI: \href{https://doi.org/10.1214/aos/1176346166}{10.1214/aos/1176346166}.

\bibitem[Erickson(1974)]{Erickson1974}
\textsc{Erickson, R. V.}  (1974). 
$L_{1}$ bounds for asymptotic normality of $m$-dependent sums using Stein's technique.
\textit{Ann. Probab.}
\textbf{2} 522--529.
DOI: \href{https://doi.org/10.1214/aop/1176996670}{10.1214/aop/1176996670}.

\bibitem[Feller(1971)]{feller1971introduction}
\textsc{Feller, W.}  (1971). 
\textit{An Introduction to Probability Theory and Its Application \textbf{2}},
2nd ed. Wiley, New York.

\bibitem[H{\'a}jek et al.(1999)]{hajek+sidak+sen1999}
\textsc{H{\'a}jek, J. and \v{S}id{\'a}k, Z. and Sen, P. K.}  (1999). 
\textit{Theory of Rank Tests},
2nd ed. Academic Press, San Diego and London.
DOI: \href{https://doi.org/10.1016/B978-0-12-642350-1.X5017-6}{10.1016/B978-0-12-642350-1.X5017-6}.

\bibitem[Ho and Chen(1978)]{HoChen1978}
\textsc{{Ho}, S. T. and {Chen}, L. H. Y.}  (1978). 
An $L_{p}$ bound for the remainder in a combinatorial central limit theorem.
\textit{Ann. Probab.}
\textbf{6} 231--249.
DOI: \href{https://doi.org/10.1214/aop/1176995570}{10.1214/aop/1176995570}.

\bibitem[Hoeffding(1951)]{Hoeffding1951}
\textsc{Hoeffding, W.}  (1951). 
A combinatorial central limit theorem.
\textit{Ann. Math. Statist.}
\textbf{22} 558--566.
DOI: \href{https://doi.org/10.1214/aoms/1177729545}{10.1214/aoms/1177729545}.

\bibitem[Motoo(1957)]{Motoo1957}
\textsc{Motoo, M.}  (1957). 
On the Hoeffding's combinatorial central limit theorem.
\textit{Ann. Inst. Statist. Math.}
\textbf{8} 145--154.
DOI: \href{https://doi.org/10.1007/BF02863580}{10.1007/BF02863580}.

\bibitem[Robinson(1978)]{Rob1978}
\textsc{Robinson, J.}  (1978). 
An asymptotic expansion for samples from a finite population.
\textit{Ann. Statist.}
\textbf{6} 1005--1011.
DOI: \href{https://doi.org/10.1214/aos/1176344306}{10.1214/aos/1176344306}.

\bibitem[Schneller(1987)]{Schneller1987}
\textsc{Schneller, W.} (1987).
\textit{Edgeworth-Entwicklungen f{\"u}r lineare Rangstatistiken}.
Ph.D. thesis, Technische Universit{\"a}t Berlin.

\bibitem[Schneller(1988)]{Schneller1988}
\textsc{Schneller, W.} (1988).
A short proof of Motoo's combinatorial central limit theorem using Stein's method.
\textit{Probab. Theory Related Fields}
\textbf{78} 249--252.
DOI: \href{https://doi.org/10.1007/BF00322021}{10.1007/BF00322021}.

\bibitem[Schneller(1989)]{10.1214/aos/1176347258}
\textsc{Schneller, W.} (1989).
Edgeworth expansions for linear rank statistics.
\textit{Ann. Statist.}
\textbf{17} 1103--1123.
DOI: \href{https://doi.org/10.1214/aos/1176347258}{10.1214/aos/1176347258}.

\bibitem[Schneller(2025)]{Schneller2025ThesisV2}
\textsc{Schneller, W.} (2025).
\textit{Edgeworth Expansions for Linear Rank Statistics Using Stein's Method}
[English translation of an updated version of the author's Ph.D. thesis from 1987].
Available at \href{http://arxiv.org/abs/2511.12187}{arXiv:2511.12187}.

\bibitem[Schneller(2026)]{10.1214/25-AOS2611}
\textsc{Schneller, W.} (2026).
Erratum: Edgeworth expansions for linear rank statistics.
\textit{Ann. Statist.}
\textbf{54} 1649--1653.
DOI: \href{https://doi.org/10.1214/25-AOS2611}{10.1214/25-AOS2611}.

\bibitem[Schneller(2026a)]{10.1214/25-AOS2611SUPPA}
\textsc{Schneller, W.} (2026a).
Supplement A to {''Erratum: Edgeworth expansions for linear rank statistics.''}
DOI: \href{https://doi.org/10.1214/25-AOS2611SUPPA}{10.1214/25-AOS2611SUPPA}.

\bibitem[Schneller(2026b)]{10.1214/25-AOS2611SUPPB}
\textsc{Schneller, W.} (2026b).
Supplement B to {''Erratum: Edgeworth expansions for linear rank statistics.''}
DOI: \href{https://doi.org/10.1214/25-AOS2611SUPPB}{10.1214/25-AOS2611SUPPB}.

\bibitem[Stein(1972)]{stein1972bound}
\textsc{Stein, C.}  (1972). 
A bound for the error in the normal approximation to the distribution of a sum of dependent random variables.
\textit{Proc. Sixth Berkeley Symp. Math. Statist. Probab.}
\textbf{2} 583--602.
Univ. California Press
URL: \href{https://projecteuclid.org/ebooks/berkeley-symposium-on-mathematical-statistics-and-probability/Proceedings-of-the-Sixth-Berkeley-Symposium-on-Mathematical-Statistics-and/chapter/A-bound-for-the-error-in-the-normal-approximation-to/bsmsp/1200514239}
{https://projecteuclid.org/ebooks/berkeley-symposium-on-mathematical-statistics-and-probability/Proceedings-of-the-Sixth-Berkeley-Symposium-on-Mathematical-Statistics-and/chapter/A-bound-for-the-error-in-the-normal-approximation-to/bsmsp/1200514239}.

\bibitem[van Zwet(1982)]{vanZwet1982}
\textsc{{van Zwet}, W. R.}  (1982). 
On the Edgeworth expansion for the simple linear rank statistic.
In \textit{Colloquia Mathematica Societatis J{\'a}nos Bolyai} (B. V. Gnedenko, M. L. Puri and I. Vincze, eds.)
\textbf{32} 889--909.
North-Holland, Amsterdam.
URL [Preprint]: \href{https://ir.cwi.nl/pub/8045}{https://ir.cwi.nl/pub/8045}.

\bibitem[von Bahr(1967)]{vonBahr1967}
\textsc{{von Bahr}, B.}  (1967). 
Multi-dimensional integral limit theorems.
\textit{Ark. Mat.}
\textbf{7} 71--88.
DOI: \href{https://doi.org/10.1007/BF02591678}{10.1007/BF02591678}.

\end{thebibliography}
\end{document}